\newcommand{\CC}{\mathbb{C}}
\newcommand{\NN}{\mathbb{N}}
\newcommand{\RR}{\mathbb{R}}
\newcommand{\ZZ}{\mathbb{Z}}
\DeclareMathAlphabet{\mathpzc}{OT1}{pzc}{m}{it}
\newcommand{\cC}{\mathcal{C}}
\newcommand{\cE}{\mathcal{E}}
\newcommand{\cF}{\mathcal{F}}
\newcommand{\cG}{\mathcal{G}}
\newcommand{\cK}{\mathcal{K}}
\newcommand{\cL}{\mathcal{L}}
\newcommand{\cV}{\mathcal{V}}
\newcommand{\cW}{\mathcal{W}}
\newcommand{\pd}{\partial}
\newcommand{\xx}{\mathbf{x}}
\newcommand{\yy}{\mathbf{y}}
\newcommand{\uu}{\mathbf{u}}
\newcommand{\nn}{\mathbf{n}}
\newcommand{\rr}{\mathbf{r}}
\newcommand{\dd}{\mathbf{d}}
\newcommand{\kk}{\mathbf{k}}
\newcommand{\ii}{\mathbf{i}}
\renewcommand{\d}{\mathrm{d}}
\journalname{Numerische Mathematik}
\begin{document}

\title{Corrected Trapezoidal Rules for Boundary Integral Equations in Three Dimensions
}


\author{Bowei Wu        \and
        Per-Gunnar Martinsson 
}


\institute{B. Wu \at
              Oden Institute for Computational Engineering and Sciences, University of Texas at Austin, Austin, TX 78712, USA\\
              \email{boweiwu@utexas.edu}           
           \and
           P.G. Martinsson \at
           Department of Mathematics, University of Texas at Austin, Austin, TX 78712, USA\\
              \email{pgm@oden.utexas.edu}       
}

\date{Received: date / Accepted: date}

\maketitle

\begin{abstract}
The manuscript describes a quadrature rule that is designed for the high order discretization of boundary integral equations (BIEs) using the Nystr\"{o}m method. 
The technique is designed for surfaces that can naturally be parameterized using a uniform grid on a rectangle, such as deformed tori, or channels with periodic boundary conditions.
When a BIE on such a geometry is discretized using the Nystr\"{o}m method based on the Trapezoidal quadrature rule, the resulting scheme tends to converge only slowly, due to the singularity in the kernel function.
The key finding of the manuscript is that the convergence order can be greatly improved by modifying only a very small number of elements in the coefficient matrix.
Specifically, it is demonstrated that by correcting only the diagonal entries in the coefficient matrix, $O(h^{3})$  convergence can be attained for the single and double layer potentials associated with both the Laplace and the Helmholtz kernels. 
A nine-point correction stencil leads to an $O(h^5)$ scheme.
The method proposed can be viewed as a generalization of the quadrature rule of Duan and Rokhlin, which was designed for the 2D Lippmann-Schwinger equation in the plane.
The techniques proposed are supported by a rigorous error analysis that relies on Wigner-type limits involving the Epstein zeta function and its parametric derivatives.
\keywords{Wigner limit \and Epstein zeta function \and Singular quadrature \and Boundary integral equations}
\subclass{65R20 \and 65D32 \and 45B05} 
\end{abstract}

\section{Introduction}

\subsection{Nystr\"om discretization and weakly singular kernels}
Boundary value problems associated with linear elliptic partial differential equations, such as the Laplace and Helmholtz equations, are often reformulated as boundary integral equations (BIE) of the second kind. For concreteness, let us consider a BIE of the form
\begin{equation}
\label{eq:BIE}
\sigma(\xx) + \int_\Gamma \cK(\xx,\yy) \sigma(\yy)\d S(\yy) = f(\xx),\quad \xx\in\Gamma
\end{equation}
where $\Gamma\subset\RR^3$ is a smooth surface, where $\d S$ is the surface element, and where the kernel $\cK\in L^2(\Gamma\times\Gamma)$ is weakly singular as $|\xx-\yy|\rightarrow 0$. We focus on the situation where the given data function $f$ is smooth, in which case the solution $\sigma$ is smooth as well. 

We discretize (\ref{eq:BIE}) using a Nystr\"om discretization \cite[\S12.2]{kress2014linear} based on a quadrature rule with nodes $\{\xx_{i}\}_{i=1}^{N} \subset \Gamma$ and weights $\{w_{i}\}_{i=1}^{N}$ for which the approximation
\begin{equation}
\label{eq:smoothquad}    
\int_{\Gamma}\varphi(\xx)\d S(\xx) \approx \sum_{i=1}^{N} w_{i}\,\varphi(\xx_{i})
\end{equation}
is accurate for \textit{smooth} functions $\varphi$. In the Nystr\"om method, we first collocate (\ref{eq:BIE}) to the quadrature nodes $\{\xx_{i}\}$, and then replace the continuum integral by a quadrature supported on the same nodes, to obtain the linear system
\begin{equation}
\label{eq:nystromlinsys}
\sigma(\xx_i) + \sum_{j=1}^N \mathbf{K}(i,j)\,\sigma(\xx_j) = f(\xx_i),\qquad  i = 1,\,2,\,\dots,\,N.
\end{equation}
For this to work, we need to build an $N\times N$ coefficient matrix $\mathbf{K}$ such that
\begin{equation}
\label{eq:nystromquad}    
\sum_{j=1}^{N}\mathbf{K}(i,j)\,\sigma(\xx_{j})\approx 
\int_{\Gamma}\cK(\xx_{i},\,\yy)\sigma(\yy)\,\d S(\yy) ,
\qquad i = 1,\,2,\,\dots,\,N
\end{equation}
holds to high accuracy. If the kernel $\cK$ were smooth, then this task would be easy since we could then use the standard quadrature rule  (\ref{eq:smoothquad}) and simply set
\begin{equation}
\label{eq:niceK}    
\mathbf{K}(i,j) = \cK(\xx_{i},\xx_{j})\,w_{j}.
\end{equation}
The challenging question in this context is how to find a matrix $\mathbf{K}$ for which the approximation (\ref{eq:nystromquad}) holds to high accuracy despite the singularity in the kernel. In this manuscript, we address it for the specific case where the surface $\Gamma$ is parameterized over a rectangle $R \subset \mathbb{R}^{2}$, the nodes $\{\xx_{i}\}$ are the images of a uniform grid on $R$, and the base quadrature (\ref{eq:smoothquad}) is the Trapezoidal rule on $R$.

A particular benefit of the method that we present is that the basic relation (\ref{eq:niceK}) holds for \textit{almost} all entries of $\mathbf{K}$. This makes the technique proposed attractive to use in combination with fast solvers such as those based on the Fast Multipole Method \cite{rokhlin1987} or fast direct solvers \cite{2019_martinsson_book}.

\subsection{Prior work}
Numerical evaluation of boundary integral operators, or integration of singular functions in general, is a rich topic with a long history, see for example \cite{atkinson1997numerical,davis2007methods,kress2014linear} for more detailed reviews. Broadly speaking, singular quadrature
techniques can be classified into three categories:
\begin{enumerate}
\item \emph{Singularity cancellation methods} typically apply a change of integration variables (e.g., from Cartesian to polar coordinates) such that the singularity is (fully or partially) cancelled by the Jacobian, then the resulting smoother integrand is integrated using a regular quadrature. Examples in this category include \cite{duffy1982quadrature,atkinson2004quadrature,bruno2001fast,bremer2012nystrom}. A related method is the singularity regularization approach, e.g. \cite{beale2001convergent}, where the singularity is ``smoothed out'' locally in a sophisticated way such that, when integrated using a regular quadrature, the regularization error is balanced with the discretization error.
\item \emph{Singularity subtraction methods} proceed by first subtracting the singular component from the integrand and integrating the remaining smooth component with a regular quadrature, then the singular component is integrated analytically and added back to the final result. Examples in this category include \cite{merkel1986integral,helsing2008evaluation,helsing2013higher}. 
\item \emph{Singularity correction methods} also split the integrand into regular and singular components; while the regular component is handled by the underlying regular quadrature, the singular component is now integrated numerically (instead of analytically) using a specially designed quadrature. The outcome of this approach is a \emph{modified} quadrature, where the original regular quadrature weights are modified to accommodate the singularity. For example, extrapolation methods such as \cite{haroldsen1998numerical} or product quadrature methods such as \cite{kress1991boundary} modify weights \emph{globally}.
\end{enumerate}

Our focus in this manuscript is on uniform discretizations in parameter space, and quadratures based on the classical Trapezoidal rule. For smooth integrands, the trapezoidal rule converges super algebraically fast, and the existing error analysis is very precise \cite{trefethen2014exponentially}. We observe that in the context of BIEs on toroidal domains, the integrand is periodic, so the only loss of smoothness is due to the singularity in the kernel. (Corrections for edge effects are described in \cite{alpert1995high}.)

In order to analyze the behavior of the Trapezoidal rule for functions with isolated singularities, it is common to introduce a ``punctured trapezoidal rule'' that omits the singular point. By itself, such a rule would be of low accuracy, but a number of strategies for improving its convergence speed have been developed.
For example, when the integration domain coincides with the Euclidean space ($\Gamma=\RR^n$), high-order and robust corrected trapezoidal rules are developed for $\log|x|$ and $|x|^{-s}, -1<s<1$, in $\RR^1$ \cite{kapur1997high}, for $\log|\xx|, |\xx|^{-1},$ and $H_0^{(1)}(\kappa|\xx|)$ in $\RR^2$ \cite{aguilar2002high,marin2014corrected,duan2009high}, and for $|\xx|^{-1}$ in $\RR^3$ \cite{aguilar2005high}. When the integration domain is on a curved surface ($\Gamma\subsetneq\RR^n$), several powerful techniques exist for the case $n=2$. For example, Alpert in \cite{alpert1999hybrid} developed a hybrid Gaussian-trapezoidal quadrature that introduced correction weights on a local auxiliary grid that is unevenly-spaced; for on-grid corrections, Kapur and Rokhlin in \cite{kapur1997high} have developed quadratures for a variety of singularities that are not directly elementary functions, the correction weights of these quadratures grow rapidly as the order of correction increases, hence are less stable when the correction order is six or higher, see \cite{hao2014high}; Alpert alleviated this issue by using extra correction weights and minimizing their sum of squares \cite{alpert1995high}. Most of these quadratures are designed based on the error analysis of the punctured trapezoidal rule.
We are not aware of any analogous techniques that work on curved surfaces in $\RR^3$.

\subsection{Contributions of the present work}

We present a systematic approach to constructing locally corrected trapezoidal rules for the Laplace and Helmholtz layer potentials on smooth surfaces in $\RR^3$. The key innovation of our method is that we connect the error analysis of the punctured trapezoidal rule to lattice sum theory and the Epstein zeta function (a generalization of the Riemann zeta function), and based on them develop fast algorithms that efficiently compute the limiting error coefficients as the grid spacing $h\to0$. From there, local correction weights are calculated based on the idea of moment fitting \cite{wissmann1986partially}. We prove the convergence of the correction weights in Theorem \ref{thm:converged_wigner-type} and justify their use for the intermediate $h$ grid by Theorem \ref{thm:high-order-with-converged-wigner}. Our method generalizes straightforwardly to other on-surface integral operators such as the Stokes potentials, which share the same $|\xx|^{-1}$ type singularity as the Laplace potentials.

We mention that the connection between the error of the punctured trapezoidal rule and the zeta function was first mentioned by Marin et.\ al.\ in \cite[Lemma A2]{marin2014corrected} for the $|x|^{-s}$ function, $0<s<1$, on $\RR^1$, which enabled a complete error analysis of their 1D quadrature. 
In this manuscript, we have generalized this zeta function connection to higher dimensions and consequently complete the convergence analysis of \cite{marin2014corrected} for their 2D quadrature. 
In addition, as is pointed out in \cite{marin2014corrected}, many existing locally corrected trapezoidal methods in $\RR^2$ and $\RR^3$ are derived heuristically and lack complete convergence analysis, which in turn is due to the lack of expressions for the converged correction weights (as $h\to0$). The generalized zeta function connection shown in this manuscript provides a promising tool to overcome these difficulties and serves as an analytical foundation for many related singular trapezoidal methods.

\subsection{Organization}
Section \ref{sc:slp_wigner} introduces the concept of a Wigner limit, and shows how it is connected to the trapezoidal-rule discretization of the Laplace single-layer potential. 
Section \ref{sc:eval_epstein_zeta} presents an efficient algorithm to compute the correction weights for the trapezoidal rule based on the Epstein zeta function.
Section \ref{sc:dlp_epstein_der} further extends the method by making the connection between the parametric derivatives of the Epstein zeta function and the correction weights associated with the Laplace double-layer potential. Higher-order corrected trapezoidal rules are developed and analyzed in Section \ref{sc:high-order_quad}; in particular, Section \ref{sc:high-order:converged_Wigner-type} presents the complete analysis for the zeta function connection. Numerical results are presented in Section \ref{sc:results}, where we implement the $O(h^5)$ corrected trapezoidal rules and demonstrate the scalability of our method by combining the corrected quadratures with the Fast Multipole Method (FMM) to solve the Laplace and Helmholtz boundary value problems. Finally, we draw conclusions and point out future directions.

\section{The Wigner limit and correction of the single-layer potential}
\label{sc:slp_wigner}

In this section, we define the Wigner limit and demonstrate its connection to the convergence analysis for the punctured trapezoidal rule when applied to a single layer potential on a curved surface $\Gamma$. We assume that $\Gamma$ is smooth, and that it is locally parameterized by $\rr: (u,v)\mapsto \rr(u,v)\in\Gamma$, for $u,v \in [-a,a]$. Without loss of accuracy, we assume that $\rr(0,0) = \mathbf{0}$. We set $r=|\rr|$, and let $\rr_u(u,v)$ and $\rr_v(u,v)$ denote the tangent vectors at $(u,v)$. $J(u,v)=|\rr_u\times\rr_v|$ is the Jacobian. 

Let $\sigma$ denote a smooth function on $\Gamma$. To keep the notation uncluttered, we view $\sigma$ as a function over $[-a,a]^{2}$ so that $\sigma = \sigma(u,v)$. Now consider the following punctured trapezoidal approximation of the Laplace single-layer potential (SLP) on an $h$-grid and with center correction
\begin{equation}
\begin{aligned}
&\int_{-a}^a\int_{-a}^a\frac{1}{4\pi r}\sigma(u,v)J(u,v)\,\d u\d v  \\
&\approx  \frac{1}{4\pi}\sum_{i=-N}^{N}\sideset{}'\sum_{j=-N}^{N}\frac{\sigma(ih,jh)J(ih,jh)}{r(ih,jh)} + \frac{\sigma(0,0)J(0,0)}{4\pi}\tau_{0,0}
\end{aligned}
\label{eq:lap_corr}
\end{equation}
where $a = (N+\frac{1}{2})h$ and the prime $'$ indicates that $(i,j)\neq(0,0)$. We assume that $\sigma$ is compactly supported in the integration domain to avoid the discussion of boundary corrections for the trapezoidal rule. (In practical applications, $\sigma$ will typically be periodic, rather than compactly supported, but this makes no difference in the analysis.) Our goal is to determine the correction weight $\tau_{0,0}$ such that the  corrected trapezoidal rule is $O(h^3)$ accurate whenever $\sigma$ and $\Gamma$ are sufficiently smooth.

One can show that $\tau_{0,0} = \cC_0\cdot h$, where $\cC_0$ is a constant that depends on the singularity $1/r$ and the local geometry (we defer the analysis to Section \ref{sc:high-order_quad}). To determine $\cC_0$, we consider an approximation
that is valid for $u$ and $v$ small of the form
\begin{equation}
\frac{1}{r}\approx\frac{1}{\sqrt{Q_A(u,v)}}
\end{equation}
where
\begin{equation}
Q_A(u,v)=Eu^2+2Fuv+Gv^2,\quad A := \begin{bmatrix} E & F\\ F & G\end{bmatrix}
\end{equation}
is the \emph{first fundamental form} at $(0,0)$ with coefficients
$$E=\rr_u(0,0)\cdot\rr_u(0,0),\quad F=\rr_u(0,0)\cdot\rr_v(0,0),\quad G=\rr_v(0,0)\cdot\rr_v(0,0).$$
Substituting this approximation into \eqref{eq:lap_corr}, with the smooth part $\sigma(u,v)J(u,v)\equiv1$ and changing integration variables to $u\mapsto h\,u, v\mapsto h\,v$, gives
\begin{equation}
\begin{aligned}
\tau_{0,0} \approx (-W_A^{(N)}(1))\cdot h
\end{aligned}
\end{equation}
where
\begin{align}
W_A^{(N)}(s) &:= Z_A^{(N)}(s) - I_A^{(N)}(s) \label{eq:wigner_N} \\
Z_A^{(N)}(s) &:= \sum_{i=-N}^{N}\sideset{}'\sum_{j=-N}^{N} Q_{A}(i,j)^{-s/2}\\
I_A^{(N)}(s) &:= \int_{-N-\frac{1}{2}}^{N+\frac{1}{2}}\int_{-N-\frac{1}{2}}^{N+\frac{1}{2}}Q_{A}(u,v)^{-s/2}\d u\d v
\end{align}
It is expected that
\begin{equation}
    \tau_{0,0} = -h\cdot \lim_{N\to\infty}W_A^{(N)}(1) =: (-W_A(1))\cdot h.
\end{equation}

The limit $W_A(s)$ is called the \emph{Wigner limit} which appears in the computation of electron sums in chemistry (see \cite[Sec. 7.1]{borwein2013lattice} and \cite{borwein2014lattice}). 

\begin{theorem}
\label{thm:wigner_lim}
For any positive definite quadratic form $Q_A$, the Wigner limit $W_A(s)$ exists in the strip $0 < \mathrm{Re}\, s < 2$ and coincides therein with the analytic continuation of $Z_A(s) :=\lim_{N\to\infty}Z_A^{(N)}(s)$. In particular, $W_A(1) = Z_A(1)$.
\end{theorem}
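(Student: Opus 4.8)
The plan is to compare, for each fixed $N$, the punctured lattice sum $Z_A^{(N)}(s)$ with the integral $I_A^{(N)}(s)$ one unit cell at a time, and then to let $N\to\infty$. First I would record a closed form for $I_A^{(N)}(s)$: writing the positive definite matrix as $A=B^{\top}B$ with $\det B=\sqrt{\det A}$, the substitution $(x,y)^{\top}=B(u,v)^{\top}$ turns $Q_A(u,v)$ into $x^2+y^2$ and the square $[-N-\tfrac12,\,N+\tfrac12]^2$ into $(N+\tfrac12)$ times a fixed parallelogram $P$ centred at the origin, and polar coordinates then give, for $0<\mathrm{Re}\,s<2$,
\begin{equation*}
I_A^{(N)}(s)=\frac{c_A(s)}{2-s}\Bigl(N+\tfrac12\Bigr)^{2-s},\qquad
c_A(s):=\frac{1}{\sqrt{\det A}}\int_0^{2\pi}R(\phi)^{2-s}\,\d\phi,
\end{equation*}
where $R(\phi)>0$ is the radial function of $P$. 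Since $R$ is bounded above and below by positive constants, $c_A$ is entire, so the right-hand side extends $I_A^{(N)}(s)$ to a function that is meromorphic on $\CC$ with its only singularity a simple pole at $s=2$. I write $\widehat W_A^{(N)}(s):=Z_A^{(N)}(s)-\tfrac{c_A(s)}{2-s}(N+\tfrac12)^{2-s}$ for this meromorphic extension of $W_A^{(N)}$, and note that its residue at $s=2$ equals $c_A(2)=2\pi/\sqrt{\det A}$, independently of $N$.

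The key geometric observation is that the lattice points $\{n\in\ZZ^2:\|n\|_\infty\le N,\ n\neq0\}$ are precisely the centres of the unit cells tiling $[-N-\tfrac12,\,N+\tfrac12]^2$, with the central cell removed. Hence $\widehat W_A^{(N+1)}(s)-\widehat W_A^{(N)}(s)$ is the midpoint-rule quadrature error of $Q_A(\cdot)^{-s/2}$ over the ``frame'' of the $8(N+1)$ unit cells added in passing from the $N$-square to the $(N+1)$-square --- an identity valid first for $0<\mathrm{Re}\,s<2$, where every piece is a genuine integral, and then for all $s$ by analytic continuation, the frame lying at distance $\gtrsim N$ from the origin so that each term involved is entire in $s$. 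A one-term Taylor expansion on each cell (the linear term integrating to zero by symmetry) bounds the error there by a constant times $\sup_{C_n}\|\nabla^2 Q_A(\cdot)^{-s/2}\|\lesssim |s|^2\,\|n\|^{-\mathrm{Re}\,s-2}$ (the implied constant depending on $A$), so the whole frame contributes $O\bigl(|s|^2 N^{-\mathrm{Re}\,s-1}\bigr)$, locally uniformly for $s$ in compact subsets of $\{\mathrm{Re}\,s>0\}$. Because $\sum_N N^{-\mathrm{Re}\,s-1}$ converges exactly when $\mathrm{Re}\,s>0$, and because the $s=2$ residues cancel in the telescoped differences, $\widehat W_A^{(N)}(s)$ converges locally uniformly on $\{\mathrm{Re}\,s>0\}\setminus\{2\}$ to a holomorphic function $H(s)$ with a simple pole of residue $2\pi/\sqrt{\det A}$ at $s=2$. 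Restricting to the strip $0<\mathrm{Re}\,s<2$, where $\widehat W_A^{(N)}=W_A^{(N)}$, this already establishes the existence of the Wigner limit, with $W_A(s)=H(s)$.

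To finish, I would identify $H$. For $\mathrm{Re}\,s>2$ the Epstein series converges, so $Z_A^{(N)}(s)\to Z_A(s)$, while $(N+\tfrac12)^{2-s}\to0$; hence $H(s)=Z_A(s)$ on $\{\mathrm{Re}\,s>2\}$. Since $H$ is holomorphic on the connected set $\{\mathrm{Re}\,s>0\}\setminus\{2\}$ and agrees with the Epstein series on the open subset $\mathrm{Re}\,s>2$, it \emph{is} the analytic continuation of $Z_A$ to that set (so, as a by-product, the classical meromorphic continuation of the Epstein zeta function, with its single simple pole at $s=2$, falls out as well). In particular $W_A(s)=H(s)=Z_A(s)$ throughout $0<\mathrm{Re}\,s<2$, and setting $s=1$ gives $W_A(1)=Z_A(1)$.

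The main obstacle is the uniform-in-$s$ control of the far-field quadrature error, together with the bookkeeping that makes the $s=2$ pole disappear from the telescoped differences; the change of variables, the polar integral, the Taylor remainder estimate, and the concluding appeal to the identity theorem are all routine. A secondary subtlety to keep in mind is that the closed form for $I_A^{(N)}(s)$ represents a genuine integral only when $\mathrm{Re}\,s<2$, so that the identification $H=W_A$ is legitimate exactly on the strip, whereas the values for $\mathrm{Re}\,s>2$ must be read off the (meromorphically continued) closed-form expression rather than from the integral itself.
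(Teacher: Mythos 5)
Your proposal is correct, and it is a genuinely different route from the paper's: the paper does not prove Theorem \ref{thm:wigner_lim} at all, but cites \cite[Theorem 1]{borwein1989analysis}, and the closest in-paper machinery (the Poisson-summation/smooth-cutoff analysis of Appendix \ref{sc:app:proof}, Theorem \ref{thm:wigner-type_converge_rate_1}) addresses the $h$-dependent Wigner-type limits and in fact \emph{uses} Theorem \ref{thm:wigner_lim} to identify the limiting value, rather than reproving it. Your argument is an elementary, self-contained alternative: the closed form $I_A^{(N)}(s)=\frac{c_A(s)}{2-s}(N+\frac12)^{2-s}$ via $A=B^{\top}B$ and the radial function of the parallelogram $B([-1,1]^2)$, the observation that $\widehat W_A^{(N+1)}-\widehat W_A^{(N)}$ is exactly the midpoint-rule error over the frame of $8(N+1)$ unit cells (with the $N$-independent $s=2$ residues cancelling), the symmetric Taylor bound $O\bigl((1+|s|)^2N^{-\mathrm{Re}\,s-1}\bigr)$ per frame, and the identity theorem to pin $H$ down as the continuation of the Epstein series from $\mathrm{Re}\,s>2$. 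What this buys is a proof that could make the paper self-contained, and as a by-product it delivers the meromorphic continuation of $Z_A$ to $\mathrm{Re}\,s>0$ together with the residue $2\pi/\sqrt{\det A}$ at $s=2$; its limitation is that the second-order midpoint error only reaches the half-plane $\mathrm{Re}\,s>0$ (enough for the strip and for $s=1$), whereas the theta/incomplete-gamma representation \eqref{eq:epstein_anal} that the paper relies on continues $Z_A(s)$ to all of $\CC$ and simultaneously yields the fast evaluation formula \eqref{eq:epstein_anal_s1}. The only polishing needed in your write-up is cosmetic: the Hessian bound should carry a factor $(1+|s|)^2$ rather than $|s|^2$ (immaterial on compact $s$-sets), and one should note explicitly that $R(\phi)$ is continuous and bounded between positive constants so that $c_A(s)$ is entire, and that $u\mapsto Q_A(u)^{-s/2}$ is smooth on the frame cells for every $s$, which is what legitimizes the analytic continuation of the per-frame identity past the strip.
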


Theorem \ref{thm:wigner_lim} was proved in \cite[Theorem 1]{borwein1989analysis}. Notice that the limits of $Z_A^{(N)}(s)$ and $I_A^{(N)}(s)$ only exist when $\mathrm{Re}\,s>2$, and it is remarkable that the integrals $I_N(s)$ play no role in the Wigner limit when $\mathrm{Re}\,s<2$. Consequently, the constant $\cC_0$ is now given by
\begin{equation}
\cC_0 = -W_A(1) \equiv -Z_A(1),
\end{equation}
so it reduces to finding the analytic continuation of $Z_A(s)$ at $s=1$.

\begin{remark}[Epstein zeta function]
The (analytically continued) function $Z_A(s)$ is the two-dimensional \emph{Epstein zeta function} \cite{epstein1903theorie,epstein1906theorie}. It is analytic everywhere in $\CC$ except a simple pole at $s = 2$. This is a generalization of the one-dimensional Riemann zeta function, $\zeta(s)$, which has a simple pole at $s=1$.
\end{remark}

\begin{remark}
For certain quadratic forms $Q_A$, the limits $Z_A(s)$ are known exactly (e.g., see \cite[Table 1.6]{borwein2013lattice}). For examples:
\begin{itemize}
\item when $E=G=1$ and $F=0$, i.e., $A=I$,
\begin{equation}
Z_I(s) = \sideset{}'\sum_{i,j}\frac{1}{(i^2+j^2)^{s/2}} = 4\,\zeta\left(\frac{s}{2}\right)\beta\left(\frac{s}{2}\right)
\end{equation}
where $\beta(s)=L_{-4}(s)$ is the Dirichlet Beta function (or a Dirichlet-L function modulo $4$). Consequently, the correction weight for the Laplace SLP at a point parameterized by a locally conformal map ($E=G$ and $F=0$) is 
$$\tau_{0,0} = \frac{-4\zeta\left(\frac{1}{2}\right)\beta\left(\frac{1}{2}\right)}{\sqrt{E}}h\approx 3.900264920001956\frac{h}{\sqrt{E}}.$$
The number $3.90026\dots$ has appeared in \cite[Table 2]{marin2014corrected} as expected.
\item when $E = 1, F = \frac{1}{2}, G = 1$,
\begin{equation}
Z_A(s) = \sideset{}'\sum_{i,j}\frac{1}{(i^2+ij+j^2)^{s/2}} = 6\,\zeta\left(\frac{s}{2}\right)L_{-3}\left(\frac{s}{2}\right)
\end{equation}
where $L_{-3}(s)=1-2^{-s}+4^{-s}-5^{-s}+7^{-s}+\dots$ is a Dirichlet L-function modulo $3$. Consequently, the correction weight for $E = G = 2F$ (i.e. $\rr_u$ and $\rr_v$ form a $\frac{\pi}{3}$ angle) is
$$\tau_{0,0} = \frac{-6\zeta\left(\frac{1}{2}\right)L_{-3}\left(\frac{1}{2}\right)}{\sqrt{E}}h\approx 4.213422636136907 \frac{h}{\sqrt{E}}.$$
\end{itemize}
\end{remark}

\begin{remark}[Helmholtz SLP]\label{rmk:helmholtz_slp}
The correction weight for the Helmholtz SLP with wavenumber $\kappa$, denoted $\tau_{0,0}^\kappa$, is related to the Laplace weight $\tau_{0,0}$ by
\begin{equation}
\tau_{0,0}^\kappa = \tau_{0,0}+i\kappa\cdot h^2 = (- Z_A(1)+i\kappa h)\cdot h
\end{equation}
This connection can be shown by expanding the Helmholtz SLP kernel in $r$,
\begin{equation}
\frac{e^{i\kappa r}}{r} = \frac{1}{r}+ i\kappa - \frac{\kappa^2}{2}r + O(r^2),
\end{equation}
such that $\tau_{0,0}$ handles the $\frac{1}{r}$ singularity and the weight $i\kappa\cdot h^2$ handles the constant term $i\kappa$.
\end{remark}

\begin{remark}[different grid spacings]
When the grid spacings are $h_1$ in the $u$-direction and $h_2$ in the $v$-direction, a similar derivation gives the Laplace SLP correction weight
\begin{equation}
\tau_{0,0} = (-Z_{\tilde{A}}(1))\cdot \sqrt{h_1h_2},\quad \tilde A :=\begin{bmatrix}E\cdot\frac{h_1}{h_2} & F\\ F & G\cdot\frac{h_2}{h_1}\end{bmatrix},
\end{equation}
and the corresponding Helmholtz correction weight becomes
\begin{equation}
\tau_{0,0}^\kappa = (-Z_{\tilde{A}}(1)+i\kappa \sqrt{h_1h_2})\cdot \sqrt{h_1h_2}.
\end{equation}
\end{remark}

\section{Evaluation of the 2D Epstein zeta function \texorpdfstring{$Z_A(s)$}{TEXT}}
\label{sc:eval_epstein_zeta}

Computing the Wigner limit $W_A(s)$ using the definition \eqref{eq:wigner_N} as $N\to\infty$ would result in cancellation errors since both $Z_A^{(N)}(s)$ and $I_A^{(N)}(s)$ are divergent for $s<2$. Fortunately, Theorem \ref{thm:wigner_lim} allows us to avoid the divergent quantities by directly evaluating the Epstein zeta function $Z_A(s)$ so that $W_A(s)=Z_A(s)$.

A fast algorithm to evaluate the Epstein zeta function in general is proposed by Crandall in the note \cite{crandall1998fast}. In this section, we describe a simplified algorithm tailored to our interests, namely, in two dimensions and for the particular value at $s=1$. The full formula for any $s$ is provided in Appendix \ref{sc:app:epstein_higher_deriv}.

The 2D Epstein zeta function for $\mathrm{Re}\,s>2$ is defined as
\begin{equation}
Z_A(s) = \sideset{}{'}\sum_{i,j}Q_A(i,j)^{-s/2} = \sideset{}{'}\sum_{i,j}\frac{1}{(Ei^2+2Fij+Gj^2)^{s/2}}
\label{eq:epstein_zeta_summation_form}
\end{equation}
for any positive definite quadratic form $Q_A$. The analytic continuation of $Z_A(s)$ to the whole complex plane (except a simple pole at $s=2$) is given by the following integral representation \cite[Eq.(1.2.8),(1.2.11)]{borwein2013lattice}:
\begin{equation}
\begin{aligned}
\pi^{-\frac{s}{2}}\Gamma\left(\frac{s}{2}\right)Z_A(s) = \frac{2}{(s-2)\sqrt{D}} - \frac{2}{s} + \int_1^\infty\mathrm{d}t\cdot t^{s/2-1}\sum_{i,j}{}'e^{-\pi Q_A(i,j) t}\\
+\frac{1}{\sqrt{D}}\int_1^\infty\mathrm{d}t\cdot t^{(2-s)/2-1}\sum_{i,j}{}'e^{-\pi \overline{Q_A}(i,j) t}
\end{aligned}
\label{eq:epstein_anal}
\end{equation}
where $D := \det A = EG-F^2$ and $\overline{Q_A}(i,j) := Q_{A^{-1}}(i,j) =  (Gi^2-2Fij+Ej^2)/D$ is the quadratic form associated with $A^{-1}$; when $s=0$, $Z_A(0) = \lim_{s\to0} Z_A(s) = -1$ for any $Q_A$.

Before using \eqref{eq:epstein_anal} to calculate $Z_A(1)$, we make the following simplifications:
\begin{enumerate}[1)]
\item Assume that the determinant $D=1$. Otherwise one can first scale $E,F,G$ by $1/\sqrt{D}$ to obtain a quadratic form 
$$\tilde Q_A(i,j):=Q_{A/\sqrt{D}}(i,j)=\frac{Q_A(i,j)}{\sqrt{D}}$$
with unit determinant, and then scale back the final result $Z_{A/\sqrt{D}}(s)$ by $\sqrt{D}^{-s/2}$.
\item Notice that by symmetry $$\sum_{i,j}{}'e^{-\pi \overline{Q_A}(i,j) t} = \sum_{i,j}{}'e^{-\pi \overline{Q_A}(-j,i) t} = \sum_{i,j}{}'e^{-\pi Q_A(i,j)\,t/D},$$
and in the case of unit determinant, this further simplifies to 
$$\sum_{i,j}{}'e^{-\pi \overline{\tilde Q_A}(i,j) t} = \sum_{i,j}{}'e^{-\pi \tilde{Q}_A(i,j) t}$$
\end{enumerate}
Then the formula \eqref{eq:epstein_anal} simplifies to:
\begin{equation}
Z_A(1) = \frac{1}{\sqrt[4]{D}}Z_{A/\sqrt{D}}(1) = \frac{1}{\sqrt[4]{D}}\left(-4+2\sum_{i,j}{}' \frac{\Gamma(\frac{1}{2}, \pi \tilde Q_A(i,j))}{\sqrt{\pi \tilde Q_A(i,j)}}\right).
\label{eq:epstein_anal_s1}
\end{equation}
where the summands are incomplete Gamma function values $\Gamma(\frac{1}{2},x)$ scaled by $\frac{1}{\sqrt{x}}$, which is positive and bounded:
$$\frac{1}{\sqrt{x}}\Gamma\left(\frac{1}{2}, x\right) = \frac{1}{\sqrt{x}}\int_{x}^\infty t^{\frac{1}{2}-1}\,e^{-t}\,\mathrm{d}t < \frac{e^{-x}}{x}.$$

Since the upper bound $e^{-x}/x$ is strictly decreasing and less than the double-precision machine epsilon $\epsilon_\text{machine}$ for all $x\geq 33$, we only need to sum over all $i,j$ such that $\pi \tilde Q_A(i,j) < 33$. To this end, note that
$$\tilde Q_A(i,j)\geq\lambda \cdot(i^2+j^2)$$
where $\lambda$ is the smaller eigenvalue of $\tilde Q_A$ given by
$$\lambda = \left(\frac{E+G}{2}-\frac{\sqrt{(E-G)^2+4F^2}}{2}\right)/\sqrt{D}>0,$$
therefore the sum \eqref{eq:epstein_anal_s1} can be truncated to $-N\leq i,j\leq N$ where $N = \max\{n:\pi\lambda n^2<33\}$, this results in the following practical evaluation formula:
\begin{equation}
Z_A(1) \approx \frac{1}{\sqrt[4]{D}}\left(-4+2\sum_{i=-N}^N\sum_{j=-N}^N{}' \frac{\Gamma(\frac{1}{2}, \pi \tilde Q_A(i,j))}{\sqrt{\pi \tilde Q_A(i,j)}}\right),\quad N = \Bigg\lfloor \sqrt{\frac{33}{\pi\lambda_-}}\Bigg\rfloor
\label{eq:epstein_anal_s1_trunc}
\end{equation} 
which guarantees to give double-precision accuracy. In practice (such as for the examples in Section \ref{sc:results}), we find that generally $N\lesssim 5$. Furthermore, the fact that $\tilde Q_A(i,j) = \tilde Q_A(-i,-j)$ can save the total work by another factor of $2$ when evaluating the above sum. Therefore the evaluation formula \eqref{eq:epstein_anal_s1_trunc} is highly efficient.

\begin{remark}
The eigenvalue $\lambda$ is in fact the singular value of the local basis $(\rr_u,\rr_v)$, therefore using a ``good'' parameterization such that the local bases are close to orthonormal everywhere will reduce the cost of evaluating $Z_A(s)$. However, the correction weights only need to be pre-computed once, and we find that in practice this cost is typically small compared to other costs such as the iterative application of the punctured trapezoidal rule via fast summation techniques.
\end{remark}

\section{Double-layer potential and parametric derivatives of the Epstein zeta function} 
\label{sc:dlp_epstein_der}
We now turn to the derivation of the correction for the Laplace double-layer potential (DLP). Assuming $\rr(0,0)=\mathbf{0}$ such that $\rr(0,0)-\rr(u,v)\equiv-\rr(u,v)$, we look for a correction formula of the form
\begin{equation}
\begin{aligned}
&\int_{-a}^a\int_{-a}^a\frac{-\rr(u,v)\cdot\nn(u,v)}{4\pi r^3}\sigma(u,v)J(u,v)\d u\d v \\ &\approx \frac{1}{4\pi}\sum_{i=-N}^{N}\sideset{}'\sum_{j=-N}^{N}\frac{-\rr(ih,jh)\cdot\nn(ih,jh)}{r(ih,jh)^3}\sigma(ih,jh)J(ih,jh) + \frac{\sigma(0,0)J(0,0)}{4\pi}\tau_{0,0}
\end{aligned}
\label{eq:lap_dlp_corr}
\end{equation}
where $\nn = \frac{\rr_u\times\rr_v}{|\rr_u\times\rr_v|}$ is the surface unit normal at $\rr(u,v)$.

As with the SLP kernel, we first approximate the DLP kernel by (again, analysis deferred to Section \ref{sc:high-order_quad})
\begin{equation}
\frac{-\rr\cdot\nn}{r^3}\approx \frac{Q_B(u,v)/2}{\sqrt{Q_A(u,v)}^3}
\end{equation}
for $u,v\approx 0$, where
\begin{equation}
Q_B(u,v) = \cE u^2+2\cF uv+\cG v^2,\quad B := \begin{bmatrix} \cE & \cF\\ \cF & \cG\end{bmatrix}
\end{equation}
is the \emph{second fundamental form} at $(0,0)$ with coefficients
$$\cE=\rr_{uu}(0,0)\cdot\nn(0,0),\quad \cF=\rr_{uv}(0,0)\cdot\nn(0,0),\quad \cG=\rr_{vv}(0,0)\cdot\nn(0,0).$$
Substituting the approximation into \eqref{eq:lap_dlp_corr} with $\sigma(u,v)J(u,v)\equiv1$ gives
\begin{equation}
\begin{aligned}
\tau_{0,0} &\approx (-W_{A,B}^{(N)}(1))\cdot h
\end{aligned}
\label{eq:wigner_limit_dlp}
\end{equation}
where
\begin{align}
W_{A,B}^{(N)}(s) &:= Z_{A,B}^{(N)}(s) - I_{A,B}^{(N)}(s)\\
Z_{A,B}^{(N)}(s) &:= \sum_{i=-N}^{N}\sideset{}'\sum_{j=-N}^{N}\frac{Q_B(i,j)\cdot s/2}{Q_A(i,j)^{s/2+1}}\\
I_{A,B}^{(N)}(s) &:= \int_{-N-\frac{1}{2}}^{N+\frac{1}{2}}\int_{-N-\frac{1}{2}}^{N+\frac{1}{2}}\frac{Q_B(u,v)\cdot s/2}{Q_A(u,v)^{s/2+1}}\d u\d v.
\end{align}
Then analogous to Theorem \ref{thm:wigner_lim}, we have:
\begin{theorem}
\label{thm:wigner_lim_deriv}
The Wigner-type limit $W_{A,B}(s):=\lim_{N\to\infty}W_{A,B}^{(N)}(s)$ exists in the strip $0<\mathrm{Re}\,s<2$ and coincides therein with the analytic continuation of  $Z_{A,B}(s):=\lim_{N\to\infty}Z_{A,B}^{(N)}(s)$.
\end{theorem}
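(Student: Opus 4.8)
\emph{Proof strategy.}~The plan is to reduce Theorem~\ref{thm:wigner_lim_deriv} to Theorem~\ref{thm:wigner_lim} by recognizing $Z_{A,B}^{(N)}$, $I_{A,B}^{(N)}$ and $W_{A,B}^{(N)}$ as minus the $\epsilon$-derivatives at $\epsilon=0$ of $Z_{A+\epsilon B}^{(N)}$, $I_{A+\epsilon B}^{(N)}$ and $W_{A+\epsilon B}^{(N)}$. Since $\frac{\d}{\d\epsilon}Q_{A+\epsilon B}(u,v)^{-s/2}\big|_{\epsilon=0} = -\tfrac{s}{2}\,Q_B(u,v)\,Q_A(u,v)^{-s/2-1}$, differentiating the finite sum defining $Z_{A+\epsilon B}^{(N)}(s)$ term by term and differentiating $I_{A+\epsilon B}^{(N)}(s)$ under the integral sign (legitimate for $\mathrm{Re}\,s<2$, where both the integrand and its $\epsilon$-derivative are dominated near the origin by a fixed function of the form $C\,r^{-\mathrm{Re}\,s}$) yields, for every $N$ and every $s$ with $0<\mathrm{Re}\,s<2$,
\begin{equation}
\frac{\d}{\d\epsilon}\Big|_{\epsilon=0} W_{A+\epsilon B}^{(N)}(s) \;=\; -\,W_{A,B}^{(N)}(s).
\label{eq:param_der_of_W}
\end{equation}
For real $\epsilon$ with $|\epsilon|<\epsilon_0(A,B)$ the matrix $A+\epsilon B$ is still positive definite, so Theorem~\ref{thm:wigner_lim} applies to $Q_{A+\epsilon B}$ and gives the pointwise limit $W_{A+\epsilon B}^{(N)}(s)\to Z_{A+\epsilon B}(s)$ as $N\to\infty$; hence, if $\frac{\d}{\d\epsilon}$ and $\lim_N$ may be interchanged in \eqref{eq:param_der_of_W}, the limit of $W_{A,B}^{(N)}(s)$ exists and equals $-\tfrac{\d}{\d\epsilon}\big|_{0}Z_{A+\epsilon B}(s)$.

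To justify the interchange I would prove that the $\epsilon$-derivatives converge uniformly near $\epsilon=0$. The same computation gives $\frac{\d}{\d\epsilon}W_{A+\epsilon B}^{(N)}(s) = -W_{A+\epsilon B,B}^{(N)}(s)$, so it suffices to show that $W_{C,B}^{(N)}(s)$ converges as $N\to\infty$ \emph{uniformly over $C$ in the compact family $\{A+\epsilon B:|\epsilon|\le\epsilon_0/2\}$ of positive definite forms} (and locally uniformly in $s$). I would obtain this from a midpoint-rule decomposition: writing $C_{ij}$ for the open unit cell centred at $(i,j)$, setting $f_C(u,v):=\tfrac{s}{2}Q_B(u,v)Q_C(u,v)^{-s/2-1}$, and using that $\bigcup_{|i|,|j|\le N}C_{ij}$ is the square of integration,
\begin{equation}
W_{C,B}^{(N)}(s) = -\int_{C_{00}} f_C(u,v)\,\d u\,\d v \;+\; \sideset{}{'}\sum_{|i|,|j|\le N}\left(f_C(i,j) - \int_{C_{ij}} f_C(u,v)\,\d u\,\d v\right).
\end{equation}
The function $f_C$ is homogeneous of degree $-s$, so for $\rho:=|(i,j)|$ large one has $|f_C|\lesssim\rho^{-\mathrm{Re}\,s}$ and $|\partial^2 f_C|\lesssim\rho^{-\mathrm{Re}\,s-2}$ on $C_{ij}$, with constants uniform over the compact family (for which $Q_C$ enjoys uniform two-sided eigenvalue bounds). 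The midpoint-quadrature remainder on each such cell is therefore $O(\rho^{-\mathrm{Re}\,s-2})$, and $\sum_{\rho}\rho\cdot\rho^{-\mathrm{Re}\,s-2}$ converges exactly when $\mathrm{Re}\,s>0$; the finitely many cells near the origin contribute bounded terms, while $\int_{C_{00}}|f_C|<\infty$ precisely because $\mathrm{Re}\,s<2$. As all these bounds are uniform in $C$, the partial sums are uniformly Cauchy, which is the required uniform convergence. I expect this step---essentially a parameter-uniform version of (the relevant direction of) Theorem~\ref{thm:wigner_lim}---to be where the real work lies, especially the bookkeeping of the finitely many cells with small $\rho$ and the verification that the eigenvalue control on $Q_C$ is uniform over the family.

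Granting the uniform convergence of the derivatives, the classical theorem on differentiation of a convergent sequence of functions (pointwise convergence of $\epsilon\mapsto W_{A+\epsilon B}^{(N)}(s)$ at $\epsilon=0$ together with uniform convergence of its derivative) shows that $\epsilon\mapsto Z_{A+\epsilon B}(s)$ is differentiable near $0$ and that $W_{A,B}(s):=\lim_{N}W_{A,B}^{(N)}(s)$ exists throughout $0<\mathrm{Re}\,s<2$, with $W_{A,B}(s)=-\tfrac{\d}{\d\epsilon}\big|_{0}Z_{A+\epsilon B}(s)$ by \eqref{eq:param_der_of_W}. It remains to identify this with the analytic continuation of $Z_{A,B}(s)$: for $\mathrm{Re}\,s>2$ the series converge absolutely and term-by-term differentiation gives $\tfrac{\d}{\d\epsilon}\big|_{0}Z_{A+\epsilon B}(s)=-\lim_N Z_{A,B}^{(N)}(s)=-Z_{A,B}(s)$; on the other hand $Z_{A+\epsilon B}(s)$ is given by the integral representation \eqref{eq:epstein_anal} with $A$ replaced by $A+\epsilon B$, which is jointly holomorphic in $s$ on $\CC\setminus\{2\}$ and in $\epsilon$, so $s\mapsto\tfrac{\d}{\d\epsilon}\big|_{0}Z_{A+\epsilon B}(s)$ is meromorphic on $\CC$ with at worst a pole at $s=2$ and hence coincides with the analytic continuation of $-Z_{A,B}(s)$ to the strip. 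Combining the two displays gives $W_{A,B}(s)=Z_{A,B}(s)$ throughout $0<\mathrm{Re}\,s<2$, as claimed. (Alternatively, one may carry out the argument with complex $\epsilon$: after checking that $Q_{A+\epsilon B}$ stays off the branch cut of $z\mapsto z^{-s/2}$ for $|\epsilon|$ small, so that $W_{A+\epsilon B}^{(N)}(s)$ is holomorphic in $\epsilon$, the same cell estimate gives local boundedness in $N$, and Vitali's theorem together with the Weierstrass convergence theorem for derivatives yields the conclusion.)
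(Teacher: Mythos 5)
Your argument is sound, but it is genuinely different from the paper's. The paper disposes of Theorem \ref{thm:wigner_lim_deriv} in one line, as a corollary of the more general Theorem \ref{thm:converged_wigner-type}: there the identity $Z_A(s)=\lim_{h\to0}\cL_h^{(s-1)/2}[1]$ is first extended in $s$ by analyticity and then differentiated in the parameters $E,F,G$ under the limit sign, with the quantitative justification (an $O(h^{2K})$ rate, via Poisson summation and the cutoff $\eta$) supplied in Appendix \ref{sc:app:proof}. You instead work directly with the $N\to\infty$ quantities, realizing $W_{A,B}^{(N)}(s)$ as $-\tfrac{\d}{\d\epsilon}\big|_{0}W_{A+\epsilon B}^{(N)}(s)$, proving uniform-in-$\epsilon$ convergence of these derivatives by an elementary midpoint-rule cell estimate (error $O(\rho^{-\mathrm{Re}\,s-2})$ per unit cell, summable for $\mathrm{Re}\,s>0$, with the central cell integrable for $\mathrm{Re}\,s<2$ and constants uniform by eigenvalue bounds on the compact family), and then invoking the classical theorem on differentiating a limit of functions together with Theorem \ref{thm:wigner_lim} applied to $A+\epsilon B$; the bridge to the analytic continuation is made through term-by-term differentiation for $\mathrm{Re}\,s>2$ and the joint holomorphy of the representation \eqref{eq:epstein_anal} in $(s,\epsilon)$, which is the right way to connect the two half-planes since $W_{A,B}^{(N)}$ is only defined for $\mathrm{Re}\,s<2$. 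Your route is more self-contained and elementary for this particular statement — no cutoff function, no Fourier analysis — and it makes the interchange of limit and parametric derivative explicit rather than asserted; what it does not give (and what the paper's heavier machinery buys) is the uniform-in-$h$, quantitative version with the bump function $\eta$ that Theorems \ref{thm:converged_wigner-type} and \ref{thm:high-order-with-converged-wigner} need for the high-order quadrature analysis, but that is not required for Theorem \ref{thm:wigner_lim_deriv} itself. The only places needing a touch more care if written out in full are the justification that $s\mapsto\tfrac{\d}{\d\epsilon}\big|_{0}Z_{A+\epsilon B}(s)$ is holomorphic (differentiate \eqref{eq:epstein_anal} under the rapidly convergent theta sums and integrals, noting $\det(A+\epsilon B)$ and $Q_{(A+\epsilon B)^{-1}}$ are smooth in $\epsilon$) and the uniformity of the cell estimates in the complex parameter $s$ on compact subsets of the strip; both are routine.
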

\begin{proof}
This is a corollary of Theorem \ref{thm:converged_wigner-type} in Section \ref{sc:high-order:converged_Wigner-type}.
\qed\end{proof}

Theorem \ref{thm:wigner_lim_deriv} implies that
the exact value of $\tau_{0,0}$ in \eqref{eq:lap_dlp_corr} is
\begin{equation}
    \tau_{0,0} = (-W_{A,B}(1))\cdot h \equiv (-Z_{A,B}(1))\cdot h,
\end{equation}
where, once again, the integrals $I_{A,B}^{(N)}(s)$ do \emph{not} play any role in this limiting value. The problem of computing the conditionally convergent $W_{A,B}(s)$ is again reduced to one of evaluating $Z_{A,B}(s)$. 

To efficiently evaluate $Z_{A,B}$, we make the following observation:
\begin{equation}
\frac{(\cE u^2+2\cF uv+\cG v^2)\cdot s/2}{(Eu^2+2Fuv+Gv^2)^{s/2+1}} = -\left(\cE\frac{\partial}{\partial E}+\cF\frac{\partial}{\partial F}+\cG\frac{\partial}{\partial G}\right)\frac{1}{(Eu^2+2Fuv+Gv^2)^{s/2}}
\end{equation}
which implies that differentiating the Epstein zeta function \eqref{eq:epstein_zeta_summation_form} gives
\begin{equation}
Z_{A,B}(s) = -\left(\cE\frac{\partial}{\partial E}+\cF\frac{\partial}{\partial F}+\cG\frac{\partial}{\partial G}\right) Z_A(s).
\end{equation}

Therefore, an analytic formula for $Z_{A,B}(1)$ can be constructed by substituting the above relation into \eqref{eq:epstein_anal_s1}, which yields
\begin{equation}
\begin{aligned}
Z_{A,B}(1)=\frac{1}{\sqrt[4]{D}}\Bigg(-2H_{A,B}+&\sum_{i,j}{}'\Bigg\{\frac{\tilde Q_B}{\tilde Q_A}\cdot\frac{\Gamma(\frac{1}{2},\pi\tilde Q_A)}{\sqrt{\pi\tilde Q_A}}\\
&\qquad +\left(2\frac{\tilde Q_B}{\tilde Q_A}-H_{A,B}\right)\,e^{-\pi\tilde Q_A}\Bigg\}\Bigg)
\end{aligned}
\label{eq:epstein_anal_s1_deriv}
\end{equation}
where
$$H_{A,B} := \frac{G\cE+E\cG-2F\cF}{2D},\quad \tilde Q_A = \frac{Q_A}{\sqrt{D}},\quad \tilde Q_B = \frac{Q_B}{\sqrt{D}}$$
and we have dropped the explicit dependence on $(i,j)$ since it is clear from context. Note that $H_{A,B}$ is in fact the mean curvature at $\rr(0,0)$. Truncating the above formula the same way as \eqref{eq:epstein_anal_s1_trunc} gives an efficient algorithm for evaluating $Z_{A,B}(1)$. 

Note that the gamma function values in \eqref{eq:epstein_anal_s1_trunc} can be reused in \eqref{eq:epstein_anal_s1_deriv}. Such reuse of gamma function values also applies to the Helmholtz kernels, reducing the cost of pre-computing correction weights by a factor of 2 when solving the Helmholtz problem using a combined field formulation (see Example 2 of Section \ref{sc:results}).

In Appendix \ref{sc:app:epstein_higher_deriv} we also provide formulae and algorithms for computing the higher parametric derivatives of $Z_A(s)$ efficiently, which are useful for constructing the higher-order corrected trapezoidal rules to be introduced next.

\begin{remark}[Helmholtz DLP]\label{rmk:helmholtz_dlp}
The correction weight of the $O(h^3)$ corrected trapezoidal rule for the Helmholtz DLP is exactly the same as the Laplace DLP. To see this, we use the expansion
\begin{equation}
(1-i\kappa r)e^{i\kappa r}\frac{\rr\cdot\nn}{r^3}  = \left(\frac{1}{r}+\frac{\kappa^2}{2}r+O(r^2)\right)\frac{\rr\cdot\nn}{r^2}
\end{equation}
where the fraction $\frac{\rr\cdot\nn}{r^2}$ is bounded near $0$. Unlike the Helmholtz SLP, there is no $O(1)$ term in this expansion, thus an $O(h^2)$ correction is \emph{not} needed.
\end{remark}

\section{High-order singular quadratures}
\label{sc:high-order_quad}

We first define some simplified notations. We will use $\uu\equiv(u,v)\in\RR^2$ and $\d\uu\equiv\d u\d v$ interchangeably, and write $$\int_{|\uu|\leq a}f(\uu)\d\uu:=\int_{-a}^a\int_{-a}^af(u,v)\d u\d v.$$
We use $O(\uu^k) = O(u^k,u^{k-1}v,\dots,v^k)$ to denote terms of order $k$ or higher and $\Theta(\uu^k)$ to denote homogeneous polynomials of order exactly $k$. We will also denote $\ii\equiv(i,j)\in\ZZ^2$ and $|\ii|:=\max(i,j)$, then the punctured summation is written as
$$
\sideset{}'\sum_{|\ii|\leq N}f(\ii) := \sum_{i=-N}^{N}\sideset{}'\sum_{j=-N}^{N}f(i,j).
$$
In addition, we will drop the explicit dependence of $Z_A(s)$ and $Q_A(u,v)$ on $A$ and write $Z(s)$ and $Q(u,v)\equiv Q(\uu)$ whenever $A$ is clear from the context.

\subsection{High-order errors as Wigner-type limits}
\label{sc:high-order_analysis}
To construct a high-order corrected trapezoidal rule for the Laplace SLP, a complete picture of the singular components in the kernel is needed. To this end, we expand the kernel $1/r(\uu)$ in terms of the fundamental form $Q(\uu)=Eu^2+2Fuv+Gv^2$ as follows. Using the binomial series
\begin{equation}
(1+x)^{-\frac{1}{2}} = 1-\frac{1}{2}x+\frac{3}{8}x^2+\dots+\binom{-\frac{1}{2}}{m}x^m+\dots,
\end{equation}
we can write
\begin{equation}
\frac{1}{r} = \frac{1}{\sqrt{Q+(r^2-Q)}} = \frac{1}{Q^{\frac{1}{2}}}-\frac{r^2-Q}{2Q^\frac{3}{2}}+\frac{3(r^2-Q)^2}{8Q^{\frac{5}{2}}}-\frac{5(r^2-Q)^3}{16Q^{\frac{7}{2}}}+\dots.
\label{eq:rinv_expan_in_1st_fund_form}
\end{equation}
Assume that $P(\uu)$ is a smooth function (e.g., $P(\uu):=\sigma(\uu)J(\uu)$), and define 
\begin{equation}
    \tilde{P}(\uu) = P(\uu)\eta(\uu)
\end{equation}
where $\eta\in C_c([-a,a]^2)$ is a smooth and compactly supported function which also satisfies
\begin{equation}
    \eta(\mathbf{0})=1,\quad \eta(\uu) \equiv \eta(-\uu).
    \label{eq:bump_func}
\end{equation}
Then the SLP can be transformed as follows
\begin{equation}
\begin{aligned}
\int_{|\uu|\leq a}\frac{\tilde{P}(\uu)}{r(\uu)}\,\d\uu &= \int_{|\uu|\leq a}\Bigg(\frac{1}{r}-\sum_{m=0}^M\binom{-\frac{1}{2}}{m}\frac{(r(\uu)^2-Q(\uu))^m}{Q(\uu)^{m+\frac{1}{2}}}\Bigg)\tilde{P}(\uu)\,\d\uu\\
&\qquad + \sum_{m=0}^M\binom{-\frac{1}{2}}{m}\int_{|\uu|\leq a}\frac{(r(\uu)^2-Q(\uu))^m}{Q(\uu)^{m+\frac{1}{2}}}\tilde{P}(\uu)\,\d\uu \\
& = \sideset{}'\sum_{|\ii|\leq N}\frac{\tilde{P}(\ii h)}{r(\ii h)}h^2 + O(h^{M+2}) \\
  &\qquad + \sum_{m=0}^M\binom{-\frac{1}{2}}{m} \Bigg(\int\limits_{|\uu|\leq a}\frac{(r(\uu)^2-Q(\uu))^m}{Q(\uu)^{m+\frac{1}{2}}}\tilde{P}(\uu)\,\d\uu \\
 & \qquad -\sideset{}'\sum_{|\ii|\leq N}\frac{(r(\ii h)^2-Q(\ii h))^m}{Q(\ii h)^{m+\frac{1}{2}}}\tilde{P}(\ii h)\,h^2\Bigg)
\end{aligned}
\label{eq:lap_corr_high_order}
\end{equation}
where in the first equality, we add and subtract the first $M+1$ terms in the expansion \eqref{eq:rinv_expan_in_1st_fund_form}; in the second equality, the punctured trapezoidal rule with $h=a/(N+\frac{1}{2})$ is applied to the first integral whose integrand is a $C^{M}_c([-a,a]^2)$ function therefore resulting in a $O(h^{M+2})$ error, then we group the integrals and summations into pairs, such that each pair corresponds to one term in the expansion \eqref{eq:rinv_expan_in_1st_fund_form}. 

We call each integral-summation pair on the right-hand side of \eqref{eq:lap_corr_high_order} a \emph{Wigner pair} and denote the $m$-th pair by $W_h^m[\tilde{P}]$, $m=0,1,2,\dots$, i.e.
\begin{equation}
    \begin{aligned}
    W_h^m[\tilde{P}] := \binom{-\frac{1}{2}}{m}&\Bigg(\int\limits_{|\uu|\leq a}\frac{\big(r^2(\uu)-Q(\uu)\big)^{m}\tilde{P}(\uu)}{\sqrt{Q(\uu)}^{2m+1}}\,\d\uu \\ 
    & \qquad - \sideset{}'\sum_{|\ii |\leq N} \frac{\big(r^2(\ii h)-Q(\ii h)\big)^{m}\tilde{P}(\ii h)}{\sqrt{Q(\ii h)}^{2m+1}}\,h^2\Bigg)
    \end{aligned}
    \label{eq:wigner_pair}
\end{equation}

To obtain practical formulae for $W_h^m[P]$, we further expand the smooth function $r^2-Q$ as a Taylor-Maclaurin series
\begin{equation}
r^2-Q = \Big(Q+q_3+q_4+q_5+\dots\Big)-Q = q_3+q_4+O(\uu^5)
\label{eq:r2-Q}
\end{equation}
where $q_k$ denotes the $\Theta(\uu^k)$ terms in the expansion such that
$$
q_2 \equiv Q = \dd^1\cdot\dd^1, \; q_3 = 2\dd^1\cdot\dd^2,\; q_4 = 2\dd^1\cdot\dd^3+\dd^2\cdot\dd^2, \; q_5 = 2\dd^1\cdot\dd^4+2\dd^2\cdot\dd^3,\; \dots
$$
where $\dd^k$ is the $\Theta(\uu^k)$ term in the Taylor series of $\rr(\uu)$ given by
\begin{align*}
\dd^1:=&\rr_u(0)u+\rr_v(0)v\\
\dd^2:=&\frac{1}{2!}\Big(\rr_{uu}(0)u^2+2\rr_{uv}(0)uv+\rr_{vv}(0)v^2\Big)\\
\dd^3:=&\frac{1}{3!}\Big(\rr_{uuu}(0)u^3+3\rr_{uuv}(0)u^2v+3\rr_{uvv}(0)uv^2+\rr_{vvv}(0)v^3\Big)
\end{align*}
and so on. In addition, let the expansions of the smooth functions $(r^2-Q)^m$ and $P$ be
\begin{align}
(r(\uu)^2-Q(\uu))^m &= \sum_{k=3m}^\infty q_k^{(m)}(\uu) = q_{3m}^{(m)}+q_{3m+1}^{(m)}+\dots \label{eq:(r2-Q)^m}\\
P(\uu) &=\sum_{k=0}^\infty p_k(\uu) = p_0+p_1+p_2 + \dots \label{eq:P_series}
\end{align}
such that $q_k^{(m)}$ and $p_k$ are the $\Theta(\uu^k)$ terms. In particular, $q_k^{(1)} \equiv q_k$ and $q_k^{(0)}=\delta_{0k}$.

\begin{lemma}
\label{lem:wigner-pair-expan}
The Wigner pair $W_h^m[\tilde{P}]$ satisfies
\begin{equation}
    W_h^m[\tilde{P}] = \sum_{k=\left\lceil\frac{m}{2}+1\right\rceil}^K \tilde{C}^{(m)}_{2k-1}(h)\,h^{2k-1} + O(h^{2K+1})
    \label{lem:wigner-pair-expan;eq:wigner-type-series}
\end{equation}
for any integer $K\geq\left\lceil\frac{m}{2}+1\right\rceil$, where the coefficients $\tilde{C}^{(m)}_{2k-1}(h)$ are given by the following \emph{Wigner-type limits}
\begin{equation}
    \begin{aligned}
    \tilde{C}^{(m)}_{2k-1}(h) = \binom{-\frac{1}{2}}{m}\sum_{\substack{r+s=2(m+k-1)\\r\geq3m,\,s\geq0}}&\Bigg(\int\limits_{|\uu|<\infty}\frac{ q_r^{(m)}(\uu)p_s(\uu)}{\sqrt{Q(\uu)}^{2m+1}}\eta(\uu h)\,\d\uu \\
    & \quad - \sideset{}'\sum_{|\ii |<\infty} \frac{ q_r^{(m)}(\ii )p_s(\ii )}{\sqrt{Q(\ii )}^{2m+1}}\eta(\ii h)\Bigg).
    \end{aligned}
\label{lem:wigner-pair-expan;eq:C}
\end{equation}
Consequently, $W_h^m[\tilde{P}]$ contains a $\Theta(h^{2k-1})$ component if and only if $0\leq m\leq2k-2$.
\end{lemma}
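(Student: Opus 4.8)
The plan is to expand both the integrand and the summand of $W_h^m[\tilde P]$ in homogeneous Taylor polynomials, rescale by $\uu\mapsto h\uu$, and track the powers of $h$ that survive. First I would substitute the series \eqref{eq:(r2-Q)^m} for $(r^2-Q)^m$ and \eqref{eq:P_series} for $P$ into the definition \eqref{eq:wigner_pair}, so that $W_h^m[\tilde P]$ becomes a (formally infinite) sum over pairs $(r,s)$ with $r\geq 3m$, $s\geq 0$ of terms of the form
\begin{equation*}
\binom{-\frac{1}{2}}{m}\left(\int_{|\uu|\leq a}\frac{q_r^{(m)}(\uu)\,p_s(\uu)}{\sqrt{Q(\uu)}^{2m+1}}\,\eta(\uu)\,\d\uu - \sideset{}'\sum_{|\ii|\leq N}\frac{q_r^{(m)}(\ii h)\,p_s(\ii h)}{\sqrt{Q(\ii h)}^{2m+1}}\,\eta(\ii h)\,h^2\right).
\end{equation*}
The numerator $q_r^{(m)}p_s$ is homogeneous of degree $r+s$ and $\sqrt{Q}^{2m+1}$ is homogeneous of degree $2m+1$, so the whole fraction is homogeneous of degree $d:=r+s-2m-1$. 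Under $\uu\mapsto h\uu$ the integral scales as $h^{d+2}$ and, matching the grid $\ii\mapsto\ii h$, the summand scales as $h^{d}\cdot h^2 = h^{d+2}$ as well, so each $(r,s)$-term contributes exactly $h^{d+2}=h^{r+s-2m+1}$ times the difference of the rescaled (now $h$-independent apart from the cutoff $\eta(\uu h)$, $\eta(\ii h)$) integral and sum. Writing $2k-1 := r+s-2m+1$, i.e. $r+s = 2(m+k-1)$, reproduces exactly the index constraint and the formula \eqref{lem:wigner-pair-expan;eq:C} for $\tilde C^{(m)}_{2k-1}(h)$; the residual $O(h^{2K+1})$ comes from truncating the $(r,s)$-sum, using that the tails of \eqref{eq:(r2-Q)^m} and \eqref{eq:P_series} are controlled on the compact support of $\eta$ (this is where the smoothness of $\Gamma$ and of $P$, and the compact support of $\tilde P$, enter). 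The lower limit $k\geq\lceil m/2+1\rceil$ is forced by $r\geq 3m$ and $s\geq 0$: then $r+s\geq 3m$, so $2(m+k-1)\geq 3m$, i.e. $k\geq m/2+1$; since $k$ is an integer this is $k\geq\lceil m/2+1\rceil$.

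The one genuine subtlety — and the step I expect to be the main obstacle — is justifying that the difference of the divergent integral and the divergent punctured sum in \eqref{lem:wigner-pair-expan;eq:C} is in fact finite, i.e. that these Wigner-type limits exist. Both $\int_{|\uu|<\infty}q_r^{(m)}p_s/\sqrt{Q}^{2m+1}$ and $\sideset{}'\sum q_r^{(m)}(\ii)p_s(\ii)/\sqrt{Q(\ii)}^{2m+1}$ diverge at $\infty$ once the homogeneity degree $d=r+s-2m-1$ is $\geq -2$, which is exactly the regime of interest. The cutoff $\eta(\uu h)$, $\eta(\ii h)$ keeps each piece finite for fixed $h>0$, but to extract a clean $h$-power expansion one must show the bracketed difference converges as the cutoff is removed (equivalently as $N\to\infty$), and that this limit is what is meant by \eqref{lem:wigner-pair-expan;eq:C}. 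This is precisely the Wigner-limit phenomenon already established for the leading term in Theorem \ref{thm:wigner_lim} (and, for the DLP-type numerators, in Theorem \ref{thm:wigner_lim_deriv} / Theorem \ref{thm:converged_wigner-type}); I would invoke the general convergence result of Section \ref{sc:high-order:converged_Wigner-type} — which treats rational integrand/summand pairs with homogeneous numerator over a power of a positive definite form — to conclude existence of each $\tilde C^{(m)}_{2k-1}(h)$, noting that for odd homogeneity degree $d$ the integral over $\RR^2$ vanishes by antisymmetry $\uu\mapsto-\uu$ (as does the punctured sum), which is consistent with the fact that only $2k-1$ (odd total power after the $h^2$ from the sum, i.e. the numerators $q_r^{(m)}p_s$ of even degree $r+s=2(m+k-1)$) appear.

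Finally, to get the last assertion, I would read off from the constraints $r\geq 3m\geq 0$ and $s\geq 0$ with $r+s=2(m+k-1)$ that a nonempty index set $\{(r,s)\}$ — hence a potentially nonzero $\Theta(h^{2k-1})$ component — exists iff $3m\leq 2(m+k-1)$, i.e. $m\leq 2k-2$; combined with $m\geq 0$ this is $0\leq m\leq 2k-2$. (That the coefficient is generically nonzero, not merely that the index set is nonempty, follows because for $m\leq 2k-2$ one can take $s=0$, $r=2(m+k-1)\geq 3m$ and the corresponding integral/sum over $\RR^2$ is a nondegenerate lattice sum of the Epstein type, generically nonvanishing; I would state this as the "only if" direction and leave the explicit Epstein-zeta identification to the evaluation discussion in Sections \ref{sc:eval_epstein_zeta}–\ref{sc:dlp_epstein_der} and the Appendix.)
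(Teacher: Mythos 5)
Your core computation is exactly the paper's proof: expand $(r^2-Q)^m P$ into homogeneous pieces $y_k=\sum_{r+s=k}q_r^{(m)}p_s$, rescale $\uu\mapsto h\uu$ so that the $(r,s)$-term contributes $h^{r+s-2m+1}$ times a bracketed integral-minus-sum, observe that odd-degree terms vanish by the symmetry $\eta(\uu)\equiv\eta(-\uu)$, bound the Taylor remainder by the same rescaling to get $O(h^{2K+1})$, and read off the range of $k$ and the condition $0\le m\le 2k-2$ from $r\ge 3m$, $s\ge 0$. (The paper is slightly more careful than "tails controlled on the compact support of $\eta$": it applies the identical change-of-variables argument to the remainder $R(\uu)=O(\uu^{2(K+m)})$ to obtain the $O(h^{2K+1})$ bound, but your idea is the same.)

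The one point where you go astray is what you identify as "the main obstacle." The coefficient formula \eqref{lem:wigner-pair-expan;eq:C} does \emph{not} involve removing the cutoff: the factors $\eta(\uu h)$ and $\eta(\ii h)$ remain inside, and since $\eta$ is compactly supported in $[-a,a]^2$ with $a=(N+\tfrac12)h$, the "integrals and sums over $|\uu|<\infty$, $|\ii|<\infty$" are in fact supported on $|\uu|\le N+\tfrac12$; the extension of the domains to infinity is a triviality, not a limit. Hence each $\tilde{C}^{(m)}_{2k-1}(h)$ is manifestly finite for every fixed $h>0$ (the only possible singularity is at the origin, where the integrand is $O(|\uu|^{2k-3})$, integrable in $\RR^2$, and the origin is excluded from the punctured sum), and no Wigner-limit or Epstein-zeta machinery is needed at this stage. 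What \emph{does} require the lattice-sum theory is the separate question of whether these $h$-dependent coefficients converge as $h\to0$ and whether the converged values may be used in the error expansion; the paper deliberately defers that to Theorem \ref{thm:converged_wigner-type} and Theorem \ref{thm:high-order-with-converged-wigner} (with the rate supplied by Theorem \ref{thm:wigner-type_converge_rate}), and it is not part of this lemma. Invoking the Section \ref{sc:high-order:converged_Wigner-type} results here is therefore unnecessary and conflates the lemma with its downstream refinement, though it does not make your argument circular since those results do not rely on this lemma. With that clarification, your proof matches the paper's.
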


\begin{proof}
First define $Y:=(r^2-Q)^m\cdot P$, then using the series \eqref{eq:(r2-Q)^m} and \eqref{eq:P_series} gives
$$
\begin{aligned}
Y = (q^{(m)}_{3m}+q^{(m)}_{3m+1}+\cdots)(p_0+p_1+p_2+\dots) = \sum_{k=3m}^{2(K+m)-1}y_k + R
\end{aligned}
$$
where the remainder $R(\uu) = O(\uu^{2(K+m)})$ and
\begin{equation}
y_k = \sum_{\substack{r+s=k\\r\geq3m,\,s\geq0}}q^{(m)}_{r}p_s\label{eq:tk;lemma}
\end{equation}
represents the $\Theta(\uu^k)$ terms in the expansion of $Y$. The truncation at $k = 2(K+m)$ will prove convenient. 
Now, for each $y_k$,
$$
\begin{aligned}
&\int_{|\uu|\leq a}\frac{y_k(\uu)\eta(\uu)}{Q(\uu)^{m+\frac{1}{2}}}\,\d\uu - \sideset{}'\sum_{|\ii|\leq N} \frac{y_k(\ii h)\eta(\ii h)}{Q(\ii h)^{m+\frac{1}{2}}}\,h^2 \\
&=\Bigg(\int_{|\uu|\leq N+\frac{1}{2}}\frac{y_k(\uu)\eta(\uu h)}{Q(\uu)^{m+\frac{1}{2}}}\,\d\uu - \sideset{}'\sum_{|\ii |\leq N} \frac{y_k(\ii )\eta(\ii h)}{Q(\ii )^{m+\frac{1}{2}}}\Bigg)h^{k-2m+1}\\
&=\Bigg(\int_{|\uu|<\infty}\frac{y_k(\uu)\eta(\uu h)}{Q(\uu)^{m+\frac{1}{2}}}\,\d\uu - \sideset{}'\sum_{|\ii |<\infty} \frac{y_k(\ii )\eta(\ii h)}{Q(\ii )^{m+\frac{1}{2}}}\Bigg)h^{k-2m+1}
\end{aligned}
$$
where the last equality holds since $\eta$ is compactly supported. Note that the above expression is $O(h^{k-2m+1})$ when $k$ is even, and vanishes when $k$ is odd due to the odd integral/summation. Similarly, substituting $y_k$ with the remainder $R$ in the above derivation gives an expression that is $O(h^{2K+1})$.

Using these facts we have
\begin{equation}
\begin{aligned}
    W_h^m[\tilde{P}]
    =& \binom{-\frac{1}{2}}{m}\Bigg(\int_{|\uu|\leq a}\frac{Y(\uu)\eta(\uu)}{Q(\uu)^{m+\frac{1}{2}}}\,\d\uu - \sideset{}'\sum_{|\ii |\leq N} \frac{Y(\ii h)\eta(\ii h)}{Q(\ii h)^{m+\frac{1}{2}}}\,h^2\Bigg)\\
    =& \binom{-\frac{1}{2}}{m} \sum_{k=\left\lceil\frac{m}{2}+1\right\rceil}^{K}\Bigg(\int\limits_{|\uu|<\infty}\frac{y_{2(k+m-1)}(\uu)\eta(\uu h)}{Q(\uu)^{m+\frac{1}{2}}}\,\d\uu \\
    & \qquad\qquad\qquad\qquad - \sideset{}'\sum_{|\ii |<\infty} \frac{y_{2(k+m-1)}(\ii )\eta(\ii h)}{Q(\ii )^{m+\frac{1}{2}}}\Bigg)h^{2k-1}+O(h^{2K+1})
\end{aligned}
\end{equation}
Then substituting the above formula using \eqref{eq:tk;lemma} yields \eqref{lem:wigner-pair-expan;eq:wigner-type-series}.
\qed\end{proof}

The next theorem gives an error expansion of the punctured trapezoidal rule.

\begin{theorem}
\label{thm:high-order}
With $\eta$ as defined in \eqref{eq:bump_func}, the punctured trapezoidal rule discretization of the single-layer potential on a parametric surface centered at $\mathbf{0}$ has an error expansion of the form
\begin{equation}
    \int_{|\uu|\leq a}\frac{\tilde{P}(\uu)}{r(\uu)}\,\d\uu =\sideset{}'\sum_{|\ii |\leq N}\frac{\tilde{P}(\ii h)}{r(\ii h)}h^2 + \sum_{k=1}^K \tilde{C}_{2k-1}(h)\,h^{2k-1} + O(h^{2K+1})
    \label{thm:high-order;eq:error_expan}
\end{equation}
for any integer $K>0$. The coefficients $\tilde{C}_{2k-1}$ for any $k\geq1$ are given by
\begin{equation}
\tilde{C}_{2k-1}(h) := \sum_{m=0}^{2k-2} \tilde{C}^{(m)}_{2k-1}(h)
\label{thm:high-order;eq:err_coeff}
\end{equation}
where for each $m$, the constants $\tilde{C}^{(m)}_{2k-1}(h)$ are the Wigner-type limits \eqref{lem:wigner-pair-expan;eq:C} that associate with the Wigner pair $W_h^m[\tilde{P}]$.
\end{theorem}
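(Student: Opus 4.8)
The plan is to assemble the error expansion in \eqref{thm:high-order;eq:error_expan} by combining the identity \eqref{eq:lap_corr_high_order} with the Wigner-pair expansion established in Lemma \ref{lem:wigner-pair-expan}, and then argue that the truncation order $M$ in \eqref{eq:lap_corr_high_order} can be chosen large enough to make the tail harmless. Concretely, I would start from \eqref{eq:lap_corr_high_order}, which already writes the SLP integral as the punctured trapezoidal sum, plus an explicit $O(h^{M+2})$ remainder, plus a finite sum $\sum_{m=0}^M W_h^m[\tilde P]$ of Wigner pairs. Next I would apply Lemma \ref{lem:wigner-pair-expan} to each $W_h^m[\tilde P]$ with a suitable per-$m$ truncation: since the $m$-th pair only begins contributing at order $h^{2\lceil m/2+1\rceil-1}$, and by the last sentence of the lemma $W_h^m[\tilde P]$ contributes a $\Theta(h^{2k-1})$ term exactly when $0\le m\le 2k-2$, collecting the $\Theta(h^{2k-1})$ contributions across all $m$ gives precisely the coefficient $\tilde C_{2k-1}(h)=\sum_{m=0}^{2k-2}\tilde C^{(m)}_{2k-1}(h)$ in \eqref{thm:high-order;eq:err_coeff}.

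The bookkeeping step is choosing the parameters so the errors line up. Given the target $K$, I would take $M=2K-2$ in \eqref{eq:lap_corr_high_order} so that every $m$ with a $\Theta(h^{2k-1})$ contribution for some $k\le K$ is included, and so that the residual $O(h^{M+2})=O(h^{2K})$ term from the punctured rule applied to the $C^M_c$-regularized integrand is absorbed into $O(h^{2K+1})$ — here one should note that the trapezoidal error for a compactly supported $C^M$ function is actually $O(h^{M+1})$ or better, or more carefully that the leading odd-power terms are what matter, so a cleaner route is to take $M=2K-1$ and observe that the resulting $O(h^{2K+1})$ remainder is exactly of the advertised order. For each included $m$ I apply Lemma \ref{lem:wigner-pair-expan} with its own $K_m$ chosen so that $2K_m-1\ge 2K-1$, i.e. $K_m=K$ suffices uniformly, producing $W_h^m[\tilde P]=\sum_{k=\lceil m/2+1\rceil}^{K}\tilde C^{(m)}_{2k-1}(h)h^{2k-1}+O(h^{2K+1})$. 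Summing over $m=0,\dots,M$ and regrouping by powers of $h$, the coefficient of $h^{2k-1}$ is $\sum_{m:\,\lceil m/2+1\rceil\le k}\tilde C^{(m)}_{2k-1}(h)=\sum_{m=0}^{2k-2}\tilde C^{(m)}_{2k-1}(h)$, which is the claimed $\tilde C_{2k-1}(h)$; all leftover terms are $O(h^{2K+1})$.

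The only genuinely delicate point is the claim that replacing $\tilde P$ by the $C^M$-regularized integrand and applying the punctured trapezoidal rule yields an error of the stated order with no hidden odd-order surface terms — i.e. that the ``missing'' node at the puncture contributes nothing because the subtracted terms in \eqref{eq:lap_corr_high_order} already cancel the singularity to the required order, so the first-equality integrand in \eqref{eq:lap_corr_high_order} is genuinely $C^M_c$ and the classical Euler–Maclaurin/trapezoidal estimate for smooth compactly supported (equivalently periodic) functions applies verbatim. This is essentially the content of the ``second equality'' in \eqref{eq:lap_corr_high_order}, so for the theorem I would simply invoke it, together with the standard fact that the trapezoidal rule on a uniform grid is super-algebraically accurate for smooth periodic/compactly supported integrands, hence the residual is $O(h^{2K+1})$ for the chosen $M$. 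With those two inputs — the decomposition \eqref{eq:lap_corr_high_order} and Lemma \ref{lem:wigner-pair-expan} — the proof is a matter of collecting terms, and I would present it in exactly that order: state the decomposition, apply the lemma term by term, regroup by powers of $h$, and identify the coefficients with \eqref{thm:high-order;eq:err_coeff}.
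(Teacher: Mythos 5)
Your proposal is correct and follows essentially the same route as the paper: start from the decomposition \eqref{eq:lap_corr_high_order}, apply Lemma \ref{lem:wigner-pair-expan} to each Wigner pair (with the per-pair truncation at order $2K-1$), and regroup by powers of $h$ to identify the coefficients \eqref{thm:high-order;eq:err_coeff}. The only difference is in handling the trapezoidal residual: the paper fixes $M=2K-2$ and upgrades the resulting $O(h^{2K})$ term to $O(h^{2K+1})$ via the parity (odd integral/summation) argument from the lemma's proof, while your alternative choice $M=2K-1$ achieves the same order directly and is equally valid, since the extra Wigner pair $m=2K-1$ only contributes at order $h^{2K+1}$.
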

\begin{proof}
Equation \eqref{eq:lap_corr_high_order} can be written as
\begin{equation}
    \int_{|\uu|\leq a}\frac{\tilde{P}(\uu)}{r(\uu)}\,\d\uu =\sideset{}'\sum_{|\ii |\leq N}\frac{\tilde{P}(\ii h)}{r(\ii h)}h^2 + \sum_{m=0}^M W_h^m[\tilde{P}] + O(h^{M+2})
    \label{thm:high-order;eq:lap_corr_high_order}
\end{equation}
Then letting $M=2K-2$, substituting the Wigner pairs $W_h^m[\tilde{P}]$ by the estimate \eqref{lem:wigner-pair-expan;eq:wigner-type-series} and interchanging summation order yield \eqref{thm:high-order;eq:error_expan}; note that the error is $O(h^{2K+1})$ instead of $O(h^{2K})$ since the even powers of $h$ vanish due to the odd integrals/summations, as stated in the proof of Lemma \ref{lem:wigner-pair-expan}.
\qed\end{proof}

\subsection{Converged Wigner-type limits and derivatives of Epstein zeta}
\label{sc:high-order:converged_Wigner-type}

The Wigner-type limits $\tilde{C}^{(m)}_{2k-1}(h)$ as defined by \eqref{lem:wigner-pair-expan;eq:C} cannot be computed efficiently. However, using the similar ideas from Sections \ref{sc:slp_wigner} and \ref{sc:dlp_epstein_der}, we will show that the converged values $\lim_{h\to0}\tilde{C}^{(m)}_{2k-1}(h)$ can be efficiently evaluated as Epstein zeta function values or its parametric derivatives. Furthermore, we will show in Theorem \ref{thm:high-order-with-converged-wigner} that the error analysis in Theorem \ref{thm:high-order} remains valid when these zeta function approximations are used.

To further simplify notations, we denote
\begin{equation}
    \cL^m_h[f]:= \lim_{N\to\infty}\sideset{}'\sum_{|\ii |<N} \frac{ f(\ii )}{Q(\ii )^{m+\frac{1}{2}}}\eta(\ii h) - \int_{|\uu|<N+\frac{1}{2}}\frac{ f(\uu)}{Q(\uu)^{m+\frac{1}{2}}}\eta(\uu h)\,\d\uu
    \label{eq:unconverged_wigner-type}
\end{equation}
then \eqref{lem:wigner-pair-expan;eq:C} can be written as
\begin{equation}
    \tilde{C}^{(m)}_{2k-1}(h) = \binom{-\frac{1}{2}}{m}\sum_{\substack{r+s=2(m+k-1)\\r\geq3m,\,s\geq0}}-\cL^m_h\left[q^{(m)}_rp_s\right],\quad k\geq\left\lceil\frac{m}{2}+1\right\rceil.
    \label{eq:unconverged_wigner-type_err}
\end{equation}
Note that each term $q^{(m)}_rp_s$ is a homogeneous polynomial consisting of $\Theta(u^{2d-l}v^l)$ terms for some even integer $2d$ and for $l=0,1,\dots,2d$. Therefore $\tilde{C}^{(m)}_{2k-1}(h)$ is a linear combination of terms of the form $\cL^m_h\left[u^{2d-l}v^l\right]$, $\frac{3m}{2}\leq d\leq m+K-1$.

\begin{theorem}
    \label{thm:converged_wigner-type}
    Define the \emph{converged Wigner-type limits} as
    \begin{equation}
        \cL^m\left[u^{2d-l}v^l\right]:=\lim_{h\to0} \cL^m_h\left[u^{2d-l}v^l\right],\quad 0\leq l\leq d\in \NN, d\geq \frac{3m}{2}
    \end{equation}
    then
    \begin{equation}
    \cL^m\left[u^{2d-l}v^l\right]= 
     \begin{cases}
        \displaystyle \frac{1}{d! \binom{d-m-\frac{1}{2}}{d}}\left(\frac{\pd}{\pd E}\right)^{d-l} \left(\frac{1}{2}\frac{\pd}{\pd F}\right)^lZ\big(2(m-d)+1\big) & l\leq d\\[15pt]
        \displaystyle \frac{1}{d! \binom{d-m-\frac{1}{2}}{d}} \left(\frac{1}{2}\frac{\pd}{\pd F}\right)^{2d-l}\left(\frac{\pd}{\pd G}\right)^{l-d}Z\big(2(m-d)+1\big) & l>d
      \end{cases}
    \label{eq:wigner-type_epstein}
    \end{equation}
\end{theorem}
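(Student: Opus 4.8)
The plan is to reduce the $2d$-dimensional family of Wigner-type limits $\cL^m[u^{2d-l}v^l]$ to parametric derivatives of the single generating quantity $\cL^m[1]$, which by Theorem~\ref{thm:wigner_lim} (applied with exponent $s=2m+1$) equals $Z(2m+1)$ up to a known constant. The starting observation is the differentiation identity already used in Section~\ref{sc:dlp_epstein_der}: for any fixed $(u,v)$ and $(i,j)$,
\[
\left(\frac{\pd}{\pd E}\right)^{a}\left(\frac12\frac{\pd}{\pd F}\right)^{b}\left(\frac{\pd}{\pd G}\right)^{c}\frac{1}{Q(u,v)^{m+\frac12}}
= \frac{(-1)^{a+b+c}\,(m+\tfrac12)_{a+b+c}}{\,1\,}\cdot\frac{u^{2a+b}v^{b+2c}}{Q(u,v)^{m+\frac12+a+b+c}},
\]
where $(x)_n$ is the Pochhammer symbol; the monomial $u^{2d-l}v^l$ is produced by choosing $a,b,c\ge 0$ with $2a+b = 2d-l$, $b+2c = l$, hence $a+b+c = d$ and the prefactor is a fixed nonzero constant $(-1)^d (m+\tfrac12)_d = d!\binom{d-m-\frac12}{d}\cdot(\pm1)$ (matching the normalization in \eqref{eq:wigner-type_epstein}). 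The exponent in the denominator becomes $m+d+\tfrac12$, i.e.\ it corresponds to the Epstein-type sum at argument $s = 2(m+d)+1$ — but wait: the claimed right-hand side has $Z(2(m-d)+1)$, so the correct reading is that we differentiate the \emph{order-$m$} generating function and lower $m$ effectively; concretely one differentiates $Q^{-(m-d)-\frac12}$ a total of $d$ times to \emph{raise} the exponent to $m+\tfrac12$, which is why $Z(2(m-d)+1)$ appears. I would set this bookkeeping up carefully first, since getting the shift $m\mapsto m-d$ right is the only subtle algebraic point.

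Second, I would justify that the parametric derivative commutes with the conditionally convergent limit $\lim_{N\to\infty}$ defining $\cL$. The cleanest route is to invoke Theorem~\ref{thm:wigner_lim} (respectively Theorem~\ref{thm:wigner_lim_deriv}) directly: both say that for $0<\mathrm{Re}\,s<2$ the Wigner-type limit $\cL^{m'}_h[\text{monomial}]$ — in the $h\to 0$ form where $\eta(\ii h)\to 1$ pointwise and the integral cancels the divergence — equals the analytic continuation of the corresponding Epstein-type Dirichlet series, which for the monomial $u^{2a+b}v^{b+2c}$ over $Q^{-s/2}$ with $s$ in that strip is exactly $(\pd_E)^a(\frac12\pd_F)^b(\pd_G)^c Z(s)$ up to the Pochhammer constant, because the analytically continued $Z_A(s)$ is jointly smooth in $(E,F,G)$ on the positive-definite cone (it is holomorphic in $s\neq 2$ and real-analytic in the form, via the integral representation \eqref{eq:epstein_anal}) and differentiation under the integral sign in \eqref{eq:epstein_anal} is trivially valid there since the theta-type sums converge uniformly. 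So the identity $\cL^m[u^{2d-l}v^l] = \frac{1}{d!\binom{d-m-\frac12}{d}}(\pd_E)^{d-l}(\frac12\pd_F)^l Z(2(m-d)+1)$ for $l\le d$ follows by matching $(a,b,c) = (d-l,\,l,\,0)$, and the $l>d$ case by the symmetric choice $(a,b,c) = (0,\,2d-l,\,l-d)$, which gives the same monomial and the same constant; one should remark that when both are available (i.e.\ for intermediate $l$) the two expressions agree, which is a consistency check rather than an obstruction.

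Third — and this is the step I expect to be the real obstacle — I must check that the argument $2(m-d)+1$ lands in the strip $0<\mathrm{Re}\,s<2$ where the Wigner-limit/analytic-continuation identity of Theorems~\ref{thm:wigner_lim}–\ref{thm:wigner_lim_deriv} is available, or else handle the boundary/exterior cases separately. Here $d\ge \frac{3m}{2}$, so $m - d \le -m/2 \le 0$, giving $s = 2(m-d)+1 \le 1$; and since $d$ can be as large as $m+K-1$, $s$ can be very negative. Thus $s$ is generally \emph{outside} the strip $(0,2)$, so one cannot cite Theorem~\ref{thm:wigner_lim} verbatim. The resolution is that the relevant homogeneous-degree bookkeeping in Lemma~\ref{lem:wigner-pair-expan} shows the monomial $u^{2d-l}v^l$ is divided by $Q^{m+\frac12}$, i.e.\ the \emph{effective} summand $u^{2d-l}v^l Q^{-(m+\frac12)}$ has homogeneity $2d - (2m+1) = 2(d-m)-1$, which is $\ge -1$ precisely when $d\ge m$; for the Laplace/Helmholtz single- and double-layer applications one checks $d\ge m$ always holds in the relevant range (indeed $d\ge 3m/2\ge m$), so the summand decays like $|\ii|^{2(d-m)-1}\ge |\ii|^{-1}$, placing us in a regime where a Wigner-type limit with the integral subtracted still converges. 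I would therefore prove an auxiliary lemma — a mild generalization of Theorem~\ref{thm:wigner_lim} to homogeneous rational summands of any real homogeneity $>-2$ — stating that $\lim_{N\to\infty}\big(\sum'_{|\ii|<N} g(\ii) - \int_{|\uu|<N+\frac12} g(\uu)\,d\uu\big)$ exists for $g$ homogeneous of degree in $(-2,\infty)$ minus an even integer and equals the analytic continuation in the homogeneity parameter of the associated Dirichlet series; applied to $g = u^{2d-l}v^l Q^{-(m+\frac12)}$ (homogeneity $2(d-m)-1$, realized by continuation from $\mathrm{Re}$ of the exponent large) this yields exactly the derivative-of-$Z$ formula with the Pochhammer normalization, and the argument of $Z$ comes out as $2(m-d)+1$ by tracking that the $d$-fold differentiation raised the denominator exponent from $(m-d)+\frac12$ to $m+\frac12$. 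The even-power-of-$h$ terms vanishing, already noted in the proof of Lemma~\ref{lem:wigner-pair-expan}, guarantees the $\eta(\ii h)\to 1$ passage introduces no spurious odd-order contribution. Assembling these three ingredients gives \eqref{eq:wigner-type_epstein}.
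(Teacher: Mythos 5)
Your algebraic reduction is consistent with the paper: the choices $(a,b,c)=(d-l,l,0)$ and $(0,2d-l,l-d)$, the constant $d!\binom{d-m-\frac12}{d}$, and the observation that one must differentiate $Q^{-(m-d)-\frac12}$ a total of $d$ times (so that the zeta argument is $2(m-d)+1$) all match the computation in the paper's proof, and you correctly put your finger on the genuine difficulty: $s=2(m-d)+1\le 1$ generally lies outside the strip $0<\mathrm{Re}\,s<2$ of Theorem \ref{thm:wigner_lim}, so that theorem cannot be cited verbatim (and citing Theorem \ref{thm:wigner_lim_deriv} would be circular, since the paper deduces it from the present theorem).

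The gap is in your proposed resolution. The auxiliary lemma you rely on --- that for an even homogeneous $g$ of any degree $>-2$ the \emph{sharply} truncated quantity $\lim_{N\to\infty}\bigl(\sum_{0<|\ii|\le N} g(\ii)-\int_{|\uu|\le N+\frac12} g(\uu)\,\d\uu\bigr)$ exists and equals the analytic continuation of the associated Dirichlet series --- is false, not a mild generalization. Already the one-dimensional analogue shows both failure modes: for $g(x)=|x|$ (degree $1$, i.e.\ $s=-1$) one has $\sum_{0<|n|\le N}|n|-\int_{-N-\frac12}^{N+\frac12}|x|\,\d x=-\tfrac14$ for every $N$, whereas the analytic continuation gives $2\zeta(-1)=-\tfrac16$; for $g(x)=|x|^3$ the difference diverges like $-N^2/4$. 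The same Euler--Maclaurin boundary terms, now shape-dependent, arise for expanding squares in $\RR^2$, and your effective degrees $2(d-m)-1$ are exactly $1,3,5,\dots$, so for every $d\ge1$ the sharp-truncation limit either disagrees with $Z$ or does not exist. (Note also that a summand of homogeneity $\ge1$ \emph{grows}; it does not ``decay like $|\ii|^{-1}$.'') This is precisely why the paper never discards the mollifier: $\cL^m_h$ is defined with the smooth even cutoff $\eta(\cdot\,h)$ whose derivatives vanish at $\mathbf 0$; the identity $\lim_{h\to0}\cL^{(s-1)/2}_h[1]=Z(s)$ is first obtained in the strip from Theorem \ref{thm:wigner_lim}, then extended to all $s\neq2$ by analyticity in $s$, and the quantitative substance (existence of the $h\to0$ limit and the $O(h^{2K})$ rate for $s\le1$) comes from the Poisson-summation argument of Appendix \ref{sc:app:proof}, where the smoothness of $\eta$ makes the Fourier tail summable and its vanishing moments at the origin kill exactly the boundary/shape contributions that your sharp truncation reintroduces. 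Unless you replace your lemma by an argument of this type (or reprove the analytic-continuation-in-$s$ step for the $\eta$-regularized limits), the cases $d\ge1$ --- which are the entire content of the theorem beyond $Z(1)$ --- remain unproven; the rest of your outline (commuting the parametric derivatives with the limit, and the consistency of the two formulas for intermediate $l$) is fine.
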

\begin{proof}
Using Theorem \ref{thm:wigner_lim} and the fact that $\lim_{\uu\to\mathbf{0}}\eta(\uu)=1$, the following holds for all $0<\mathrm{Re}\,s<2$,
\begin{equation}
\begin{aligned}
\lim_{h\to0} \cL^{(s-1)/2}_h\left[1\right] &= \lim_{h\to0} \Bigg(\sideset{}'\sum_{|\ii |<\infty} \frac{ 1}{Q(\ii )^{\frac{s}{2}}}\eta(\ii h) - \int\limits_{|\uu|<\infty}\frac{ 1}{Q(\uu)^{\frac{s}{2}}}\eta(\uu h)\,\d\uu\Bigg)\\
& = \sideset{}'\sum_{|\ii |<\infty} \frac{ 1}{Q(\ii )^{\frac{s}{2}}} - \int\limits_{|\uu|<\infty}\frac{ 1}{Q(\uu)^{\frac{s}{2}}}\,\d\uu = Z(s)
\end{aligned}
\label{eq:limiting;thm:converged_wigner-type}
\end{equation}
Furthermore, note that both $\cL^{(s-1)/2}_h\left[1\right]$ and the zeta function $Z(s)$ are analytic functions of $s$ and $Z(s)$ is defined for all $s\in\CC$ (except at $s=2$), therefore
\begin{equation}
    Z(s) \equiv \lim_{h\to0} \cL^{(s-1)/2}_h\left[1\right]
    \label{eq:Z;thm:converged_wigner-type}
\end{equation}
holds for all $s\in\CC\setminus{\{2\}}$. In particular setting $s=1$ in \eqref{eq:limiting;thm:converged_wigner-type} proves \eqref{eq:wigner-type_epstein} for the case $m=d=0$. 

Next, consider $\cL^m_h\left[u^{2d-l}v^l\right]$ for $d>1$ and assume $l\leq d$. By \eqref{eq:Z;thm:converged_wigner-type}, we have
$$Z(s-2d) = \lim_{h\to0} \cL^{(s-2d-1)/2}_h\left[1\right],$$
then apply $\left(\frac{\pd}{\pd E}\right)^{d-l} \left(\frac{1}{2}\frac{\pd}{\pd F}\right)^l$ to both sides and take the parametric derivative under the limit sign yields
\begin{equation}
\begin{aligned}
    \left(\frac{\pd}{\pd E}\right)^{d-l} \left(\frac{1}{2}\frac{\pd}{\pd F}\right)^lZ(s-2d) &= \lim_{h\to0} \left(\frac{\pd}{\pd E}\right)^{d-l} \left(\frac{1}{2}\frac{\pd}{\pd F}\right)^l\,\cL^{(s-2d-1)/2}_h\left[1\right]\\
    &= \lim_{h\to0} d!\,\binom{d-\frac{s}{2}}{d}\,\cL^{(s-1)/2}_h\left[u^{2d-l}v^l\right]\\
    &=  d!\,\binom{d-\frac{s}{2}}{d}\, \cL^{(s-1)/2}\left[u^{2d-l}v^l\right]
\end{aligned}
\end{equation}
which by setting $s = 2m+1$ yields \eqref{eq:wigner-type_epstein}. The case where $l>d$ is proved similarly.
\qed
\end{proof}

Theorem \ref{thm:converged_wigner-type} is a generalization of Theorem \ref{thm:wigner_lim} to the parametric derivatives of the Epstein zeta function, which is the key for the efficient construction of high-order correction weights for the punctured trapezoidal rule. It allows us to approximate the error coefficients $\tilde{C}^{(m)}_{2k-1}(h)$ by their converged values:
\begin{equation}
    C^{(m)}_{2k-1} := \lim_{h\to0}\tilde{C}^{(m)}_{2k-1}(h) = \binom{-\frac{1}{2}}{m}\sum_{\substack{r+s=2(m+k-1)\\r\geq3m,\,s\geq0}}-\cL^m\left[q^{(m)}_rp_s\right]
    \label{eq:converged_wigner-type}
\end{equation}
which in turn can be efficiently evaluated as parametric derivatives of the Epstein zeta function $Z(s)$. Detailed formulae for such computation are included in Appendix \ref{sc:app:epstein_higher_deriv}.

Finally, we are ready to state the key analytical result of this paper:
\begin{theorem}
\label{thm:high-order-with-converged-wigner}
Let $\eta(\uu)\in C^{4K}_c([-a,a]^2)$ be as defined in \eqref{eq:bump_func} such that it is $4K$ times continuously differentiable. In addition, assume that
\begin{equation}
    \frac{\partial^{\alpha+\beta}}{\partial u^\alpha\partial v^\beta}\eta(\mathbf{0}) = 0,
\end{equation}
for all non-negative integers $\alpha$ and $\beta$ such that
\begin{equation}
    0<\alpha+\beta\leq 2K,
\end{equation}
i.e., all the derivatives of order up to $2K$ vanish at $\mathbf{0}$. Then replacing $\tilde{C}_{2k-1}(h)$ in \eqref{thm:high-order;eq:error_expan} by its converged value (using the definitions \eqref{thm:high-order;eq:err_coeff} and \eqref{eq:converged_wigner-type})
\begin{equation}
    C_{2k-1}:=\lim_{h\to0}\tilde{C}_{2k-1}(h)=\sum_{m=0}^{2k-2} C^{(m)}_{2k-1}
    \label{eq:converged_err_coeff}
\end{equation}
does not change the error estimate, that is,
\begin{equation}
    \int_{|\uu|\leq a}\frac{\tilde{P}(\uu)}{r(\uu)}\,\d\uu =\sideset{}'\sum_{|\ii |\leq N}\frac{\tilde{P}(\ii h)}{r(\ii h)}h^2 + \sum_{k=1}^K C_{2k-1}\,h^{2k-1} + O(h^{2K+1})
    \label{eq:error_expan_converged}
\end{equation}
\end{theorem}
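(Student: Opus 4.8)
The plan is to re-run the derivation leading to Theorem~\ref{thm:high-order}, but now tracking the difference between the exact error coefficients $\tilde{C}_{2k-1}(h)$ and their limits $C_{2k-1}$, and to show this difference is absorbed into the $O(h^{2K+1})$ remainder provided $\eta$ is chosen flat enough at $\mathbf{0}$. Concretely, by Theorem~\ref{thm:high-order} it suffices to prove that
\[
    \sum_{k=1}^{K}\bigl(\tilde{C}_{2k-1}(h)-C_{2k-1}\bigr)h^{2k-1} = O(h^{2K+1}),
\]
and by \eqref{thm:high-order;eq:err_coeff} and \eqref{eq:converged_err_coeff} this reduces to showing $\tilde{C}^{(m)}_{2k-1}(h)-C^{(m)}_{2k-1}=O(h^{2(K-k)+2})$ for each $m$ with $0\le m\le 2k-2$. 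By \eqref{eq:unconverged_wigner-type_err} and \eqref{eq:converged_wigner-type}, this in turn follows from the single estimate
\[
    \cL^m_h\!\left[u^{2d-l}v^l\right] - \cL^m\!\left[u^{2d-l}v^l\right] = O\bigl(h^{2(K-k)+2}\bigr)
\]
for each monomial appearing in $q^{(m)}_r p_s$ with $r+s=2(m+k-1)$, i.e. with $2d=r+s=2(m+k-1)$, so $2(K-k)+2 = 2(K-m-1-d+m)+2 = 2(K-d+m-1)+2$. Wait---let me recompute: with $2d$ the total degree of the monomial and $d=m+k-1$, we need the convergence rate to be $h^{2K-(2k-1)-(2k-1)}$... the clean bookkeeping is that the error term $\bigl(\tilde C^{(m)}_{2k-1}(h)-C^{(m)}_{2k-1}\bigr)h^{2k-1}$ must be $O(h^{2K+1})$, so the rate needed from the single-monomial estimate is $h^{2K+1-(2k-1)} = h^{2(K-k+1)}=h^{2(K-m-d+m)}$. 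I will carry out this index arithmetic carefully at the start of the proof so the required exponent is pinned down as a function of $d$ (equivalently of $m,k$).

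The heart of the matter is thus a quantitative rate-of-convergence statement for the Wigner-type limit $\cL^m_h[g]$ as $h\to0$, where $g$ is a homogeneous polynomial. The difference $\cL^m_h[g]-\cL^m[g]$ equals
\[
    \sum_{\ii\ne 0}\frac{g(\ii)}{Q(\ii)^{m+\frac12}}\bigl(\eta(\ii h)-1\bigr)
    \;-\;\int_{\RR^2}\frac{g(\uu)}{Q(\uu)^{m+\frac12}}\bigl(\eta(\uu h)-1\bigr)\,\d\uu ,
\]
after using that the difference of the ``fully converged'' sum-minus-integral with $\eta\equiv1$ equals $\cL^m[g]$ (this is where Theorem~\ref{thm:converged_wigner-type}, via \eqref{eq:Z;thm:converged_wigner-type}, is invoked; note the integral with $\eta\equiv1$ is conditionally convergent and is the analytic-continuation value). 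Since $\eta(\uu h)-1$ vanishes to order $2K$ at $\uu=\mathbf{0}$ by the flatness hypothesis, we have $|\eta(\uu h)-1|\lesssim \min\{1,(|\uu|h)^{2K+1}\}$, with the $C^{4K}_c$ regularity giving enough derivatives to make the Euler--Maclaurin comparison of the sum and the integral legitimate. Writing $\phi_h(\uu):=g(\uu)Q(\uu)^{-m-\frac12}(\eta(\uu h)-1)$, the quantity above is exactly $\mathrm{(punctured\ trapezoidal\ rule)} - \mathrm{(integral)}$ applied to $\phi_h$; since $\phi_h$ is smooth away from $0$, decays rapidly (compact support of $\eta$ minus a term linear in... actually $\eta(\uu h)-1 \to -1$ at infinity, so one splits: near $\mathbf 0$ use the vanishing; in the far field use compact support of $\eta$, leaving only the tail of $g\,Q^{-m-1/2}$, which for $2d < 2m$... here I will need the homogeneity degrees to ensure convergence, exactly as in the proof of Theorem~\ref{thm:wigner_lim}). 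The rate then comes from rescaling $\uu\mapsto\uu/h$: the near-origin contribution scales like $h^{\,2K+1 - (2d-2m-2+2)} = h^{2K+1-2d+2m}$ after accounting for the homogeneity degree $2d-(2m+1)$ of $g\,Q^{-m-1/2}$, the $\d\uu$ Jacobian, and the gain $2K+1$ from the flatness of $\eta(\uu h)-1$. Matching this against the required exponent $2(K-k+1)=2K-2d+2m+2$ shows the bound holds with one power of $h$ to spare, which is the desired conclusion.

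The main obstacle, and the step I would spend the most care on, is making the preceding rescaling-and-splitting argument rigorous for the Euler--Maclaurin-type difference of a sum and a conditionally convergent integral: one cannot naively write $\sum - \int = O(h^{p})$ for $\phi_h$ because $\phi_h$ is singular at the origin and, for the low values of $m$ relevant here, $g\,Q^{-m-1/2}$ is not absolutely integrable at infinity. The clean route is to reuse the machinery already established: decompose $\eta(\uu h)-1 = -\bigl(1-\eta(\uu h)\bigr)$ and note $1-\eta(\uu h)$ is $1$ outside a ball of radius $O(1/h)$ and vanishes to order $2K+1$ at the origin; write $\cL^m_h[g]-\cL^m[g]$ as a telescoping difference of two ``$\cL$-type'' functionals both of which fall under the scope of Theorem~\ref{thm:converged_wigner-type} / Theorem~\ref{thm:wigner_lim} (so the conditionally convergent pieces are controlled analytically, not by brute force), and only the genuinely smooth, compactly-supported remainder $g(\uu)Q(\uu)^{-m-1/2}(1-\eta(\uu h))$ restricted near the origin needs the elementary trapezoidal-rule error bound on a $C^{4K}_c$ function, which gives $O(h^{4K+2})$ before the homogeneity rescaling and hence more than enough after it. I would also double-check the edge case $d=\lceil m/2+1\rceil$ (the lowest-order Wigner pair contributing at level $k$), where the exponent margin is tightest, and confirm that $4K$ derivatives on $\eta$ — rather than $2K$ — is exactly what the trapezoidal error bound consumes in this final step.
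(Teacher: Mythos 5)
Your reduction step is the same as the paper's: by the triangle inequality it suffices to show that each $\big|\tilde C^{(m)}_{2k-1}(h)-C^{(m)}_{2k-1}\big|$, i.e.\ each $\big|\cL^m_h[u^{2d-l}v^l]-\cL^m[u^{2d-l}v^l]\big|$, decays fast enough in $h$ (the paper proves the uniform rate $O(h^{2K})$, which combined with the prefactor $h^{2k-1}\le h$ gives $O(h^{2K+1})$). The genuine gap is that you never actually prove this quantitative rate, and the route you sketch for it does not work as stated. You rewrite $\cL^m_h[g]-\cL^m[g]$ by identifying $\cL^m[g]$ with the naive sum-minus-integral of $g\,Q^{-m-1/2}$ with $\eta\equiv1$. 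That identification is not available: Theorem~\ref{thm:wigner_lim} covers only the strip $0<\mathrm{Re}\,s<2$, whereas the monomials relevant for $k\ge2$ correspond to $s=2(m-d)+1\le-1$, where the unregularized Wigner limit is not asserted to exist, let alone to equal the analytic continuation; Theorem~\ref{thm:converged_wigner-type} defines $\cL^m[\cdot]$ only through the $\eta$-regularized limits. Moreover, the resulting ``remainder'' $g(\uu)Q(\uu)^{-m-\frac12}\big(1-\eta(\uu h)\big)$ is neither compactly supported (it equals $g\,Q^{-m-1/2}$ for $|\uu|>a/h$ and grows polynomially) nor smooth at the origin with $h$-uniform bounds of the needed order, so the ``elementary trapezoidal-rule error bound giving $O(h^{4K+2})$'' is not applicable; the far-field sum-minus-integral is exactly the conditionally convergent object that has to be controlled, and the theorems you invoke give only qualitative convergence, never a rate. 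Your own exponent bookkeeping in the rescaling step is also left unresolved (``Wait---let me recompute'').

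For comparison, the paper closes precisely this gap in Appendix~\ref{sc:app:proof}: Theorem~\ref{thm:wigner-type_converge_rate_1} splits $\cL^{(s-1)/2}_h[1]$ with a smooth cutoff $\phi$, applies the Poisson summation formula to the compactly supported piece $\eta(\uu h)\phi(\uu)Q(\uu)^{-s/2}$, integrates by parts $\beta=4K$ times to obtain $O(k^{-\beta})$ decay of the Fourier coefficients (this, not a trapezoidal bound, is what consumes the $C^{4K}$ regularity), and uses the flatness hypothesis through $|\eta^{(\ell)}(\uu h)-\delta_{0,\ell}|\lesssim|\uu h|^{2K+1-\ell}$ on carefully split regions $\tfrac12\le|\uu|\le1$, $1<|\uu|\le a/h$, $|\uu|>a/h$; the $h$-independent limits $Z_1(s)-Z_2(s)$ are then identified with $Z(s)$ only a posteriori, using the already known qualitative limit, and parametric differentiation extends the rate to the monomial cases (Theorem~\ref{thm:wigner-type_converge_rate}). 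Some substitute for this Fourier-analytic (or an equally careful Euler--Maclaurin) argument, handling both the origin and the conditionally convergent far field with explicit rates, is the missing core of your proposal.
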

\begin{proof}
Applying Theorem \ref{thm:wigner-type_converge_rate} in Appendix \ref{sc:app:proof} to each term of \eqref{eq:unconverged_wigner-type_err} yields
$$
\big|C^{(m)}_{2k-1} -\tilde{C}^{(m)}_{2k-1}(h)\big| = O(h^{2K})
$$
for all $0\leq m < 2k-1 \leq 2K-1$. Then using the definition \eqref{thm:high-order;eq:err_coeff} yields
\begin{align*}
    &\big|C_{2k-1}-\tilde{C}_{2k-1}(h)\big| \leq \sum_{m=0}^{2k-2} \big|C^{(m)}_{2k-1} -\tilde{C}^{(m)}_{2k-1}(h)\big| = O(h^{2K})
\end{align*}
for all $1\leq k \leq K$, which implies that
\begin{align*}
    \Bigg|\int_{|\uu|\leq a}&\frac{\tilde{P}(\uu)}{r(\uu)}\,\d\uu -\sideset{}'\sum_{|\ii |\leq N}\frac{\tilde{P}(\ii h)}{r(\ii h)}h^2 - \sum_{k=1}^K C_{2k-1}\,h^{2k-1} \Bigg| \\
    \leq &\Bigg|\int_{|\uu|\leq a}\frac{\tilde{P}(\uu)}{r(\uu)}\,\d\uu -\sideset{}'\sum_{|\ii |\leq N}\frac{\tilde{P}(\ii h)}{r(\ii h)}h^2 - \sum_{k=1}^K \tilde{C}_{2k-1}(h)\,h^{2k-1}\Bigg| \\
    &+\sum_{k=1}^K \big|C_{2k-1}-\tilde{C}_{2k-1}(h)\big|\,h^{2k-1}\\
    \leq & O(h^{2K+1}) + h\,\sum_{k=1}^K \big|C_{2k-1}-\tilde{C}_{2k-1}(h)\big|\\
    =& O(h^{2K+1})
\end{align*}
where the second inequality used the estimate \eqref{thm:high-order;eq:error_expan}.
\qed\end{proof}

To summarize the analysis in this and the previous sections, the punctured trapezoidal rule has an error expansion \eqref{eq:error_expan_converged} where the coefficients $C_{2k-1}$ are given by (combining \eqref{eq:converged_wigner-type} and \eqref{eq:converged_err_coeff}):
\begin{equation}
\boxed{
    C_{2k-1} = \sum_{m=0}^{2k-2} C^{(m)}_{2k-1} = \sum_{m=0}^{2k-2}-\binom{-\frac{1}{2}}{m}\cL^m\Bigg[\sum_{\substack{r+s=2(m+k-1)\\r\geq3m,\,s\geq0}}q^{(m)}_rp_s\Bigg]}
    \label{eq:converged_wigner-type_coeff}
\end{equation}
where $q^{(m)}_r$ and $p_s$ are terms in the expansions (\ref{eq:(r2-Q)^m}--\ref{eq:P_series}). The converged Wigner-type limits $\cL^m[\cdot]$ can be efficiently computed using the zeta function connection \eqref{eq:wigner-type_epstein} (Theorem \ref{thm:converged_wigner-type}), and the use of these limits in the error analysis for intermediate $h$ values is justified by Theorem \ref{thm:high-order-with-converged-wigner}.

\paragraph{Error components associated with the $O(h^5)$ quadrature.}

As an application, we show how the error components associated with the $O(h^5)$ corrected quadrature can be computed.

According to Theorem \ref{thm:high-order}, a $O(h^5)$ trapezoidal rule for the SLP requires correcting the $\Theta(h)$ and $\Theta(h^3)$ errors underlying the first three Wigner pairs. By Theorem \ref{thm:high-order-with-converged-wigner}, the error coefficients can be approximated by the converged Wigner-type limits  \eqref{eq:converged_wigner-type_coeff}, given by
$$
\begin{aligned}
C_1 &= C_1^{(0)} = -\cL^0[q_0^{(0)}p_0] \\
C_3 &= C_3^{(0)}+C_3^{(1)}+C_3^{(2)} = -\cL^0[q_0^{(0)}p_2]-\left(-\frac{1}{2}\right)\cL^1[q_4^{(1)}p_0+q_3^{(1)}p_1]-\frac{3}{8}\cL^2[q_6^{(2)}p_0].
\end{aligned}
$$
Based on the definition \eqref{eq:unconverged_wigner-type} of $\cL^m_h[\cdot]$, we define the notation
\begin{equation}
    \cL[f]:= \lim_{h\to0} \lim_{N\to\infty}\left(\sideset{}'\sum_{|\ii |<N}  f(\ii )\,\eta(\ii h) - \int_{|\uu|<N+\frac{1}{2}} f(\uu)\,\eta(\uu h)\,\d\uu\right),
    \label{def:wigner-type_notation}
\end{equation}
then $C_1$ and $C_3$ can be written as converged Wigner-type limits:
\begin{equation}
    \begin{aligned}
    C_1 &= -\cL[\cW^{(1)}(\uu)p_0(\uu)]\\
    C_3 &= -\cL[\cW^{(1)}(\uu)p_2(\uu)]-\cL[\cW^{(2)}(\uu)p_1(\uu)]-\cL[\cW^{(3)}(\uu)p_0(\uu)]
    \end{aligned}
    \label{eq:converged_wigner-type_err_O(h^5)}
\end{equation}
where we have re-grouped the terms in $C_3^{(0)}, C_3^{(1)}$ and $C_3^{(2)}$ by $p_k=\Theta(\uu^k)$, and where the functions $\cW^{(m)}(\uu)$ are defined as
\begin{align}
\cW^{(1)}(\uu) &:=\frac{q_0^{(0)}}{\sqrt{Q}}=\frac{1}{\sqrt{Q}},
\label{eq:w1_wigner-type}\\
\cW^{(2)}(\uu) &:= -\frac{1}{2}\frac{q_3^{(1)}}{Q^{\frac{3}{2}}},
\label{eq:w2_wigner-type}\\
\cW^{(3)}(\uu) &:= -\frac{1}{2}\frac{q_4^{(1)}}{Q^{\frac{3}{2}}}+\frac{3}{8}\frac{q_6^{(2)}}{Q^{\frac{5}{2}}},
\label{eq:w3_wigner-type}
\end{align}
which only depend on the SLP kernel and the geometry (also see Table \ref{tbl:slp_singularity_summary} for a systematic picture). To summarize, we define the following Wigner-type limits that will be useful for computing the correction weights in the next section.
\begin{equation}
\begin{aligned}
D_0 &= -\cL[\cW^{(1)}(\uu)]\,h - \cL[\cW^{(3)}(\uu)]\,h^3 = \cC_0\,h+\cC_{01}h^3=-Z(1)\,h+\cC_{01}h^3,\\
D_1 &= -\cL[\cW^{(2)}(\uu)\,u]\,h^2 = \cC_1h^2,\\
D_2 &= -\cL[\cW^{(2)}(\uu)\,v]\,h^2 = \cC_2h^2,\\
D_3 &= -\cL[\cW^{(1)}(\uu)\,u^2]\,h = \cC_3\,h\equiv -2\frac{\partial}{\partial E} Z(-1)\,h,\\
D_4 &= -\cL[\cW^{(1)}(\uu)\,v^2]\,h = \cC_4\,h\equiv -2\frac{\partial}{\partial G} Z(-1)\,h,\\
D_5 &= -\cL[\cW^{(1)}(\uu)\,u\,v]\,h = \cC_5\,h\equiv -\frac{\partial}{\partial F} Z(-1)h.
\end{aligned}
\label{eq:D0-D5_lap_slp}
\end{equation}
A number of remarks are in order. Firstly, these converged Wigner-type limits correspond to \eqref{eq:converged_wigner-type_err_O(h^5)}, where $p_0,p_1,p_2,\dots$ are replaced by the corresponding monomials so that they serve as a basis for smooth functions. 
Secondly, each Wigner-type limit associated with $\cW^{(m)}$ is multiplied by $h^{m}$, which corresponds to the right-hand side of the moment equations \eqref{eq:wei_moment_eqs} after cancelling  $h^{\alpha+\beta}$ on both sides; this will prove convenient when we later construct the correction weights. 
Finally, the fact that the constants $\cC_0,\cC_3,\cC_4,\cC_5$ are given by $Z(s)$ and its derivatives is an application of Theorem \ref{thm:converged_wigner-type}; $\cC_1,\cC_2$ and $\cC_{01}$ have similar expressions but involve higher derivatives, thus we leave the formulae in Appendix \ref{sc:app:slp_wigner-type}. Formulae for evaluating the zeta function $Z(s)$ and its derivatives are included in Appendix \ref{sc:app:epstein_higher_deriv}.

Table \ref{tbl:slp_singularity_summary} presents a systematic approach to identify the functions that are necessary for defining $D_0$--$D_5$ of \eqref{eq:D0-D5_lap_slp}. (Table \ref{tbl:dlp_singularity_summary} in the Appendix is a similar table for the Laplace DLP.)

\begin{table}[htbp]
\centering
\begin{tabular}{l | lll | l}
$\frac{1}{r}$ & $\frac{1}{\sqrt{Q}}$ &  $-\frac{1}{2}\frac{r^2-Q}{\sqrt{Q}^3}$ & $\frac{3}{8}\frac{(r^2-Q)^2}{\sqrt{Q}^5}$ & Density\\[10pt]
\hline
$\cW^{(1)}$ & $\frac{1}{\sqrt{Q}}$ &  &  & $O(\uu^2),O(\uu^0)$\\[10pt]
$\cW^{(2)}$ & 0 & $-\frac{\,q^{(1)}_3}{2\sqrt{Q}^3}$  &  & $O(\uu^1)$\\[10pt]
$\cW^{(3)}$ & 0 & $-\frac{\,q^{(1)}_4}{2\sqrt{Q}^3}$  & $\frac{3\,q^{(2)}_6 }{8\sqrt{Q}^5}$  & $O(\uu^0)$
\end{tabular}
\caption{Summary of the singularity information in the Laplace SLP kernel. $q_k^{(m)}$ are as defined in \eqref{eq:(r2-Q)^m}. $\cW^{(i)}$ denotes the sum of the singular functions in the same row; when $\cW^{(i)}$ is multiplied by a $\Theta(\uu^j)$ component of the density function, the corresponding Wigner-type limit is $O(h^{i+j})$. For a $O(h^{2K+1})$ corrected trapezoidal rule, the Wigner-type limits associated with $\cW^{(i)}\cdot \Theta(\uu^j)$ for $i+j\leq 2K-1$, $i+j$ odd, account for all the errors that require correction. Right column of the table lists all such $O(\uu^j)$ density function components associated with $\cW^{(i)}$ for $K=2$.}\label{tbl:slp_singularity_summary}
\end{table}

In general, the problem of extracting the error components for the construction of a $O(h^{2K+1})$ corrected trapezoidal rule can be broken down into 3 steps: 
\begin{enumerate}[1)]
    \item Expand the numerators of the first $2K-1$ Wigner pairs in \eqref{eq:lap_corr_high_order}.
    \item Based on (\ref{eq:error_expan_converged},~\ref{eq:converged_wigner-type_coeff}), extract the functions $\cW^{(m)}(\uu)$ involved in the $\Theta(h^{2k-1})$ errors for $k=1,\dots,K$. (Table \ref{tbl:slp_singularity_summary} [or Table \ref{tbl:dlp_singularity_summary} in Appendix \ref{sc:app:dlp_corr_h5}] presents a systematic way of doing this.)
    \item Compute the error components as converged Wigner-type limits similar to \eqref{eq:D0-D5_lap_slp}, which are efficiently evaluated as appropriate Epstein zeta function values and its parametric derivatives according to Theorem \ref{thm:converged_wigner-type}.
\end{enumerate}

\begin{remark}
\label{rmk:larger_stencil}
The fact that generally $D_5\neq0$ is in stark contrast to the flat-space trapezoidal rules in $\RR^2$, such as those in \cite{duan2009high} and \cite{marin2014corrected} where the analogous quantities always vanish. Consequently, a larger stencil will be required on a curved surface to accommodate these extra degrees-of-freedom. See Figure \ref{fig:stencil}.
\end{remark}

\begin{remark}
Helsing in \cite{helsing2013higher} designed a panel-based, singularity-subtraction quadrature method for the Laplace DLP. An expansion analogous to \eqref{eq:rinv_expan_in_1st_fund_form} (i.e., \eqref{eq:dlp_expan_in_1st_fund_form} in Appendix \ref{sc:app:dlp_corr_h5}) is used, where the smooth terms such as $(r^2-Q)^m$ are interpolated as a polynomial $P$ on each panel, then the singular functions ${P}/{Q^{m+\frac{1}{2}}}$ are integrated analytically and recursively. The numerical example in \cite{helsing2013higher} used only the first 2 terms in the singular expansion, which would only give a $O(h^3)$ scheme, but due to the dominant interpolation error when $h$ is large, initially $O(h^{10})$ convergence was observed before the kernel's $O(h^3)$ error finally dominates; in contrast, our scheme is seeing this true $O(h^3)$ error from the beginning using only one term in the singular expansion (see Figure \ref{fig:convergence_patch}).
\end{remark}

\subsection{Computing the local correction weights of high-order quadratures}
\label{sc:high-order:weights}

To construct a $O(h^{2K+1})$ quadrature via local correction, we first define a stencil that corresponds to the quadrature points in the vicinity of the singularity. Let $U_K, K\geq1,$ be the stencil for the $O(h^{2K+1})$ quadrature on a curved surface, and also define $U^\mathrm{flat}_K$ to be the corresponding stencil in the flat space $\RR^2$ (see Remark \ref{rmk:larger_stencil} above), such that
\begin{align}
    U_K &:= \{(\mu,\nu)\in\ZZ^2\,:\,|\mu|+|\nu|\leq K, \max\{|\mu|,|\nu|\} < K\} \label{eq:stencil_curved}\\
    U^\mathrm{flat}_K &:= \{(\mu,\nu)\in\ZZ^2\,:\,|\mu|+|\nu|\leq K-1\} \label{eq:stencil_flat}
\end{align}
The stencils $U^\mathrm{flat}_K$ are used in \cite{duan2009high,marin2014corrected} to construct quadratures on $\RR^2$. See Figure \ref{fig:stencil} for a comparison between $U_K$ and $U^\mathrm{flat}_K$. Thus 
\begin{align}
    |U_K|&=2K(K+1)-3\\
    |U^\mathrm{flat}_K|&=2K(K-1)+1
\end{align}
Notice the relationship:
$$U_K = \{(\mu,\nu)\in U^\mathrm{flat}_{K+1}\,:\,|\mu|,|\nu| < k\}$$
that is, $U_K$ is $U^\mathrm{flat}_{K+1}$ excluding the 4 points farthest away from $(0,0)$.

\begin{figure}[htbp]
    \centering
    \includegraphics[width=\textwidth]{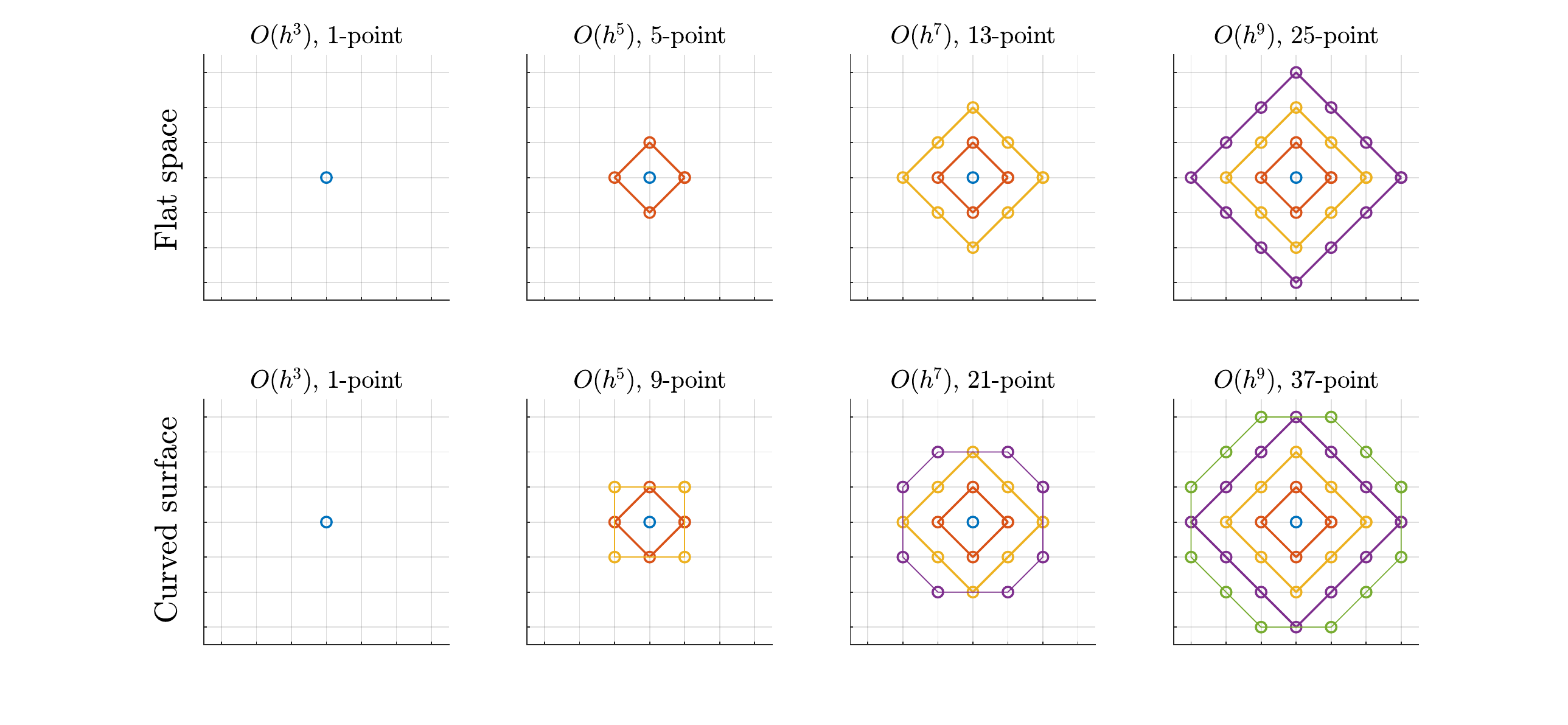}
    \caption{Comparison of $U^\mathrm{flat}_K$ (top row), the correction stencils associated with the flat space $\RR^2$, and $U_K$ (bottom row), stencils associated with the parameter space of a curved surface, $K=1,2,3,4$; see the definitions (\ref{eq:stencil_curved},~\ref{eq:stencil_flat}). The order, $O(h^{2K+1})$, of the associated corrected quadrature for the $\frac{1}{r}$ type singularity is shown for each stencil. The colors are meant for comparison of how the stencil grows. Compared to $U^\mathrm{flat}_K$, the extra points needed by $U_K$ are due to the extra degrees-of-freedom required by the quantities such as $D_5$ in \eqref{eq:D0-D5_lap_slp}. }
    \label{fig:stencil}
\end{figure}

Using the idea of moment fitting (e.g., see \cite{wissmann1986partially}), we impose the following conditions on the unknown correction weights $\tau_{\mu,\nu}$, $(\mu,\nu)\in U_k$.
\begin{enumerate}[(1)]
    \item The moment equations
        \begin{equation}
        \begin{aligned}
        \sum_{(\mu,\nu)\in U_K}(\mu h)^{\alpha}(\nu h)^{\beta}\tau_{\mu,\nu} =&  \sum_{m\in M(\alpha,\beta)}-\cL\big[\cW^{(m)}(\uu){u^{\alpha}v^{\beta}}\big]h^{m+\alpha+\beta},\\
        M(\alpha,\beta):=&\{m\in\NN\,:\,m+\alpha+\beta = 2k-1, 1\leq k\leq K\}.
        \end{aligned}
        \label{eq:wei_moment_eqs}
        \end{equation}
        for $\alpha,\beta\geq0$ and $0\leq\alpha+\beta\leq2(K-1)$. The functions $\cW^{(m)}(\uu)u^{\alpha}v^{\beta}$ are identified systematically based on the singular kernel (e.g., see Tables \ref{tbl:slp_singularity_summary} or \ref{tbl:dlp_singularity_summary}). For conceptual understanding, here we have kept the common factor $h^{\alpha+\beta}$ on both sides of \eqref{eq:wei_moment_eqs} that associates with the monomial $u^\alpha v^\beta$, in practice they should be cancelled to simply the equations, as done in \eqref{eq:corr_weights_system}. There are $K(2K-1)$ equations in \eqref{eq:wei_moment_eqs}.
    \item The symmetric conditions
        \begin{equation}
            \tau_{\mu,\nu} = \tau_{-\mu,-\nu},\quad \text{for all }\tau_{\mu,\nu}\in U_K\setminus U^\mathrm{flat}_K
            \label{eq:wei_sym_cond}
        \end{equation}
        This gives another $2(K-1)$ equations.
    \item The normalizing conditions
        \begin{equation}
            \tau_{\mu,\nu} +  \tau_{-\mu,\nu} + \tau_{\mu,-\nu}+ \tau_{-\mu,-\nu} = 0,\quad \text{for all }\tau_{\mu,\nu}\in U_K\setminus U^\mathrm{flat}_K
        \label{eq:wei_normalize_cond}
        \end{equation}
         This gives another $(K-1)$ equations.
\end{enumerate}
Together, (\ref{eq:wei_moment_eqs}--\ref{eq:wei_normalize_cond}) have $2K(K+1)-3$ equations which match the number of unknowns $|U_K|$. 

For an arbitrary $K>0$, we do not have a proof that the square system (\ref{eq:wei_moment_eqs}--\ref{eq:wei_normalize_cond}) is non-singular. However, for a given $K$ in practice, this can be easily verified using computer algebra packages such as Mathematica. In particular, we have:

\begin{theorem}
For $1\leq K\leq5$, a $O(h^{2K+1})$ corrected trapezoidal rule for the Laplace SLP is given by
\begin{equation}
\int_{|\uu|\leq a}\frac{P(\uu)}{r(\uu)}\d\uu = \sideset{}'\sum_{|\ii |\leq N}\frac{P(\ii h)}{r(\ii h)} + \sum_{(\mu,\nu)\in U_K}P(\mu h,\nu h)\tau_{\mu,\nu} +O(h^{2K+1})
\label{eq:corr_trap_rule_slp}
\end{equation}
where the correction weights $\tau_{\mu,\nu}$ must satisfy the conditions (\ref{eq:wei_moment_eqs}--\ref{eq:wei_normalize_cond}).
\end{theorem}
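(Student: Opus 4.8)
The plan is to reduce the theorem to the error analysis already established in Theorem~\ref{thm:high-order-with-converged-wigner} together with the linear-algebraic construction of the weights from the moment fitting conditions. First I would write the target quadrature error
$$
\cE_h := \int_{|\uu|\leq a}\frac{P(\uu)}{r(\uu)}\,\d\uu - \sideset{}'\sum_{|\ii|\leq N}\frac{P(\ii h)}{r(\ii h)}h^2 - \sum_{(\mu,\nu)\in U_K}P(\mu h,\nu h)\tau_{\mu,\nu}
$$
and observe that by Theorem~\ref{thm:high-order-with-converged-wigner} the difference of the first two terms equals $\sum_{k=1}^{K} C_{2k-1}h^{2k-1}+O(h^{2K+1})$, where $C_{2k-1}$ is the converged error coefficient expressed through the Wigner-type limits $\cL[\cW^{(m)}(\uu)u^\alpha v^\beta]$. (To invoke that theorem one replaces $P$ by $\tilde P = P\eta$ with a bump function $\eta$ satisfying the vanishing-derivative hypotheses of the theorem; since $\eta\equiv 1$ near $\mathbf 0$ and both the punctured sum and the correction stencil are supported near $\mathbf 0$, the substitution changes neither side by more than $O(h^{2K+1})$, so it is harmless.) Thus it suffices to show that the correction sum $\sum_{(\mu,\nu)\in U_K}P(\mu h,\nu h)\tau_{\mu,\nu}$ reproduces $\sum_{k=1}^{K}C_{2k-1}h^{2k-1}$ up to $O(h^{2K+1})$.

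Next I would Taylor-expand $P$ about $\mathbf 0$: writing $P(\uu) = \sum_{\alpha+\beta\leq 2(K-1)} c_{\alpha\beta}\,u^\alpha v^\beta + O(\uu^{2K-1})$, the correction sum becomes $\sum_{\alpha,\beta} c_{\alpha\beta}\,h^{\alpha+\beta}\sum_{(\mu,\nu)\in U_K}\mu^\alpha\nu^\beta\,\tau_{\mu,\nu}$ plus a tail that is $O(h^{2K+1})$ because $|U_K|$ is finite, the weights are $O(1)$ in $h$, and the stencil points $(\mu h,\nu h)$ lie within $O(h)$ of the origin. On the other side, grouping the expansion \eqref{eq:converged_wigner-type_coeff} of $C_{2k-1}$ by the monomial $u^\alpha v^\beta$ in $P$ (exactly the regrouping already displayed for $C_1,C_3$ in \eqref{eq:converged_wigner-type_err_O(h^5)} and \eqref{eq:D0-D5_lap_slp}) gives
$$
\sum_{k=1}^{K} C_{2k-1}h^{2k-1} = \sum_{\alpha+\beta\leq 2(K-1)} c_{\alpha\beta}\sum_{m\in M(\alpha,\beta)}\bigl(-\cL[\cW^{(m)}(\uu)u^\alpha v^\beta]\bigr)h^{m+\alpha+\beta} + O(h^{2K+1}),
$$
with $M(\alpha,\beta)$ as in \eqref{eq:wei_moment_eqs}. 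Matching the two expressions coefficient-by-coefficient in the independent quantities $c_{\alpha\beta}$ shows that the identity $\cE_h = O(h^{2K+1})$ holds for every smooth $P$ precisely when the weights obey the moment equations \eqref{eq:wei_moment_eqs}. Hence any solution of \eqref{eq:wei_moment_eqs} already yields an $O(h^{2K+1})$ rule; the extra symmetry and normalization conditions \eqref{eq:wei_sym_cond}--\eqref{eq:wei_normalize_cond} merely pin down a unique, well-conditioned choice among the solutions and are compatible with \eqref{eq:wei_moment_eqs} (they respect the even/odd parity structure of the Wigner-type limits, since $\cL[\cW^{(m)}u^\alpha v^\beta]$ vanishes unless $m+\alpha+\beta$ is odd, matching the parity built into the stencil $U_K$).

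The last ingredient is solvability: one must know the square system \eqref{eq:wei_moment_eqs}--\eqref{eq:wei_normalize_cond} is nonsingular so that the weights $\tau_{\mu,\nu}$ actually exist. For a general $K$ this is exactly the point the authors flag as unproven, but for the finitely many cases $1\leq K\leq 5$ it can be checked directly: assemble the $(2K(K+1)-3)\times(2K(K+1)-3)$ coefficient matrix (its entries are integer powers $\mu^\alpha\nu^\beta$ together with $\pm1$'s from the symmetry/normalization rows, all independent of $h$) and verify by exact rational arithmetic — e.g.\ in Mathematica — that its determinant is nonzero for each such $K$. I expect this verification, rather than the asymptotic matching, to be the real content of the proof: the matching is a bookkeeping exercise once Theorems~\ref{thm:high-order} and \ref{thm:high-order-with-converged-wigner} are in hand, whereas establishing nonsingularity of the moment system is the step that genuinely requires the case-by-case computation and is the obstruction to stating the result for all $K$.
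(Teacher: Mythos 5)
Your overall route is the same as the paper's: invoke the error expansion of Theorem~\ref{thm:high-order-with-converged-wigner}, observe that the moment-fitting conditions \eqref{eq:wei_moment_eqs} make the stencil sum reproduce the error terms $\sum_{k\le K}C_{2k-1}h^{2k-1}$, and dispose of existence of the weights by checking nonsingularity of the square system (\ref{eq:wei_moment_eqs}--\ref{eq:wei_normalize_cond}) case by case for $K\le 5$ in exact arithmetic. That last point is indeed all the paper's proof records (a table of determinants plus one sentence), so your assessment of where the real content lies is accurate, and your handling of the cutoff $\eta$ is fixable boilerplate (the difference $P-\tilde P$ times $1/r$ is smooth and compactly supported, so swapping $P$ for $\tilde P$ in the integral-minus-sum costs only a superalgebraically small trapezoidal error; your stated reason, that the punctured sum is ``supported near $\mathbf 0$,'' is not the right one).

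There is, however, a genuine flaw in your bookkeeping step. The weights are \emph{not} $O(1)$ in $h$: by \eqref{eq:wei_moment_eqs} (equivalently \eqref{eq:corr_weights_system}--\eqref{eq:9pt_weights}) they scale like $\tau_{\mu,\nu}=O(h)$, with higher-order parts $O(h^2),O(h^3),\dots$. Consequently, truncating the Taylor expansion of $P$ at degree $2(K-1)$ leaves a remainder of size $O(h^{2K-1})$ at the stencil points, and your estimate of the neglected tail gives at best $O(h^{2K})$ (and with your stated $O(1)$ weights only $O(h^{2K-1})$), one order short of the claim. The order is rescued only by a parity argument that you never make: the $\Theta(h^{j})$ component of the weight vector is centrally symmetric in $(\mu,\nu)$ for odd $j$ and antisymmetric for even $j$ (this follows from the central symmetry of $U_K$, the fact that the moment right-hand sides $\cL[\cW^{(m)}u^{\alpha}v^{\beta}]$ vanish unless $m+\alpha+\beta$ is odd, and the nonsingularity of the system, which forces the unique solution to inherit the symmetry), so the odd-degree remainder terms of $P$ are annihilated at leading order and the uncontrolled degree-$(2K-1)$ and degree-$2K$ contributions are in fact $O(h^{2K+1})$. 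Without this (or an equivalent argument extending the Taylor expansion one degree further and controlling the unconstrained odd moments of the weights), the matching step as you wrote it does not deliver the advertised $O(h^{2K+1})$.
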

\begin{proof}
For $K=1,\dots,5$, it can be verified that the matrix determinants associated with the system (\ref{eq:wei_moment_eqs}--\ref{eq:wei_normalize_cond}) are given by
\begin{center}
\def\arraystretch{1.2}
\begin{tabular}{c|c|c|c|c|c}
& $K=1$ & $K=2$ & $K=3$ & $K=4$ & $K=5$\\
\hline
determinant & $1$ & $2^5$ & $2^{22}3^6$ & $2^{65}3^{22}5^6$ & $2^{146}3^{54}5^{20}7^6$
\end{tabular}
\end{center}
therefore the weights $\tau_{\mu,\nu}$ are well defined. The moment fitting conditions \eqref{eq:wei_moment_eqs} guarantee that the correction weights fit the error components in the expansion \eqref{eq:error_expan_converged} up to (not including) the order $O(h^{2K+1})$, which implies \eqref{eq:corr_trap_rule_slp}.
\qed\end{proof}

\paragraph{Correction weights of the $O(h^5)$ quadrature.} As an application we continue to construct the $O(h^5)$ corrected trapezoidal rule. Using the $9$-point stencil $U_2$ and the quantities $D_0$--$D_5$ defined in \eqref{eq:D0-D5_lap_slp}, the conditions (\ref{eq:wei_moment_eqs}--\ref{eq:wei_normalize_cond}) yield the linear system:
\begin{equation}
\setlength{\arraycolsep}{4pt}
\left[\begin{array}{@{}*{9}{r}@{}}
1& 1 & 1 & 1 & 1 & 1 & 1 & 1 & 1\\
0& 1 &-1 & 0 & 0 & 1 &-1 & 1 &-1\\
0& 0 & 0 & 1 &-1 & 1 &-1 &-1 & 1\\
0& 1 & 1 & 0 & 0 & 1 & 1 & 1 & 1\\
0& 0 & 0 & 1 & 1 & 1 & 1 & 1 & 1\\
0& 0 & 0 & 0 & 0 & 1 & 1 &-1 &-1\\
0&0&0&0&0&1&1&1&1\\
0&0&0&0&0&1&-1&0&0\\
0&0&0&0&0&0&0&1&-1
\end{array}\right]
\left[\begin{array}{l}
\tau_{0,0}\\ \tau_{1,0}\\ \tau_{-1,0}\\ \tau_{0,1}\\ \tau_{0,-1}\\ \tau_{1,1}\\ \tau_{-1,-1}\\ \tau_{1,-1}\\ \tau_{-1,1}
\end{array}\right]
=
\begin{bmatrix}
D_0\\ D_1\\ D_2\\ D_3\\ D_4\\ D_5\\ 0\\ 0\\ 0
\end{bmatrix}
\quad
\begin{matrix}
1\\ u\\ v\\ u^2\\ v^2\\ uv\\ \, \\ \, \\ \,
\end{matrix}
\label{eq:corr_weights_system}
\end{equation}
where the first six equations correspond to \eqref{eq:wei_moment_eqs} and are labeled with the associating monomials $u^{\alpha}v^{\beta}$ on the right; the final three equations are the symmetric and normalizing conditions (\ref{eq:wei_sym_cond}--\ref{eq:wei_normalize_cond}). Solving this system gives
\begin{equation}
\begin{aligned}
\tau_{0,0} &= D_0 - D_3-D_4, \quad \tau_{\pm1,0} = \frac{D_3\pm D_1}{2},\quad\tau_{0,\pm1} = \frac{D_4\pm D_2}{2},\\
\tau_{1,1} &= \tau_{-1,-1} = -\tau_{1,-1} = -\tau_{-1,1}=\frac{D_5}{4}.
\end{aligned}
\label{eq:9pt_weights}
\end{equation}

Similarly, $O(h^5)$ corrected trapezoidal rules are constructed for the Laplace DLP in Appendix \ref{sc:app:dlp_corr_h5}, for the normal gradient of Laplace SLP in Appendix \ref{sc:app:slpn_corr_h5}, and for the Helmholtz potentials in Appendix \ref{sc:app:helm_corr_h5}.

\section{Numerical results}
\label{sc:results}
In this section, we present results of applying the locally corrected trapezoidal rules to the Laplace and Helmholtz potentials. The $O(h^3)$ quadratures for Laplace or Helmholtz SLP and DLP are constructed as in Sections \ref{sc:slp_wigner} and \ref{sc:dlp_epstein_der}; the $O(h^5)$ quadratures for Laplace SLP, DLP, and normal gradient of the SLP are constructed as in Section \ref{sc:high-order:weights} and Appendices \ref{sc:app:dlp_corr_h5} and \ref{sc:app:slpn_corr_h5}, respectively, while the corresponding quadratures for Helmholtz potentials are given in Appendix \ref{sc:app:helm_corr_h5}; the $O(h^3)$ quadrature for the Helmholtz SLP's normal gradient is obtained in an obvious manner from the corresponding $O(h^5)$ quadrature. All algorithms are implemented in MATLAB 2019a.

\paragraph{Example 1.} Figure \ref{fig:convergence_patch} shows the results of applying the corrected trapezoidal rules to the Laplace/Helmholtz layer potentials on a randomly generated quartic surface patch. The density function is also randomly generated and has the form
\begin{equation}
    \sigma(u,v) = -\left(a\cos(a+u)+b\sin(b+v)\right)e^{-c(u^2+v^2)^4},
    \label{eq:dens_func}
\end{equation}
where $a,b$ are standard Gaussian random numbers, and where $c=640$ such that $\sigma$ is compactly supported (up to double-precision) on the patch. $O(h^3)$ and $O(h^5)$ convergence are observed as expected.

\begin{figure}[htbp]
\centering
\includegraphics[width=\textwidth]{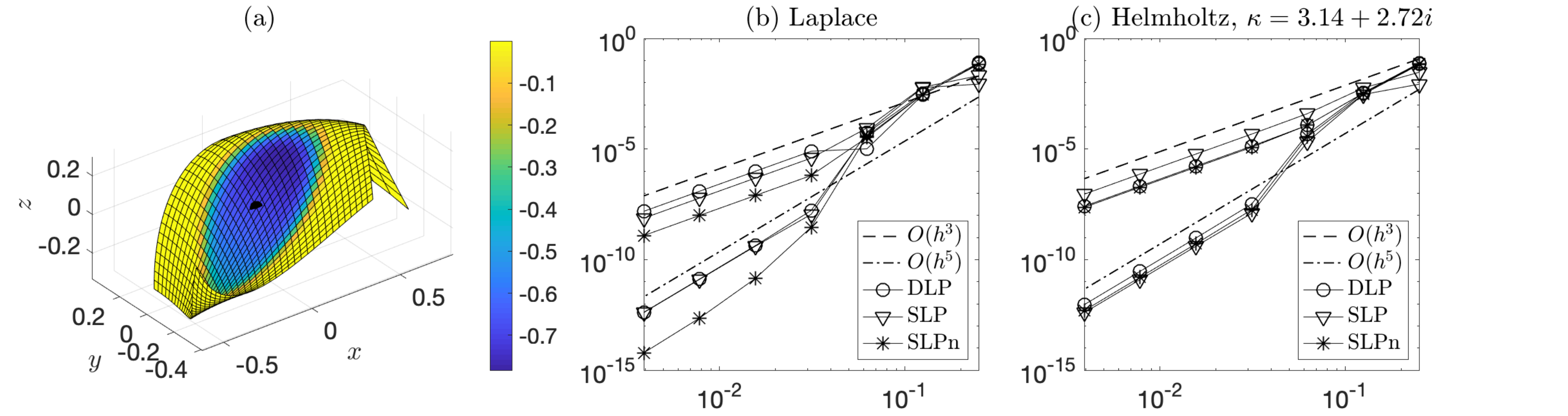}
\caption{(a) Use the $O(h^3)$ and $O(h^5)$ corrected trapezoidal rules to integrate the Laplace and Helmholtz layer potentials on a quartic surface patch with a random density function. The singular point is shown as the black dot at the center of the patch. Color represents the density $\sigma(u,v)$ defined by \eqref{eq:dens_func} with $a=.809, b=-.221$. Convergence against grid size $h$ are shown in (b) for Laplace and (c) for Helmholtz with wavenumber $\kappa$. Circle, triangle and asterisk correspond to SLP, DLP and the normal derivative of SLP (denoted ``SLPn''), respectively.}\label{fig:convergence_patch}
\end{figure}

\paragraph{Example 2.}

We solve the Dirichlet and Neumann boundary value problems (BVPs) associated with the Laplace and Helmholtz equations exterior to a toroidal surface. The Laplace BVPs are reformulated as second kind integral equations based on potential theory \cite[\S 6.4]{kress2014linear} and the Helmholtz BVPs as combined-field integral equations \cite[\S 2]{bremer2015high}, then these integral equations are discretized into the form \eqref{eq:nystromlinsys} using the Nystr\"om method with the $O(h^5)$ corrected trapezoidal rules shown in Example 1. The solution procedure is as follows: first the quadrature correction weights are pre-computed, then the integral equations are solved iteratively using GMRES with a tolerance $\epsilon_\text{\tiny GMRES}$; in each iteration, we first apply the punctured trapezoidal rule discretization of the Laplace/Helmholtz kernel using the Fast Multiple Method (FMM), then the pre-computed local correction weights are applied. The overall computational cost is $O(N)$ when $N$ discretization points are used.

Figure \ref{fig:laplace_bvp_O(h^5)} and Table \ref{tbl:corrected_bvps_stats} show the convergence in relative sup-norm error and the timing results. The $O(h^5)$ scaling in the errors and $O(N)$ scaling in time are clearly observed. Notice that for the Helmholtz BVPs with $\kappa=25+i$ (meaning the geometry has a diameter of about 12 wavelengths), the errors $E^\text{diri}$ and $E^\text{neu}$ are higher than the other cases since the solution is more oscillatory hence requiring higher resolutions to resolve. The number of GMRES iterations $N_\mathrm{iter}$ has an upper bound independent of the number of points $N$, this demonstrates that under the corrected trapezoidal rule discretization, the well-conditioning of the second kind integral operators is preserved. 

\begin{figure}[htbp]
\centering
\includegraphics[width=0.28\textwidth]{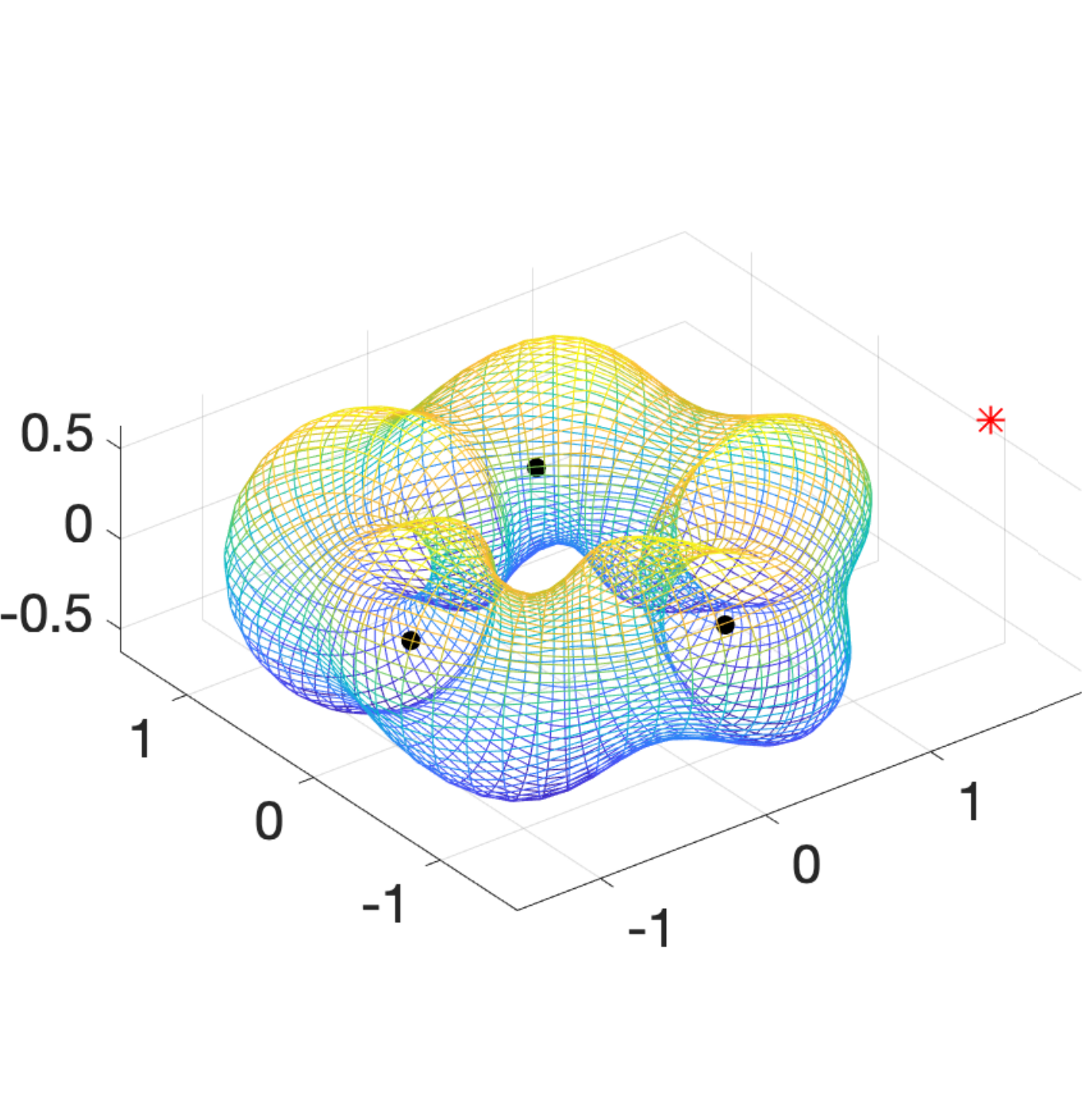} 
\includegraphics[width=0.7\textwidth]{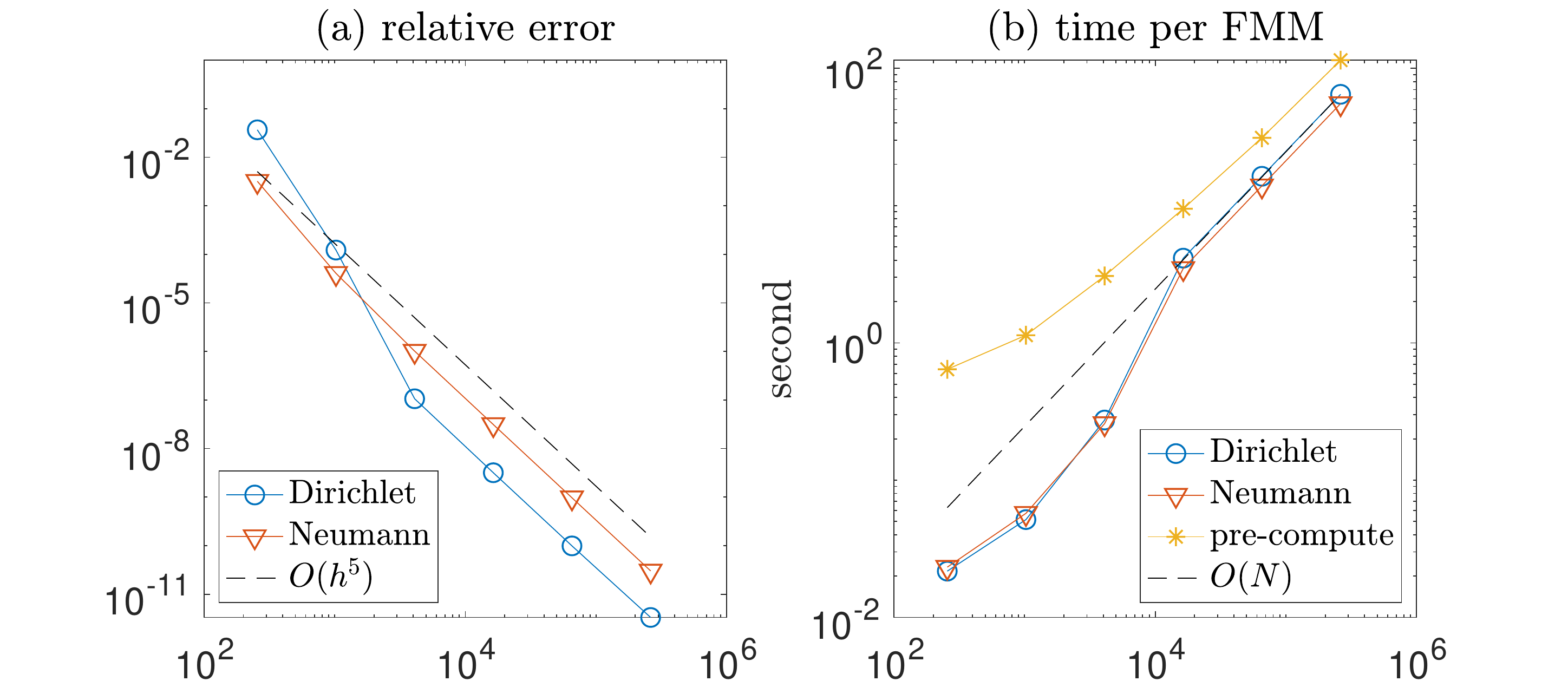}
\caption{Solution of the Laplace boundary value problems exterior to a toroidal surface discretized using the $O(h^5)$ corrected trapezoidal rule. The exact solution is generated by point sources located inside the surface shown by the block dots, the accuracy of solution is verified at a point away from the surface shown by the red asterisk. A total number of $N$ points are used so that grid spacing is $h=O(N^{-\frac{1}{2}})$. (a) $O(h^5)$ convergence of the relative error for both BVPs. (b) Average times per GMRES-iteration via FMM for both BVPs as well as the pre-computation times of the correction weights $\tau_{\mu,\nu}$. Both horizontal axes are $N$.}
\label{fig:laplace_bvp_O(h^5)}
\end{figure}

\begin{table}[htbp]
    \centering
    \def\arraystretch{1.2}
    \begin{tabular}{c|c|cccccc}
        \multicolumn{2}{c}{Laplace} & \multicolumn{6}{|c}{$\epsilon_\text{\tiny GMRES}=10^{-12}$}\\
        \hline
        $N$ & $T_\text{wei}$ & $T_\text{iter}^\text{diri}$ & $N_\text{iter}^\text{diri}$ & $T_\text{iter}^\text{neu}$ & $N_\text{iter}^\text{neu}$ & $E^\text{diri}$ & $E^\text{neu}$ \\
        \hline
        256   & 0.6 & 0.02 & 29 & 0.02 & 21 & 3.7e-02 & 3.2e-03\\
        1024  & 1.1 & 0.05 & 26 & 0.06 & 20 & 1.2e-04 & 4.1e-05\\
        4096  & 3.0 & 0.27 & 25 & 0.26 & 19 & 1.1e-07 & 1.0e-06\\
        16384 & 9.4 & 4.1  & 22 & 3.5  & 19 & 3.2e-09 & 3.1e-08\\
        65536 & 31.1 & 16.3 & 22& 13.9 & 19 & 9.9e-11 & 9.7e-10\\
        262144& 114.6& 64.6 & 22& 55.3 & 19 & 3.3e-12 & 3.0e-11\\
        \hline
        \multicolumn{2}{c}{Helmholtz} & \multicolumn{6}{|c}{$\kappa=1.42 + 1.11i$, $\epsilon_\text{\tiny GMRES}=10^{-12}$}\\
        \hline
        $N$ & $T_\text{wei}$ & $T_\text{iter}^\text{diri}$ & $N_\text{iter}^\text{diri}$ & $T_\text{iter}^\text{neu}$ & $N_\text{iter}^\text{neu}$ & $E^\text{diri}$ & $E^\text{neu}$ \\
        \hline
        256   & 0.73 & 0.02 & 29 & 0.04 & 21 & 9.3e-03 & 1.3e-02\\
        1024  & 1.23 & 0.21 & 23 & 0.32 & 18 & 3.6e-05 & 8.3e-05\\
        4096  & 3.4 & 2.4  & 21 & 3.2  & 18 & 1.6e-06 & 2.2e-06\\
        16384 & 10.5 & 16.6 & 21 & 19.9 & 18 & 5.1e-08 & 7.0e-08\\
        65536 & 34.1 & 68.4 & 21 & 82.0 & 18 & 1.6e-09 & 2.2e-09\\
        262144& 127.0 &248.9 & 21 & 293.8 & 18 & 5.1e-11 & 6.8e-11\\
        \hline
        \multicolumn{2}{c}{Helmholtz} & \multicolumn{6}{|c}{$\kappa=25 + 1i$ ($12$ wavelengths), $\epsilon_\text{\tiny GMRES}=10^{-9}$}\\
        \hline
        $N$ & $T_\text{wei}$ & $T_\text{iter}^\text{diri}$ & $N_\text{iter}^\text{diri}$ & $T_\text{iter}^\text{neu}$ & $N_\text{iter}^\text{neu}$ & $E^\text{diri}$ & $E^\text{neu}$ \\
        \hline
        256   & 1.3 & 0.05 & 40 & 0.09 & 40 & 1.1e+00 & 2.5e+01\\
        1024  & 1.2 & 0.28 & 40 & 0.43 & 33 & 1.3e+00 & 3.0e+00\\
        4096  & 3.4 & 4.2  & 36 & 5.0  & 22 & 9.7e-02 & 4.1e-01\\
        16384 & 10.2 & 14.8 & 29 & 17.1 & 22 & 1.8e-03 & 2.7e-03\\
        65536 & 34.7 & 58.7 & 29 & 70.0 & 22 & 5.1e-05 & 7.5e-05\\
        262144& 126.6 &204.6 & 29 & 245.7 & 22 & 1.6e-06 & 2.3e-06
    \end{tabular}
    \caption{Timings and convergence of solving the Dirichlet and Neumann boundary value problems associated with the Laplace and Helmholtz equations exterior to a toroidal surface (see Figure \ref{fig:laplace_bvp_O(h^5)}). The superscripts ``diri'' and ``neu'' indicates Dirichlet and Neumann problems, respectively. $T_\text{wei}$ is the time for pre-computing the $O(h^5)$ quadrature correction weights $\tau_{\mu,\nu}$ for all the involved integral operators: the DLP, SLP and its normal gradient. $N_\text{iter}$ is the number of GMRES iteration and $T_\text{iter}$ is the average time of applying the punctured trapezoidal rule via FMM in each iteration (the time to apply the correction weights is negligible). $E$ in the last two columns denotes the relative $\infty$-norm error.}
    \label{tbl:corrected_bvps_stats}
\end{table}

\paragraph{Example 3.} Finally, we visualize the Epstein zeta function.

The left panel of Figure \ref{fig:epstein} shows the absolute value of $Z_A(s)$ evaluated using the formula \eqref{eq:z(s)} with fixed $E,F,G$ and varying $s$. This picture should look familiar to those who are familiar with the Riemann $\zeta(s)$. Clearly, $Z_A(s)$ is smooth but with a pole at $s=2$.

The right panel of Figure \ref{fig:epstein} shows the scaled value of the Epstein zeta function (evaluated using \eqref{eq:epstein_anal_s1_trunc})
$$
\sqrt{E}\cdot Z_A(1) \;\dot{=}\; \sideset{}'\sum\frac{1}{\sqrt{i^2+2\beta ij+\alpha j^2}}
$$
as a function of two variables $\alpha=\frac{G}{E},\,\beta=\frac{F}{E}$. The dot above the equal sign indicates that the above expression is understood as in the sense of analytic continuation. The function is singular near the parabola $\alpha=\beta^2$, which corresponds to $EF-G^2=0$, i.e. the local basis $\rr_u$ and $\rr_v$ are linearly dependent; away from the curve $\alpha=\beta^2$, the graph is smooth and well-behaved, which implies that given a regular surface with a reasonable parameterization, the correction weights as Epstein zeta function values converge uniformly rapidly.

\begin{figure}[htbp]
\centering
\includegraphics[width=0.45\textwidth]{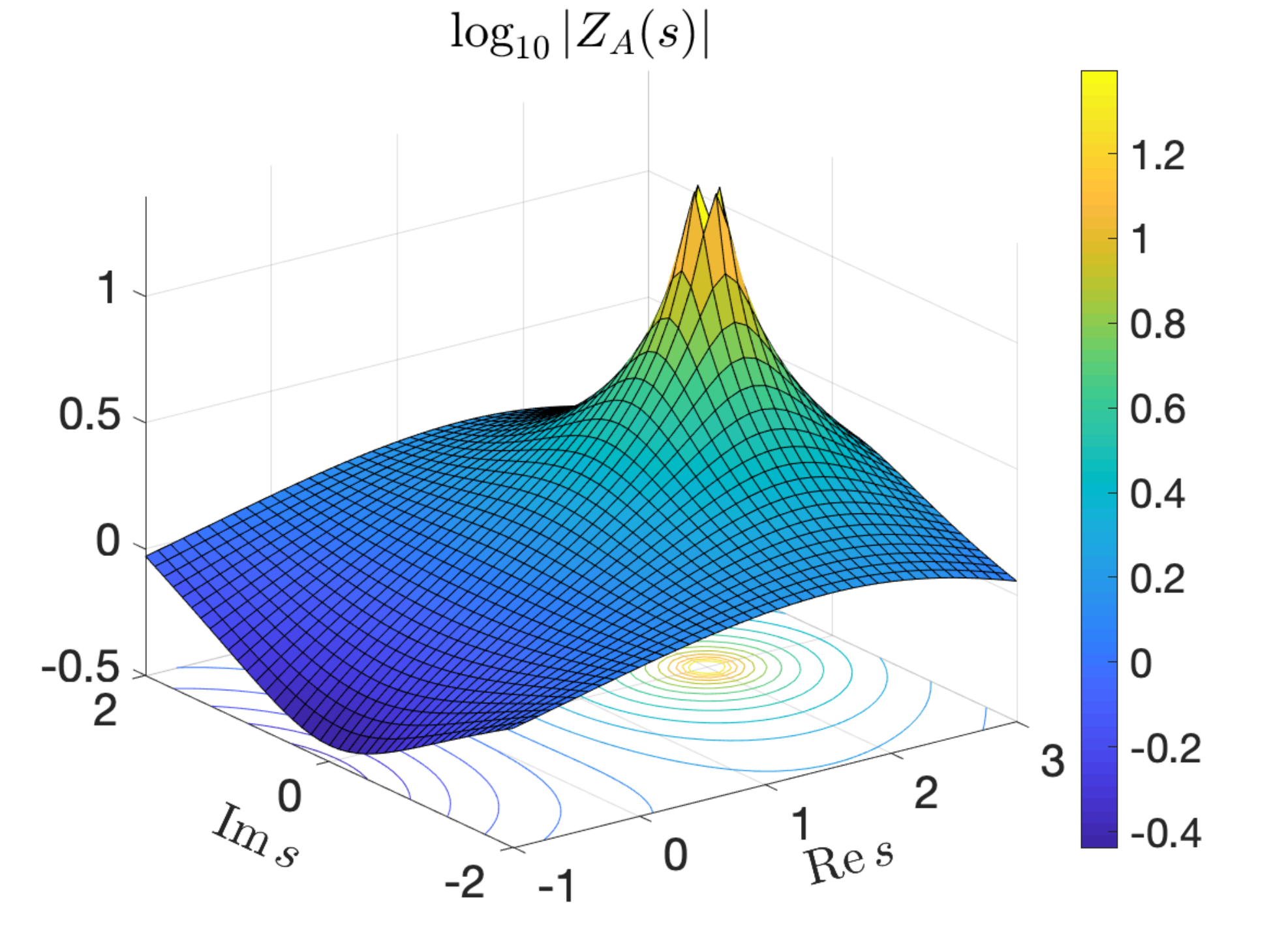} \includegraphics[width=0.45\textwidth]{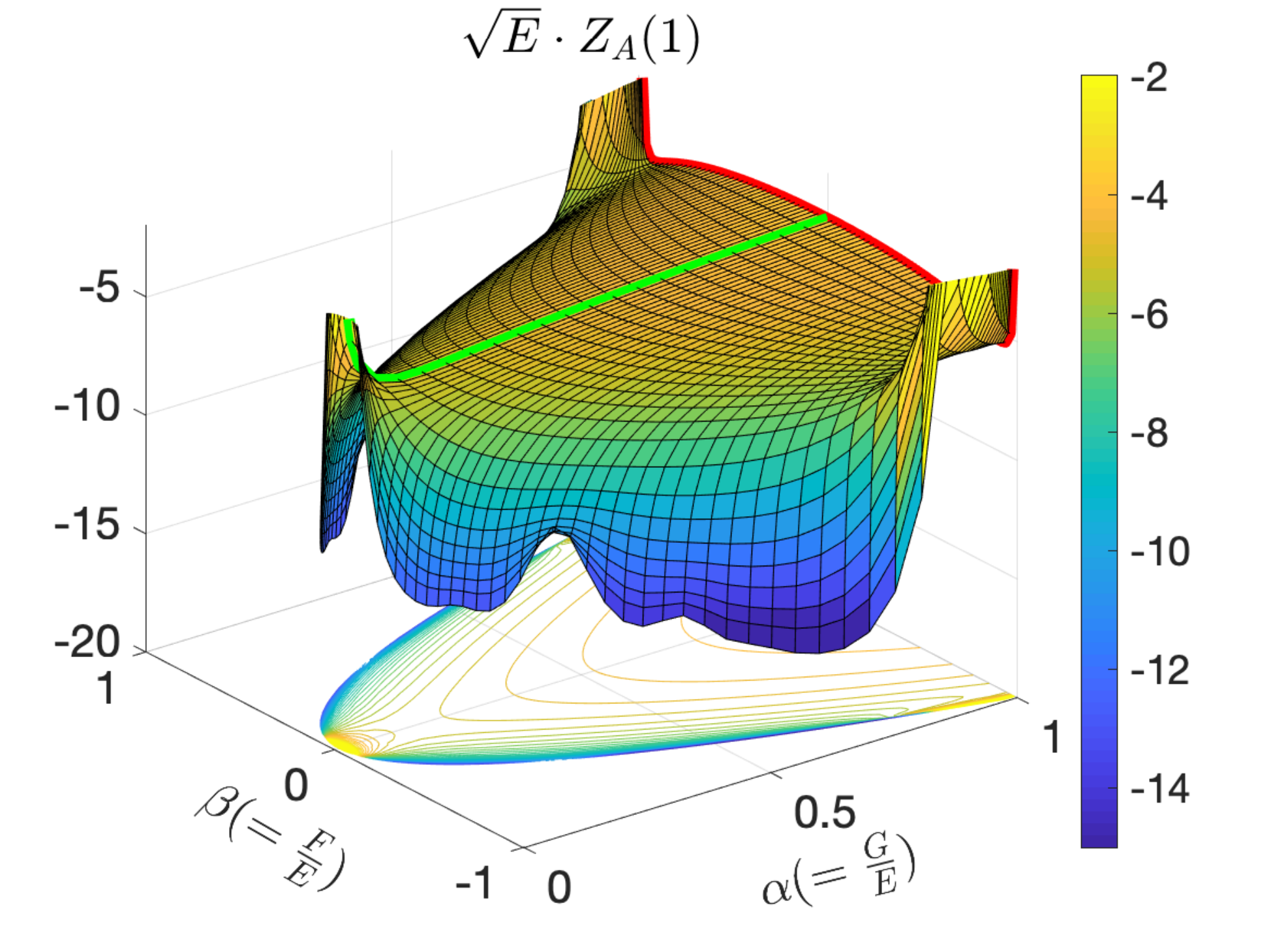}
\caption{Left: log-magnitude of $Z_A(s)$ for $E = 3.1, F = 0.8, G = 2.3.$ Clearly $Z_A(s)$ has a pole at $s=2$. Right: $Z_{A}(1)$ scaled by $\sqrt{E}$ for $(E,F,G)=(1,\beta ,\alpha)\cdot E$. Observations: (1) the constraint $EG-F^2> 0$ corresponds to $\alpha>\beta^2$, and the value of $Z_A$ diverges as $\alpha-\beta^2\to0$, (2) the values on the line $\beta=0$ (green curve) corresponds to $F=0$, i.e. $\rr_u\perp\rr_v$, (3) the values on the line $\alpha=1$ (red curve) corresponds to $E=G$, i.e. $\rr_u$ and $\rr_v$ have the same length but form different angles.}\label{fig:epstein}
\end{figure}

\section{Conclusion}

We have presented a new systematic and efficient approach to constructing locally corrected trapezoidal rules for a class of weakly singular boundary integral operators on curved surfaces in three dimension. In particular, we demonstrate that for kernels involving a singularity of strength $O(|\xx|^{-1})$, the correction weights can be explicitly computed with the help of the Epstein zeta function and its parametric derivatives. Complete error analysis of the quadratures is presented.  The $O(h^3)$ and $O(h^5)$ corrected quadratures are implemented. In particular, the $O(h^3)$ quadratures are simple and efficient to construct and can easily extend to many common Fredholm operators. We have shown that these quadratures are highly compatible with fast algorithms such as the FMM, achieving an $O(N)$ overall computational complexity when solving Laplace or Helmholtz BVPs on a toroidal surface. 

Codes for Examples 1 and 2 (not including FMM) in Section \ref{sc:results} are available at \url{https://github.com/bobbielf2/ZetaTrap3D}. 

One limitation of our method is that higher derivatives of the geometry are required for high-order corrections. These can in practice be obtained through numerical differentiation, although this is likely to degrade the accuracy unless high quality parameterizations of the surface are available.

The fact that the quadrature corrections are strongly localized makes them particularly well suited for use with fast direct solvers, such as those described in \cite{2019_martinsson_book}. Work in this direction is under way, as is an extension to other integral operators such as, e.g., Stokes kernels used to model fluid problems as in \cite{marple2016fast}. We are also investigating whether partition of unity domain decomposition techniques as in \cite{bruno2001fast} can help in generalizing these ideas to more general geometries. 

\begin{acknowledgements}
The work reported was supported by the Office of Naval Research
(grant N00014-18-1-2354) and by the National Science Foundation (grant DMS-1620472).
\end{acknowledgements}

%
%

\appendix

\section{Wigner-type limit formulae for the Laplace SLP}
\label{sc:app:slp_wigner-type}
To complete the expressions \eqref{eq:D0-D5_lap_slp}, we give formulae for $\cC_1,\cC_2$ and $\cC_{01}$. Firstly, $\cC_1$ and $\cC_2$ are the Wigner-type limits associated with $\cW^{(2)}$ defined by \eqref{eq:w2_wigner-type}, which we restate here
\begin{equation}
\cW^{(2)}(\uu) = -\frac{1}{2}\frac{2\dd^1\cdot\dd^2}{Q^{\frac{3}{2}}},
\end{equation}
where the numerator is a homogeneous polynomial:
$$
2\dd^1\cdot\dd^2 = L_{30}^Au^3+L_{21}^Au^2v+L_{12}^Auv^2+L_{03}^Av^3,
$$
such that
$$
L_{30}^A = \rr_u\cdot\rr_{uu},\quad 
L_{21}^A = 2\rr_u\cdot\rr_{uv}+\rr_v\cdot\rr_{uu},\quad L_{12}^A = 2\rr_{uv}\cdot\rr_v+\rr_u\cdot\rr_{vv},\quad  L_{03}^A = \rr_v\cdot\rr_{vv},
$$
where all the involved derivatives are evaluated at $\mathbf{0}.$ Thus we have the following Wigner-type limits
\begin{align}
    \cC_1 = -\cL[\cW^{(2)}(\uu)\,u] = -2L^{(A)}_{3,1}\cdot\square^{(2)}Z_A(-1)
\label{eq:w2_wigner-type_C1}\\
    \cC_2 = -\cL[\cW^{(2)}(\uu)\,v] = -2L^{(A)}_{3,2}\cdot\square^{(2)}Z_A(-1)
\label{eq:w2_wigner-type_C2}
\end{align}
where
\begin{equation}
    \square^{(2)} :=\left(\frac{\pd^2}{\pd E^2},\frac{1}{2}\frac{\pd^2}{\pd E\pd F},\frac{1}{4}\frac{\pd^2}{\pd F^2}, \frac{1}{2}\frac{\pd^2}{\pd G\pd F}, \frac{\pd^2}{\pd G^2}\right)
    \label{eq:square(2)}
\end{equation}
is a vector-valued second-derivative operator, and where the constants in the vectors $L^{(A)}_{3,1}$ and $L^{(A)}_{3,2}$ correspond to the terms $u\,\cW^{(2)}(\uu)$ and $v\,\cW^{(2)}(\uu)$, respectively, therefore
$$
    \begin{aligned}
    L^{(A)}_3 &:= (L_{30}^A,L_{21}^A,L_{12}^A,L_{03}^A),\\
    L^{(A)}_{3,1} &:= (1,0)*L^{(A)}_3 = (L_{30}^A,L_{21}^A,L_{12}^A,L_{03}^A,0),\\
    L^{(A)}_{3,2} &:= (0,1)*L^{(A)}_3 = (0,L_{30}^A,L_{21}^A,L_{12}^A,L_{03}^A).
    \end{aligned}
$$
where ``$*$'' represents convolution (given that these are coefficients of the product of polynomials). Next, $\cC_{01}$ is the Wigner-type limit associated with the function $\cW^{(3)}$, given by
$$
\cW^{(3)}(\uu) = -\frac{1}{2}\frac{2\dd^1\cdot\dd^3+\dd^2\cdot\dd^2}{Q^{\frac{3}{2}}}+\frac{3}{8}\frac{(2\dd^1\cdot\dd^2)^2}{Q^{\frac{5}{2}}},
$$
then, expanding the denominator as above, one derives that
\begin{equation}
    \cC_{01} = -\cL[\cW^{(3)}(\uu)] = -\big(2L_4^{(A)}\cdot\square^{(2)}+L_6^{(A)}\cdot\square^{(3)}\big)Z_A(-1)
\label{eq:w3_wigner-type_C01}
\end{equation}
where
\begin{equation}
    \square^{(3)} =\Bigg(\frac{\pd^3}{\pd E^3},\frac{1}{2}\frac{\pd^3}{\pd E^2\pd F},\frac{1}{4}\frac{\pd^3}{\pd E\pd F^2},\frac{1}{8}\frac{\pd^3}{\pd F^3},\frac{1}{4}\frac{\pd^3}{\pd F^2\pd G},  \frac{1}{2}\frac{\pd^3}{\pd F\pd G^2}, \frac{\pd^3}{\pd G^3}\Bigg)
    \label{eq:square(3)}
\end{equation}
is a vector-valued third-derivative operator, and where the vectors $L^{(A)}_4$ and $L^{(A)}_6$ correspond to the expansions of $(2\dd^1\cdot\dd^3+\dd^2\cdot\dd^2)$ and $(2\dd^1\cdot\dd^2)^2$, respectively, such that
$$
\begin{aligned}
L^{(A)}_4 &= (L_{40}^A,L_{31}^A,L_{22}^A,L_{13}^A,L_{04}^A),\\
L^{(A)}_6 &= L^{(A)}_3*L^{(A)}_3,
\end{aligned}
$$
where
\begin{align*}
L_{40}^A &= \rr_u\cdot\rr_{uuu}/3+\rr_{uu}\cdot\rr_{uu}/4,\\
L_{31}^A &= \rr_v\cdot\rr_{uuu}/3+\rr_u\cdot\rr_{uuv}+\rr_{uu}\cdot\rr_{uv},\\
L_{22}^A &= \rr_v\cdot\rr_{uuv}+\rr_u\cdot\rr_{uvv}+\rr_{uu}\cdot\rr_{vv}/2+\rr_{uv}\cdot\rr_{uv},\\
L_{13}^A &= \rr_v\cdot\rr_{uvv}+\rr_u\cdot\rr_{vvv}/3+\rr_{uv}\cdot\rr_{vv},\\
L_{04}^A &= \rr_v\cdot\rr_{vvv}/3+\rr_{vv}\cdot\rr_{vv}/4.
\end{align*}

\section{\texorpdfstring{$O(h^5)$}{TEXT} corrected trapezoidal rule for the Laplace DLP}
\label{sc:app:dlp_corr_h5}
Consider the DLP kernel $\cK(\uu)$ given by
$$\cK(\uu) = \frac{\big(\rr(\mathbf{0})-\rr(\uu)\big)\cdot(\rr_u(\uu)\times\rr_v(\uu))}{r(\uu)^3}=-\frac{\rr(\uu)\cdot(\rr_u(\uu)\times\rr_v(\uu))}{r(\uu)^3},$$
then the $O(h^5)$ quadrature for the DLP is given by
\begin{equation}
    \int_{|\uu|\leq a}\cK(\uu)\sigma(\uu)\,\d\uu = \sideset{}'\sum_{|\ii |\leq N}\cK(\ii h)\sigma(\ii h) + \sum_{(\mu,\nu)\in U_2}\sigma(\mu h,\nu h)\tau_{\mu,\nu} +O(h^5)
\label{eq:corr_trap_rule_dlp}
\end{equation}
where $\sigma$ is a smooth density function and $U_2$ is the 9-point stencil as defined in \eqref{eq:stencil_curved}. The correction weights satisfy the system \eqref{eq:corr_weights_system} but with the quantities $D_0$--$D_5$ replaced by \eqref{eq:D0-D5_lap_dlp} below, which we will derive next.

To derive the formulae for the correction weights, we follow an analysis similar to Section \ref{sc:high-order_analysis}. First the DLP kernel $\cK(\uu)$ is expanded using the binomial series
\begin{equation}
(1+x)^{-\frac{3}{2}} = 1-\frac{3}{2}x+\frac{15}{8}x^2+\dots+\binom{-\frac{3}{2}}{m}x^m+\dots
\end{equation}
thus
\begin{equation}
-\frac{\rr\cdot(\rr_u\times\rr_v)}{r^3}= -\rr\cdot(\rr_u\times\rr_v)\left(\frac{1}{Q^{\frac{3}{2}}}-\frac{3(r^2-Q)}{2Q^\frac{5}{2}}+\frac{15(r^2-Q)^2}{8Q^{\frac{7}{2}}}+\dots\right)
\label{eq:dlp_expan_in_1st_fund_form}
\end{equation}
Then the numerators $O((r^2-Q)^m)$ are further expanded as in \eqref{eq:(r2-Q)^m}; the term $-\rr\cdot(\rr_u\times\rr_v)$ can be expanded as
$$
\begin{aligned}
-\rr\cdot(\rr_u\times\rr_v) &= -(\dd^1+\dd^2+\dd^3+\dd^4+O(\uu^5))\cdot\Big((\dd^1_u+\dd^2_u+\dd^3_u+\dd^4_u+O(\uu^4))\times\\ &\hspace*{2cm}(\dd^1_v+\dd^2_v+\dd^3_v+\dd^4_v+O(\uu^4))\Big)\\
&=q_2^{(B)}+q_3^{(B)}+q_4^{(B)}+O(\uu^5)
\end{aligned}
$$
where
$$
\begin{aligned}
q_2^{(B)}(\uu) &= \frac{1}{2}\big(L^B_{20}u^2+L^B_{11}uv+L^B_{02}v^2\big)\\
q_3^{(B)}(\uu) &=  \frac{1}{2}(L^B_{30}u^3+L^B_{21}u^2v+L^B_{12}uv^2+L^B_{03}v^3)\\
q_4^{(B)}(\uu) &= \frac{1}{4}(L^B_{40}u^4+L^B_{31}u^3v+L^B_{22}u^2v^2+L^B_{13}uv^3+L^B_{04}v^4)
\end{aligned}
$$
where the involved constants are defined as follows (all involved derivatives are evaluated at $\mathbf{0}$),
$$
\begin{aligned}
L^B_{20} &= (\rr_u\times\rr_v)\cdot\rr_{uu},\quad L^B_{11} = 2(\rr_u\times\rr_v)\cdot\rr_{uv},\quad L^B_{02} = (\rr_u\times\rr_v)\cdot\rr_{vv},\\
L^B_{30} &= \frac{2}{3}(\rr_u\times\rr_v)\cdot\rr_{uuu}-(\rr_u\times\rr_{uu})\cdot\rr_{uv}\\
L^B_{21} &=2(\rr_u\times\rr_v)\cdot\rr_{uuv}-(\rr_u\times\rr_{uu})\cdot\rr_{vv}-(\rr_v\times\rr_{uu})\cdot\rr_{uv}\\
L^B_{12} &=2(\rr_u\times\rr_v)\cdot\rr_{uvv}-(\rr_u\times\rr_{uv})\cdot\rr_{vv}-(\rr_v\times\rr_{uu})\cdot\rr_{vv}\\
L^B_{03} &= \frac{2}{3}(\rr_u\times\rr_v)\cdot\rr_{vvv}-(\rr_v\times\rr_{uv})\cdot\rr_{vv}
\end{aligned}
$$

$$
\begin{aligned}
L^B_{40} &= \frac{1}{6}\Big(-6\rr_u\times \rr_{uu}\cdot\rr_{uuv}+8\rr_u\times \rr_{uv}\cdot\rr_{uuu}+3\rr_u\times \rr_v\cdot\rr_{uuuu}-2\rr_v\times\rr_{uu}\cdot \rr_{uuu}\Big)\\
L^B_{31} &= \frac{2}{3}\Big(-3\rr_u\times \rr_{uu}\cdot\rr_{uvv}+2\rr_u\times \rr_{vv}\cdot\rr_{uuu}+3\rr_u\times \rr_v\cdot\rr_{uuuv}\\
         &\qquad+3\rr_u\times \rr_{uv}\cdot\rr_{uuv}-3\rr_v\times \rr_{uu}\cdot\rr_{uuv}+\rr_v\times \rr_{uv}\cdot\rr_{uuu}\Big)\\
\
L^B_{22} &= \Big(-\rr_u\times \rr_{uu}\cdot\rr_{vvv}+3\rr_u\times \rr_{vv}\cdot\rr_{uuv}\\
        &\qquad +3\rr_u\times \rr_v\cdot\rr_{uuvv}-3\rr_v\times\rr_{uu} \cdot\rr_{uvv}+\rr_v\times \rr_{vv}\cdot\rr_{uuu}\Big)\\
L^B_{13} &= \frac{2}{3}\Big(-\rr_u\times \rr_{uv}\cdot\rr_{vvv}+3\rr_u\times \rr_{vv}\cdot\rr_{uvv}+3\rr_u\times \rr_v\cdot\rr_{uvvv}\\
          &\qquad-2\rr_v\times \rr_{uu}\cdot\rr_{vvv}+3\rr_v\times \rr_{vv}\cdot\rr_{uuv}-3\rr_v\times \rr_{uv}\cdot\rr_{uvv}\Big)\\
L^B_{04} &= \frac{1}{6}\Big(3\rr_u\times \rr_v\cdot\rr_{vvvv}+2\rr_u\times \rr_{vv}\cdot\rr_{vvv}-8\rr_v\times \rr_{uv}\cdot\rr_{vvv}+6\rr_v\times\rr_{vv}\cdot\rr_{uvv}\Big)
\end{aligned}
$$

Similar to Table \ref{tbl:slp_singularity_summary}, we summarize in Table \ref{tbl:dlp_singularity_summary} the singularity information associated with the DLP kernel expansion \eqref{eq:dlp_expan_in_1st_fund_form}.

\begin{table}[htbp]
\centering
\begin{tabular}{l | lll | l}
$\frac{-\rr\cdot(\rr_u\times\rr_v)}{r^3}$ & $\frac{-\rr\cdot(\rr_u\times\rr_v)}{\sqrt{Q}^3}$ &  $-\frac{3}{2}\frac{-\rr\cdot(\rr_u\times\rr_v)(r^2-Q)}{\sqrt{Q}^5}$ & $\frac{15}{8}\frac{-\rr\cdot(\rr_u\times\rr_v)(r^2-Q)^2}{\sqrt{Q}^7}$ & Density \\[10pt]
\hline
$\cV^{(1)}$ & $\frac{q_2^{(B)}}{\sqrt{Q}^3}$ &  & & $\Theta(\uu^2), \Theta(\uu^0)$  \\[15pt]
$\cV^{(2)}$ & $\frac{q_3^{(B)}}{\sqrt{Q}^3}$  & $-\frac{3}{2}\frac{q_3^{(1)}q_2^{(B)}}{\sqrt{Q}^5}$  &  &  $\Theta(\uu^1)$ \\[15pt]
$\cV^{(3)}$ & $\frac{q_4^{(B)}}{\sqrt{Q}^3}$ & $-\frac{3}{2}\frac{q_4^{(1)}{q_2^{(B)}}+q_3^{(1)}q_3^{(B)}}{\sqrt{Q}^5}$  & $\frac{15}{8}\frac{q_6^{(2)}q_2^{(B)}}{\sqrt{Q}^7}$  & $\Theta(\uu^0)$
\end{tabular}
\caption{Summary of the functions whose associated Wigner-type limits are helpful for constructing the $O(h^5)$ corrected quadrature for the Laplace DLP. See Table \ref{tbl:slp_singularity_summary} for how this table is organized. $q_k^{(m)}$ are as defined in \eqref{eq:(r2-Q)^m}.}\label{tbl:dlp_singularity_summary}
\end{table}

Similar to \eqref{eq:D0-D5_lap_slp}, we define the following quantities based on Table \ref{tbl:dlp_singularity_summary} which are useful for constructing the $O(h^5)$ quadrature \eqref{eq:corr_trap_rule_dlp} for the DLP:
\begin{equation}
\begin{aligned}
D_0 &= -\cL[\cV^{(1)}(\uu)]\,h-\cL[\cV^{(3)}(\uu)]\,h^3 = L^{(B)}_2\cdot\square^{(1)}Z(1)\,h + \cC_{01}\,h^3\\
D_1 &= -\cL[\cV^{(2)}(\uu)\,u]\,h^2 = \cC_{1}\,h^2\\
D_2 &= -\cL[\cV^{(2)}(\uu)\,v]\,h^2 = \cC_{2}\,h^2\\
D_3 &= -\cL[\cV^{(1)}(\uu)\,u^2]\,h = 2\big((1,0,0)*L^{(B)}_2\big)\cdot\square^{(2)}\,Z(-1)\,h\\
D_4 &= -\cL[\cV^{(1)}(\uu)\,v^2]\,h = 2\big((0,0,1)*L^{(B)}_2\big)\cdot\square^{(2)}\,Z(-1)\,h\\
D_5 &= -\cL[\cV^{(1)}(\uu)\,uv]\,h = 2\big((0,1,0)*L^{(B)}_2\big)\cdot\square^{(2)}\,Z(-1)\,h
\end{aligned}
    \label{eq:D0-D5_lap_dlp}
\end{equation}
where
$$
\begin{aligned}
\cC_{01} &=  \Big((L^{(B)}_4\cdot\square^{(2)}) + 2(L^{(B)}_6\cdot\square^{(3)})+  (L^{(A)}_6* L^{(B)}_2\cdot\square^{(4)})\Big)Z(-1)\\
\cC_{1} &= 2\Big((1,0)*L^{(B)}_3\cdot\square^{(2)}+(1,0) * L^{(A)}_3 * L^{(B)}_2\cdot\square^{(3)}\Big)Z(-1)\\
\cC_{2} &= 2\Big((0,1)*L^{(B)}_3\cdot\square^{(2)}+(0,1) * L^{(A)}_3 * L^{(B)}_2\cdot\square^{(3)}\Big)Z(-1).
\end{aligned}
$$
The involved vector-valued differential operators, besides (\ref{eq:square(2)},~\ref{eq:square(3)}), are defined as
\begin{equation}
\begin{aligned}
    \square^{(1)} &:= \left(\frac{\partial}{\partial E},\frac{1}{2}\frac{\partial}{\partial F},\frac{\partial}{\partial G}\right),\\
    \square^{(4)} &:= \bigg(\frac{\pd^4}{\pd E^4},\frac{1}{2}\frac{\pd^4}{\pd E^3\pd F},\frac{1}{4}\frac{\pd^4}{\pd E^2\pd F^2}, \frac{1}{8}\frac{\pd^4}{\pd E\pd F^3},\\
    & \qquad \frac{1}{16}\frac{\pd^4}{\pd F^4}, \frac{1}{8}\frac{\pd^4}{\pd F^3\pd G}, \frac{1}{4}\frac{\pd^4}{\pd F^2\pd G^2}, \frac{1}{2}\frac{\pd^4}{\pd F\pd G^3}, \frac{\pd^4}{\pd G^4}\bigg),
\end{aligned}
\label{eq:square(1,4)}
\end{equation}
and the involved constant vectors are
$$
\begin{aligned}
L^{(B)}_2 &:= (L^B_{20}, L^B_{11}, L^B_{02}),\\
L^{(B)}_3 &:= (L^B_{30}, L^B_{21}, L^B_{12}, L^B_{03})\\
L^{(B)}_4 &:= (L^B_{40}, L^B_{31}, L^B_{22}, L^B_{13}, L^B_{04})\\
L^{(B)}_6 &:= L^{(A)}_4* L^{(B)}_2+L^{(A)}_3* L^{(B)}_3 
\end{aligned}
$$
and where $L^{(A)}_3,L^{(A)}_4$ and $L^{(A)}_6$ are as appeared in Appendix \ref{sc:app:slp_wigner-type} in the formulae for the SLP.

Substituting the above constants $D_0$--$D_5$ into the system \eqref{eq:corr_weights_system} results in the correction weights for the DLP in \eqref{eq:corr_trap_rule_dlp}.

\section{\texorpdfstring{$O(h^5)$}{TEXT} corrected trapezoidal rule for the normal gradient of Laplace SLP}
\label{sc:app:slpn_corr_h5}
We will omit the derivations in this section and just present the quadrature formula for the normal gradient of the Laplace SLP on a surface.

The kernel of the SLP normal gradient, centered at $\mathbf{0}$, is
$$\cK(\uu) = -\frac{\big(\rr(\mathbf{0})-\rr(\uu)\big)\cdot\nn(\mathbf{0})}{r(\uu)^3}=\frac{\rr(\uu)\cdot\nn(\mathbf{0})}{r(\uu)^3}.$$
then the associated $O(h^5)$ quadrature is given by
\begin{equation}
    \int_{|\uu|\leq a}\cK(\uu)P(\uu)\,\d\uu = \sideset{}'\sum_{|\ii |\leq N}\cK(\ii h)P(\ii h) + \sum_{(\mu,\nu)\in U_2}P(\mu h,\nu h)\tau_{\mu,\nu} +O(h^5)
\label{eq:corr_trap_rule_slpn}
\end{equation}
where $P(\uu)=\sigma(\uu)\,|\rr_u(\uu)\times\rr_v(\uu)|$ is smooth and $\sigma$ is a smooth density function, and where $U_2$ is the 9-point stencil as defined in \eqref{eq:stencil_curved}. The correction weights satisfy the system \eqref{eq:corr_weights_system} but with the quantities $D_0$--$D_5$ replaced by the following:
\begin{equation}
\begin{aligned}
D_0 &= -\cL[\cW_n^{(1)}(\uu)]\,h-\cL[\cW_n^{(3)}(\uu)]\,h^3 = L_C^{(2)}\cdot\square^{(1)}\,Z(1)\,h + \cC_{01}h^3\\
D_1 &= -\cL[\cW_n^{(2)}(\uu)\,u]\,h^2 = \cC_{1}\,h^2\\
D_2 &= -\cL[\cW_n^{(2)}(\uu)\,v]\,h^2 = \cC_{2}\,h^2\\
D_3 &= -\cL[\cW_n^{(1)}(\uu)\,u^2]\,h = 2\,(1,0,0)*L_C^{(2)}\cdot\square^{(2)}Z(-1)\,h\\
D_4 &= -\cL[\cW_n^{(1)}(\uu)\,v^2]\,h = 2\,(0,0,1)*L_C^{(2)}\cdot\square^{(2)}Z(-1)\,h\\
D_5 &= -\cL[\cW_n^{(1)}(\uu)\,uv]\,h = 2\,(0,1,0)*L_C^{(2)}\cdot\square^{(2)}Z(-1)\,h
\end{aligned}
\label{eq:D0-D5_lap_slpn}
\end{equation}
where
\begin{align*}
\cC_{01} &=  \Big(L^{(4)}_C\cdot\square^{(2)}+ 2\,L^{(6)}_C\cdot\square^{(3)}Z+ L^{(6)}_A* L^{(2)}_C\cdot\square^{(4)}\Big)Z(-1)\\
\cC_{1} &=  2\Big((1,0)*L^{(3)}_{C}\cdot\square^{(2)}+(1,0)*L^{(3)}_A * L^{(2)}_C\cdot\square^{(3)}\Big)Z(-1)\\
\cC_{2} &= 2\Big((0,1)*L^{(3)}_{C}\cdot\square^{(2)}+(0,1)*L^{(3)}_A * L^{(2)}_C\cdot\square^{(3)}\Big)Z(-1)
\end{align*}
such that
$$
\begin{aligned}
L^{(2)}_C &:= (\nn\cdot\rr_{uu},2\nn\cdot\rr_{uv},\nn\cdot\rr_{vv})\\ 
L^{(3)}_C &:= \left({\nn\cdot\rr_{uuu}}/{3},\nn\cdot\rr_{uuv}, \nn\cdot\rr_{uvv}, {\nn\cdot\rr_{vvv}}/{3}\right)\\
L^{(4)}_C &:= \left({\nn\cdot\rr_{uuuu}}/{6},{2\nn\cdot\rr_{uuuv}}/{3},\nn\cdot\rr_{uuvv},{2\nn\cdot\rr_{uvvv}}/{3},{\nn\cdot\rr_{vvvv}}/{6}\right)\\
L^{(6)}_C &:= L^{(4)}_A* L^{(2)}_C+L^{(3)}_A* L^{(3)}_C
\end{aligned}
$$
and where $L_A^{(3)}, L_A^{(4)}, L_A^{(6)}$ are as defined in Section \ref{sc:app:slp_wigner-type}, and $\square^{(k)}, k=1,2,3,4$, are as defined in (\ref{eq:square(2)},~\ref{eq:square(3)},~\ref{eq:square(1,4)}).

\section{\texorpdfstring{$O(h^5)$}{TEXT} corrected trapezoidal rules for the Helmholtz potentials}
\label{sc:app:helm_corr_h5}

The Helmholtz SLP, DLP, and the normal gradient of SLP (denoted SLPn) are related to the Laplace potentials by the following expansions:
\begin{align*}
\text{SLP:}\quad&\frac{e^{i\kappa r}}{r} = \frac{1}{r}+i\kappa-\frac{\kappa^2}{2}r+O(r^2)\\
\text{DLP:}\quad&\frac{e^{i\kappa r}(1-i\kappa r)}{r}\frac{(-\rr)\cdot(\rr_u\times\rr_v)}{r^2} = \left(\frac{1}{r}+\frac{\kappa^2}{2}r+O(r^2)\right)\frac{-\rr\cdot(\rr_u\times\rr_v)}{r^2}\\
\text{SLPn:}\quad&-\frac{e^{i\kappa r}(1-i\kappa r)}{r}\frac{(-\rr)\cdot\nn(\mathbf{0})}{r^2} = \left(\frac{1}{r}+\frac{\kappa^2}{2}r+O(r^2)\right)\frac{\rr\cdot\nn(\mathbf{0})}{r^2} 
\end{align*}
where the first term in each expansion is the corresponding Laplace kernel. Therefore based on the $O(h^5)$ quadratures for the Laplace potentials, one only needs to include the errors up to $\Theta(h^3)$ from the additional terms to construct the corresponding quadratures for the Helmholtz kernels. Specifically, these relevant additional terms are
\begin{align*}
\text{SLP:}\quad&i\kappa-\frac{\kappa^2}{2}r\\
\text{DLP:}\quad&-\frac{\kappa^2}{2}\frac{\rr\cdot(\rr_u\times\rr_v)}{r}\\
\text{SLPn:}\quad&\frac{\kappa^2}{2}\frac{\rr\cdot\nn(\mathbf{0})}{r}
\end{align*}
where the term $i\kappa$ is regular, all other terms associate to Wigner-type limits that are $O(h^3)$, hence only the leading errors need correction. By analyses similar to Section \ref{sc:high-order_quad}, quadratures for the Helmholtz potentials are constructed as follows:
\begin{itemize}
    \item The $O(h^5)$ quadrature for the Helmholtz SLP uses the same correction weights as the Laplace SLP constructed by \eqref{eq:D0-D5_lap_slp} and \eqref{eq:9pt_weights}, except that $D_0$ is modified as
    $$D_0\leftarrow D_0+\Big(i(\kappa h) + \frac{(\kappa h)^2}{2}Z(-1)\Big)\,h$$
    \item The $O(h^5)$ quadrature for the Helmholtz DLP uses the same correction weights as the Laplace DLP constructed by \eqref{eq:D0-D5_lap_dlp} and \eqref{eq:9pt_weights}, except that $D_0$ is modified as
    $$D_0\leftarrow D_0-\frac{(\kappa h)^2}{2}(L^{(B)}_2\cdot\square^{(1)})Z(-1)\,h$$
    \item The $O(h^5)$ quadrature for the normal gradient of the Helmholtz SLP uses the same correction weights as the corresponding Laplace potential constructed by \eqref{eq:D0-D5_lap_slpn} and \eqref{eq:9pt_weights}, except that $D_0$ is modified as
    $$D_0\leftarrow D_0-\frac{(\kappa h)^2}{2}(L^{(C)}_2\cdot\square^{(1)})Z(-1)\,h$$
\end{itemize}

\section{Formulae for \texorpdfstring{$Z(s)$}{TEXT} and its parametric derivatives}
\label{sc:app:epstein_higher_deriv}

To simplify notations, first define $s_1 :=\frac{s}{2}, s_2 := 1-s_1$ and the customed incomplete gamma function
$$g(s,x) := \int_1^\infty t^{s-1}e^{-\pi xt}\d t = \Gamma(s,\pi x)(\pi x)^{-s}$$
then
\begin{equation}
Z(s) = C_{s_1}(D)\left(-\frac{1}{s_2} - \frac{1}{s_1} + \sum_{i,j}{}'\Big(g(s_1, \tilde{Q}_A)+g(s_2, \tilde{Q}_A)\Big)\right).
\label{eq:z(s)}
\end{equation}
where 
$$
D = EG-F^2,\; C_{s}(D) :=  \frac{\pi^{s}}{\Gamma(s)\sqrt{D}^{s}},\; \tilde{Q}_A(u,v) = \frac{Q_A(u,v)}{\sqrt{D}} = \frac{Eu^2+2Fuv+Gv^2}{\sqrt{D}}.
$$

\subsection{Scalar derivatives of $Z(s)$}

We denote the scalar parametric $k$-th derivative of $Z(s)$ as
$$\square^kZ(s) = \square^kZ(s)\big|_{(L,M,N)} := \big(L\frac{\partial}{\partial E}+M\frac{\partial}{\partial F}+N\frac{\partial}{\partial G}\big)^kZ(s)$$

To derive the derivative formulae based on \eqref{eq:z(s)}, first note the following properties of $g(s,x)$
\begin{align*}
\square\,g(s,x) &= g(s+1,x)\cdot(-\pi\square x)\\
g(s+1,x) &= \frac{s\,g(s,x)+e^{-\pi x}}{\pi x},\quad g(s+2,x) = \frac{(s+1)\,g(s+1,x)+e^{-\pi x}}{\pi x}\\
g(-s_1,x) &= \frac{g(1-s_1,\pi \tilde{Q}_A)\cdot(\pi x) - e^{-\pi x}}{-s_1} =\frac{g(s_2,x)\cdot(\pi x) - e^{-\pi x}}{-s_1}
\end{align*}
these recurrence relations will allow one to evaluate all the derivatives of $Z(s)$ with only one gamma function evaluation at each point, greatly reducing the cost.

The formulae $\square^kZ(s)$ for $k\leq4$ are then given by
\begin{equation}
\square Z(s) = 
    -s_1H_D\,Z(s) -C_{s_1}(D)\sum_{i,j}{}'\Big(g(s_1+1, \tilde{Q}_A)+g(s_2+1, \tilde{Q}_A)\Big)\cdot\pi\square \tilde{Q}_A
\end{equation}
\begin{equation}
\begin{aligned}
\square^2Z(s) &= 
    -\left(s_1\square H_D+(s_1H_D)^2\right)Z(s)-2s_1H_D\,\square\,Z(s)\\
    &+C_{s_1}(D) \sum_{i,j}{}'\Big(g(s_1+2, \tilde{Q}_A)+g(s_2+2,\tilde{Q}_A)\Big)\cdot\left(\pi\square \tilde{Q}_A\right)^2\\
    &-C_{s_1}(D) \sum_{i,j}{}'\Big(g(s_1+1,\tilde{Q}_A)+g(s_2+1, \tilde{Q}_A)\Big)\cdot \pi\square^2 \tilde{Q}_A
\end{aligned}
\end{equation}
\begin{equation}
\begin{aligned}
\square^3Z(s) &=
    -\left(s_1\square^2 H_D+3s_1^2H_D\,\square\,H_D+(s_1H_D)^3\right)Z(s)\\
    &-\left(3s_1\square H_D+3(s_1H_D)^2\right)\square\,Z(s)-3s_1H_D\,\square^2\,Z(s)\\
    &-C_{s_1}(D) \sum_{i,j}{}'\Big(g(s_1+3, \tilde{Q}_A)+g(s_2+3,\tilde{Q}_A)\Big)\cdot\left(\pi\square \tilde{Q}_A\right)^3\\
    &+C_{s_1}(D) \sum_{i,j}{}'\Big(g(s_1+2, \tilde{Q}_A)+g(s_2+2,\tilde{Q}_A)\Big)\cdot3(\pi\square \tilde{Q}_A)(\pi\square^2\tilde{Q}_A)\\
    &-C_{s_1}(D) \sum_{i,j}{}'\Big(g(s_1+1,\tilde{Q}_A)+g(s_2+1, \tilde{Q}_A)\Big)\cdot \pi\square^3 \tilde{Q}_A
\end{aligned}
\end{equation}
\begin{equation}
\begin{aligned}
\square^4Z(s) &=
    -\left(s_1\square^3 H_D+3s_1^2(\square H_D)^2+4s_1^2H_D \square^2H_D+6s_1^3H_D^2\square H_D+(s_1H_D)^4\right)Z(s)\\
    &-\left(4s_1\square^2H_D+12s_1^2H_D\square H_D+4(s_1H_D)^3\right)\square Z(s)\\
    &-\left(6s_1\square H_D+6(s_1H_D)^2\right)\square^2Z(s)-4s_1H_D\,\square^3Z(s)\\
    &+C_{s_1}(D) \sum_{i,j}{}'\Big(g(s_1+4, \tilde{Q}_A)+g(s_2+4,\tilde{Q}_A)\Big)\cdot\left(\pi\square \tilde{Q}_A\right)^4\\
    &-C_{s_1}(D) \sum_{i,j}{}'\Big(g(s_1+3, \tilde{Q}_A)+g(s_2+3,\tilde{Q}_A)\Big)\cdot6(\pi\square \tilde{Q}_A)^2(\pi\square^2\tilde{Q}_A)\\
    &+C_{s_1}(D) \sum_{i,j}{}'\Big(g(s_1+2, \tilde{Q}_A)+g(s_2+2,\tilde{Q}_A)\Big)\cdot\Big(4(\pi\square \tilde{Q}_A)(\pi\square^3\tilde{Q}_A)+3(\pi\square^2\tilde{Q}_A)^2\Big)\\
    &-C_{s_1}(D) \sum_{i,j}{}'\Big(g(s_1+1,\tilde{Q}_A)+g(s_2+1, \tilde{Q}_A)\Big)\cdot \pi\square^4 \tilde{Q}_A
\end{aligned}
\end{equation}
To further reduce costs, recall for example the $O(h^3)$ errors used $Z(-1)$ while the $O(h)$ errors used $Z(1)$, thus one can compute $Z(s+2)$ along the way of computing $Z(s)$, with
\begin{equation*}
\begin{aligned}
Z(s+2) =& C_{s_1+1}(D)\left(\frac{1}{s_1} - \frac{1}{s_1+1} + \sum_{i,j}{}'\Big(g(s_1+1, \tilde{Q}_A)+g(-s_1, \tilde{Q}_A)\Big)\right),\\
\square Z(s+2) =& -(s_1+1)H_D\,Z(s+2) -C_{s_1+1}(D)\sum_{i,j}{}'\Big(g(s_1+2, \tilde{Q}_A)+g(s_2, \tilde{Q}_A)\Big)\cdot\pi\square \tilde{Q}_A,
\end{aligned}
\end{equation*}

In all the formulae for the Epstein zeta function, the various involved quantities are defined as follows.

Firstly, the involved quadratic forms and their derivatives are
\begin{align*}
\square \tilde{Q}_A =& \tilde Q_B - H_D\,\tilde Q_A,\quad \square^2 \tilde{Q}_A = \square\tilde Q_B- (\square H_D\,\tilde Q_A+H_D\,\square\tilde Q_A)\\
\square^3 \tilde{Q}_A =& \square^2\tilde Q_B- (\square^2 H_D\,\tilde Q_A+2\square H_D\,\square\,\tilde Q_A+H_D\,\square^2\tilde Q_A)\\
\square^4 \tilde{Q}_A =& \square^3\tilde Q_B- \Big(\square^3 H_D\,\tilde Q_A+3\square^2 H_D\square\tilde Q_A+3\square H_D\square^2\tilde Q_A+H_D\square^3\tilde Q_A\Big)\\
\square \tilde{Q}_B=&-H_D\,\tilde{Q}_B,\; \square^2 \tilde{Q}_B = (-\square\,H_D+H_D^2)\tilde{Q}_B,\;\square^3 \tilde{Q}_B = (-\square^2H_D+3H_D\square H_D-H_D^3)\tilde{Q}_B
\end{align*}
where
$$
\tilde Q_B(u,v) :=\frac{Q_B(u,v)}{\sqrt{D}},\quad Q_B(u,v) := Lu^2+2Muv+Nv^2.
$$
and other involved constants such as $H_D$ and their parametric derivatives are given by
\begin{align*}
\square C_{s}(D) &= -sH_D\cdot C_{s}(D),\quad C_{s+1}(D) =C_{s}(D)\cdot\pi/(s\,\sqrt{D}),\\
H_D &:= (GL+EN-2FM)/(2D) =: (\tilde G\tilde L+\tilde E\tilde N-2\tilde F\tilde M)/2,\\
K_D &:=\tilde{L}\tilde{N}-\tilde{M},\quad \square K_D = -2H_D\,K_D,\\
\square H_D &=K_D-2H_D^2,\quad \square^2 H_D =-2H_D\,K_D-4H_D\,\square\,H_D\\
\square^3 H_D &=(4H_D^2-2\square H_D)K_D-4(\square H_D)^2-4H_D\,\square^2H_D
\end{align*}
Note that if $L,M,N$ are the second fundamental form coefficients, then $H_D$ and $K_D$ become the mean and Gaussian curvatures.

\subsection{Vector-valued derivatives $\square^{(k)}Z(s)$ via the scalar derivatives $\square^{k}Z(s)$}

We show that the vector-valued $k$-th derivative $\square^{(k)}Z(s)$, as defined in (\ref{eq:square(2)},~\ref{eq:square(3)},~\ref{eq:square(1,4)}), can be expressed as a combination of scalar derivatives $\square^{k}Z(s)$, for $k=2,3,4$.

Firstly, the 5-component vector $\square^{(2)}Z(s)$ can be computed via 5 scalar second derivatives:
\begin{align*}
a &= \frac{\partial^2}{\partial E^2}Z_A,\quad b = \frac{1}{4}\frac{\partial^2}{\partial F^2}Z_A,\quad c = \frac{\partial^2}{\partial G^2}Z_A\\
d &= \Big(\frac{\partial}{\partial E}+\frac{1}{2}\frac{\partial}{\partial F}\Big)^2Z_A, \quad e = \Big(\frac{\partial}{\partial G}+\frac{1}{2}\frac{\partial}{\partial F}\Big)^2Z_A\\
\frac{1}{2}&\frac{\partial^2}{\partial E\partial F}Z_A = \frac{d-a-b}{2},\quad \frac{1}{2}\frac{\partial^2}{\partial F\partial G}Z_A = \frac{e-b-c}{2},
\end{align*}
so
$$\square^{(2)}Z_A = \left(a, \frac{d-a-b}{2}, b, \frac{e-b-c}{2}, c\right).$$
Secondly, the 7-component vector $\square^{(3)}Z(s)$ can be computed via 7 scalar third derivatives:
\begin{align*}
a &= \frac{\partial^3}{\partial E^3}Z_A,\quad b = \frac{1}{8}\frac{\partial^3}{\partial F^3}Z_A,\quad c = \frac{\partial^3}{\partial G^3}Z_A,\\
d &= \Big(\frac{\partial}{\partial E}+\frac{1}{2}\frac{\partial}{\partial F}\Big)^3Z_A, \quad e = \Big(\frac{\partial}{\partial E}-\frac{1}{2}\frac{\partial}{\partial F}\Big)^3Z_A,\\
f &= \Big(\frac{\partial}{\partial G}+\frac{1}{2}\frac{\partial}{\partial F}\Big)^3Z_A, \quad g = \Big(\frac{\partial}{\partial G}-\frac{1}{2}\frac{\partial}{\partial F}\Big)^3Z_A,\\
\frac{1}{4}\frac{\partial^3}{\partial E\partial F^2}Z_A &= \frac{d+e-2a}{6},\quad \frac{1}{2}\frac{\partial^3}{\partial E^2\partial F}Z_A = \frac{d-e-2b}{6},\\
\frac{1}{4}\frac{\partial^3}{\partial F^2\partial G}Z_A &= \frac{f+g-2c}{6},\quad \frac{1}{2}\frac{\partial^3}{\partial F\partial G^2}Z_A = \frac{f-g-2b}{6},
\end{align*}
so
$$\square^{(3)}Z_A = \left(a, \frac{d-e-2b}{6}, \frac{d+e-2a}{6}, b, \frac{f+g-2c}{6}, \frac{f-g-2b}{6}, c\right).$$
Lastly, to compute $\square^{(4)}$, we first define
$$
a_{p,q,r} :=\square^4\big|_{(L,M,N)=(p,q/2,r)} = \left(p\frac{\pd}{\pd E} +\frac{q}{2}\frac{\pd}{\pd F}+ r\frac{\pd}{\pd G}\right)^4
$$
and denote the 9 components of $\square^{(4)}$ as $\square^{(4)}_i$, $i=1,\dots,9$, then
$$
\square^{(4)}_{1,2,3,4,5} = M^{-1}  \mathbf{a}^{EF},\qquad \square^{(4)}_{9,8,7,6,5} = M^{-1} \mathbf{a}^{GF}
$$
where
$$
M^{-1} =
\begin{bmatrix}
 1                &  0                & 0                  & 0                   &  0\\
-\frac{1}{8}  & \frac{1}{4}   & -\frac{1}{24} & -\frac{1}{12}  &  \frac{1}{2}\\
-\frac{1}{6}  & \frac{1}{12} &                 0  & \frac{1}{12}   & -\frac{1}{6}\\
\frac{1}{8}   & -\frac{1}{8}  & \frac{1}{24}  & -\frac{1}{24}  & -\frac{1}{2}\\
0                 & 0                 & 0                  & 0                   & 1
\end{bmatrix}
,
\mathbf{a}^{EF} = 
\begin{bmatrix}
a_{1,0,0}\\a_{1,1,0}\\a_{1,2,0}\\a_{1,-1,0}\\a_{0,1,0},
\end{bmatrix}
,
\mathbf{a}^{GF} =
\begin{bmatrix}
a_{0,0,1}\\a_{0,1,1}\\a_{0,2,1}\\a_{0,-1,1}\\a_{0,1,0}.
\end{bmatrix}
$$

\section{Proof of the convergence rate of Wigner-type limits}
\label{sc:app:proof}
\begin{remark}
    The proofs of Theorem \ref{thm:wigner-type_converge_rate_1} and Lemma \ref{lemma:long_fourier_analysis} in this section are inspired by \cite[Theorem 3.1, Lemma 3.3]{marin2014corrected}, which are the one-dimensional analogs. As a result, our proofs complete the convergence analysis in \cite{marin2014corrected} for their two-dimensional quadrature.
\end{remark}

Throughout this section, $\eta(\uu)\in C_c^{\beta}([-a,a]^2)$ is a $\beta$-times continuously differentiable function that is compactly supported in $[-a,a]^2$, and satisfies
\begin{equation}
    \begin{aligned}
    \eta(\mathbf{0})=1,\; \eta(\uu) \equiv \eta(-\uu),\; \text{and}\; \frac{\partial^{i+j}}{\partial u^i\partial v^j}\eta(\mathbf{0}) = 0
    \end{aligned}
    \label{eq:eta_condition}
\end{equation}
for all non-negative integers $i$ and $j$ such that $0<i+j\leq 2K.$

We restate the definition \eqref{eq:unconverged_wigner-type} as follows:
\begin{equation}
\begin{aligned}
    \cL^m_h[f]:=& \lim_{N\to\infty}\sideset{}'\sum_{|\ii |<N} \frac{ f(\ii )}{Q(\ii )^{m+\frac{1}{2}}}\eta(\ii h) - \int\limits_{|\uu|<N+\frac{1}{2}}\frac{ f(\uu)}{Q(\uu)^{m+\frac{1}{2}}}\eta(\uu h)\,\d\uu\\
    =&\sideset{}'\sum_{|\ii |<\infty} \frac{ f(\ii )}{Q(\ii )^{m+\frac{1}{2}}}\eta(\ii h) - \int\limits_{|\uu|<\infty}\frac{ f(\uu)}{Q(\uu)^{m+\frac{1}{2}}}\eta(\uu h)\,\d\uu
\end{aligned}
\end{equation}
with
$$
Q(\uu) = Eu^2+2Fuv+Gv^2,\quad \text{ where }E,\,G,\,EG-F^2>0.
$$

\begin{theorem}
\label{thm:wigner-type_converge_rate}
Using the notations in Theorem \ref{thm:converged_wigner-type}, then for any integer $K \geq 1$ and  $m, d \geq 0$ such that
$$
\begin{aligned}
0&\leq m\leq 2K-1,\\
\frac{3m}{2}&\leq d\leq m+K-1,
\end{aligned}
$$
the estimate
    \begin{equation}
        \left|\cL^m_h\big[u^{2d-l}v^l\big]  - \cL^m\big[u^{2d-l}v^l\big]\right| = O(h^{2K}),\qquad l = 0,1,\dots, 2d
    \end{equation}
holds provided that $\eta(\uu)$ satisfies \eqref{eq:eta_condition} and is $4K$-times contiuously differentiable.
\end{theorem}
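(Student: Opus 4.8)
The plan is to reduce the problem to a Fourier-analytic estimate on the discrepancy between a lattice sum and its corresponding integral, following the one-dimensional template of Marin et.\ al. The key observation is that, after rescaling $\uu \mapsto \uu/h$ in the integral, the quantity $\cL^m_h[u^{2d-l}v^l]$ is exactly of the form
\begin{equation}
\sum_{\ii}{}' g(\ii)\,\eta(\ii h) - \int_{\RR^2} g(\uu)\,\eta(\uu h)\,\d\uu, \qquad g(\uu) := \frac{u^{2d-l}v^l}{Q(\uu)^{m+\frac12}},
\end{equation}
where $g$ is homogeneous of degree $2d - 2m - 1$, smooth away from the origin, and even (since $2d$ is even). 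First I would isolate the behaviour near the origin: because $g$ has a (generally non-integrable when $2d-2m-1 \le -2$) singularity there, I would show that the ``$\ii = \mathbf{0}$'' contribution is handled by the punctured nature of the sum together with the vanishing of the low-order Taylor coefficients of $\eta$ at $\mathbf{0}$ imposed by \eqref{eq:eta_condition} — this is precisely why $\eta$ is required to have all derivatives of order $\le 2K$ vanish at $\mathbf{0}$. Away from the origin $g$ decays like $|\uu|^{2d-2m-1}$, and since the constraint $d \le m + K - 1$ gives $2d - 2m - 1 \le 2K - 3$, one controls the tails after sufficiently many applications of summation-by-parts / integration-by-parts against the smooth compactly supported cutoff $\eta(\cdot\, h)$.

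The core of the argument is a Poisson-summation / Euler–Maclaurin-type identity: the difference between the lattice sum and the integral equals a sum over nonzero dual-lattice frequencies $\kk \neq \mathbf{0}$ of $\widehat{(g\cdot\eta(\cdot h))}(\kk)$ (with the $\kk=\mathbf{0}$ term cancelling the integral, modulo the careful treatment of the origin singularity just described). I would then estimate each nonzero-frequency Fourier coefficient. Writing $\eta(\uu h)$ has Fourier transform $h^{-2}\widehat{\eta}(\kk/h)$, which is rapidly decaying in $\kk/h$; convolving against $\widehat{g}$ (a homogeneous distribution of degree $-2 - (2d-2m-1) = 2m-2d-1$, plus possibly a polynomial and a $\log$ when the degree is an even integer) and summing over $\kk \in 2\pi\ZZ^2 \setminus \{\mathbf{0}\}$ produces, after the rescaling is undone, a bound of the shape $O(h^{N})$ for $N$ as large as we like — limited only by the smoothness of $\eta$, which is why $4K$-fold differentiability suffices to beat $h^{2K}$. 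This is essentially the content of the (referenced but not-yet-stated) Lemma \ref{lemma:long_fourier_analysis} and Theorem \ref{thm:wigner-type_converge_rate_1}, which I would invoke or reprove in this homogeneous, two-dimensional setting; the even symmetry of $g$ and of $\eta$ ensures the leading would-be $O(h)$, $O(h^3), \dots$ correction terms that are \emph{odd} in $\kk$ cancel in pairs $\kk \leftrightarrow -\kk$, so the surviving error is genuinely $O(h^{2K})$ rather than $O(h^{2K-1})$.

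The step I expect to be the main obstacle is the uniform control of the Fourier coefficients of the product $g\cdot\eta(\cdot h)$ near the frequency origin, i.e.\ separating the genuinely singular part of $\widehat{g}$ (the homogeneous-distribution piece, including the logarithmic resonance when $2m-2d-1$ is a negative even integer) from the smooth part, and verifying that the logarithmic term does not spoil the power of $h$. Concretely, one must argue that the contribution of the homogeneous singularity of $g$ at $\mathbf{0}$, once paired with the high-order-flat cutoff $\eta(\uu h)$, yields exactly the analytically-continued value $\cL^m[u^{2d-l}v^l]$ — this is where Theorem \ref{thm:converged_wigner-type} (identifying that limit with a parametric derivative of the Epstein zeta function) must be reconciled with the Fourier-side computation — and that the remainder after subtracting it is $O(h^{2K})$. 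Once that matching is established, assembling the pieces is routine bookkeeping in the exponents, tracking the constraints $\tfrac{3m}{2} \le d \le m+K-1$ to ensure every intermediate integration-by-parts and every dual-lattice tail sum converges.
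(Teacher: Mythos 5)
Your overall architecture---cut off a neighborhood of the origin, apply Poisson summation to the smooth, compactly supported remainder, and use the flatness of $\eta$ at $\mathbf{0}$ to control what is left near the origin---is the same skeleton as the paper's proof (Theorem~\ref{thm:wigner-type_converge_rate_1} and Lemma~\ref{lemma:long_fourier_analysis}). However, your sketch leaves unresolved precisely the step that the paper's argument is designed to make trivial, and this is a genuine gap. You propose to identify, on the Fourier side, the contribution of the homogeneous singularity of $g$ paired with the flat cutoff with the analytically continued value $\cL^m\big[u^{2d-l}v^l\big]$, and you flag this ``matching'' as the main obstacle. The paper never performs any such reconciliation: it shows that the discrepancy equals $Z_1(s)-Z_2(s)+O(h^{2K})$, where $Z_1(s)=\sum_{|\kk|\neq0}W(2\pi\kk)/(2\pi k)^{\beta}$ and $Z_2(s)$ (the near-origin integral with $\eta$ replaced by $1$) are manifestly independent of $h$; since Theorem~\ref{thm:converged_wigner-type} has already established, by analytic continuation, that $\lim_{h\to0}\cL_h^{(s-1)/2}[1]=Z(s)$, the $h$-independent constant $Z_1(s)-Z_2(s)$ must equal $Z(s)$, and no computation of homogeneous-distribution Fourier transforms or of logarithmic resonances is ever needed. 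Without either this soft argument or an actual execution of the distributional matching you describe, your proof is incomplete at its central point. Relatedly, the paper proves only the base case $\cL_h^{(s-1)/2}[1]$ and then obtains every monomial $u^{2d-l}v^l$ by parametric differentiation in $E,F,G$ under the limit (as in Theorem~\ref{thm:converged_wigner-type}), noting that such differentiation does not change the homogeneity degree $-s$ of $Q(\uu)^{-s/2}$, so the estimates persist; working directly with $g=u^{2d-l}v^l\,Q^{-m-\frac12}$ is viable but forces you to redo the entire analysis for each numerator instead of once.

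Two further inaccuracies in the mechanism you describe. First, the $O(h^{2K})$ rate does not come from a parity cancellation over $\kk\leftrightarrow-\kk$; it comes from the flatness of $\eta$, used twice: as $|\eta(\uu h)-1|\leq C|\uu h|^{2K}$ in the near-origin integral, and as $|\eta^{(\ell)}(\uu h)-\delta_{0,\ell}|\leq C|\uu h|^{2K+1-\ell}$ inside the estimates of the nonzero-frequency coefficients. Hence the attainable power of $h$ is limited by the flatness order of $\eta$ at $\mathbf{0}$, not ``only by the smoothness of $\eta$''; the smoothness $\beta=4K$ enters separately, providing the $k^{-\beta}$ decay that makes the dual-lattice sum converge and, through $\beta+s\geq 2K+3$, absorbing the tail contributions of size $h^{\beta+s-2}$. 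Second, under the hypotheses of the theorem one has $d\geq\frac{3m}{2}$, so the homogeneity degree $2d-2m-1\geq-1$ and the singularity of $g$ is always locally integrable in $\RR^2$; the non-integrable case you hedge against cannot occur, and the genuine difficulty at the origin is only the puncturing of the sum and the slow decay of the Fourier coefficients, which the paper handles with the cutoff $\phi$ supported away from $\mathbf{0}$.
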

\begin{proof}
Let $s = 2(m-d)+1$ then $3-2K \leq s \leq 1$, then using Theorem \ref{thm:wigner-type_converge_rate_1} (to be proved below) with $\beta = 4K$ yields
$$
\cL_h^{m-d}[1] = Z(2(m-d)+1) + O(h^{2K}).
$$
Taking appropriate parametric derivatives on both sides according to Theorem \ref{thm:converged_wigner-type} gives
$$
\cL_h^{m}\big[u^{2d-l}v^l\big] = \cL^m\big[u^{2d-l}v^l\big] + O(h^{2K})
$$
where the order of the error term remains unchanged because of the following reason. 

Parametric differentiation in the form of Theorem \ref{thm:converged_wigner-type} only acts on the component $Q(\uu)^{-s/2}$ and is in such a way that does not change its homogeneous order in $\uu$, which is $\Theta(|\uu|^{-s})$. Therefore the analysis in Theorem \ref{thm:wigner-type_converge_rate_1} holds under parametric differentiation.
\qed\end{proof}

\begin{theorem} 
\label{thm:wigner-type_converge_rate_1}
If $s\in\RR, s\leq1<2K$ and $\eta$ satisfies \eqref{eq:eta_condition} and is at least $\beta$ times continuously differentiable such that $\beta + s \geq 2K+3$, $K\geq1$, then
$$\cL_h^{(s-1)/2}[1] = Z(s) + O(h^{2K})$$
\end{theorem}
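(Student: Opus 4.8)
�The plan is to analyze the quantity
$$
\cL_h^{(s-1)/2}[1] = \sideset{}'\sum_{|\ii|<\infty}\frac{\eta(\ii h)}{Q(\ii)^{s/2}} - \int_{|\uu|<\infty}\frac{\eta(\uu h)}{Q(\uu)^{s/2}}\,\d\uu
$$
via the Poisson summation formula, treating the sum over $\ZZ^2$ as the integral plus the ``dual'' lattice contributions. More precisely, I would set $g(\uu) := Q(\uu)^{-s/2}\eta(\uu h)$ (with the singularity at $\uu=\mathbf 0$ subtracted, since the sum is punctured and $\eta(\mathbf 0)=1$) and write the difference between the lattice sum and the integral as $\sum_{\kk\in\ZZ^2\setminus\{\mathbf 0\}}\hat g(\kk)$, the sum of the nonzero Fourier modes. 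Because $\eta(\uu h)$ is smooth and compactly supported, $\hat g(\kk)$ decays; the task is to quantify that decay in $h$. This is where the homogeneity of $Q^{-s/2}$ enters: rescaling $\uu = \xx/h$ shows $\hat g(\kk)$ is essentially $h^{s}$ times the Fourier transform of $|\xx|^{-s}$-type data against the oscillation $e^{2\pi i \kk\cdot\xx/h}$, so one gains powers of $h$ from each integration by parts against the smooth bump $\eta$. The vanishing-moment conditions \eqref{eq:eta_condition} on $\eta$ up to order $2K$ are exactly what kill the low-order terms that would otherwise obstruct an $O(h^{2K})$ bound — this is the role of the hypothesis, and it mirrors the 1D argument of \cite{marin2014corrected}.

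The key steps, in order: (1) Reduce to a statement about a single Fourier mode by invoking Lemma \ref{lemma:long_fourier_analysis} (the 2D analog of \cite[Lemma 3.3]{marin2014corrected}), which I would state and prove separately; it should say that for a fixed nonzero frequency, the relevant oscillatory integral of $Q(\uu)^{-s/2}\eta(\uu h)$ against $e^{2\pi i\kk\cdot\uu}$ is $O(h^{2K})$ provided $\eta$ has the vanishing derivatives \eqref{eq:eta_condition} and enough smoothness ($\beta$ derivatives, with the margin $\beta+s\ge 2K+3$ ensuring the sum over $\kk$ converges after we extract $h^{2K}$). (2) Split the lattice sum into the $\kk=\mathbf 0$ term, which reproduces exactly $Z(s)$ by Theorem \ref{thm:wigner_lim} together with the identity $\lim_{h\to 0}\cL_h^{(s-1)/2}[1]=Z(s)$ already established in \eqref{eq:limiting;thm:converged_wigner-type}–\eqref{eq:Z;thm:converged_wigner-type}, plus a controlled remainder coming from the fact that $\eta(\uu h)\ne 1$ away from the origin; this remainder is handled because $\eta-1$ vanishes to order $2K$ at $\mathbf 0$ after rescaling. (3) Sum the bounds from step (1) over all $\kk\ne\mathbf 0$; convergence of $\sum_{\kk}|\kk|^{-(\beta+s)}$ or similar requires $\beta+s>2$, which is implied by $\beta+s\ge 2K+3\ge 5$. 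Assembling these gives $\cL_h^{(s-1)/2}[1]=Z(s)+O(h^{2K})$.

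The main obstacle I anticipate is step (1): carefully tracking the $h$-dependence through the oscillatory integral when the integrand has a non-integrable-looking $Q(\uu)^{-s/2}$ factor for $s$ near $1$ (and worse as $s\to 3-2K$). The singularity at $\uu=\mathbf 0$ is mild enough to be integrable in 2D for $s<2$, but the rescaling $\uu\mapsto\uu/h$ concentrates mass near the origin, so one must argue that the subtracted singular point and the smoothness of $\eta$ away from $\mathbf 0$ together let us integrate by parts $2K$ times without the boundary/singular terms spoiling the estimate. The cleanest route is to dyadically decompose in $|\uu|$: on the region $|\uu|\lesssim 1$ use the vanishing moments of $\eta(\cdot\,h)-\eta(\mathbf 0)$ directly (Taylor expansion plus the homogeneity of $Q^{-s/2}$), and on $|\uu|\gtrsim 1$ integrate by parts against the oscillation, picking up $(|\kk|/h)^{-2K}$ and using that derivatives of $Q(\uu)^{-s/2}\eta(\uu h)$ are controlled by the $C^\beta$ norm of $\eta$ times powers of $h$. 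Matching the two regimes and summing gives the claimed rate; this decomposition is exactly the content I would push into Lemma \ref{lemma:long_fourier_analysis}, keeping the proof of Theorem \ref{thm:wigner-type_converge_rate_1} itself short.
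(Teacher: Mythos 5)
Your overall strategy is the one the paper uses: convert the lattice-sum-minus-integral into a sum of nonzero Fourier modes via Poisson summation, integrate by parts $\beta$ times to gain $k^{-\beta}$ (with $\beta+s\geq 2K+3$ ensuring summability over $\kk$), use the vanishing derivatives \eqref{eq:eta_condition} of $\eta$ up to order $2K$ to extract the powers of $h$, and identify the remaining $h$-independent constant with $Z(s)$ through the already-established limit $\lim_{h\to0}\cL_h^{(s-1)/2}[1]=Z(s)$. However, there is a genuine gap in how you set up the Poisson step. You propose to apply the Poisson summation formula directly to $g(\uu)=Q(\uu)^{-s/2}\eta(\uu h)$, with the punctured origin handled only by the phrase ``with the singularity subtracted,'' and to deal with the near-origin difficulty afterwards by a dyadic decomposition on the Fourier side. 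As written this step does not go through: for $0<s\leq 1$ the function $g$ is unbounded at the lattice point $\mathbf{0}$, the punctured sum has no counterpart in the classical Poisson identity, and the singularity makes $\hat g(\kk)$ decay only algebraically like $|\kk|^{s-2}$, so $\sum_{\kk\neq\mathbf{0}}\hat g(\kk)$ is not absolutely convergent and the identity ``punctured sum minus integral equals sum of nonzero modes'' requires a justification you never supply. The paper resolves exactly this by performing the splitting \emph{before} invoking Poisson: a smooth radial cutoff $\phi$ with $\phi\equiv 0$ for $|\uu|\leq\tfrac12$ and $\phi\equiv1$ for $|\uu|\geq1$ is inserted, Poisson summation is applied only to the $C^\beta$ compactly supported function $f\phi$ (for which the punctured and full sums coincide since $\phi(\mathbf{0})=0$), and the leftover piece $I_2=\int_{|\uu|\leq1}f(\uu,h)(1-\phi(\uu))\,\d\uu$ is estimated directly, using $|\eta(\uu h)-1|\leq C|\uu h|^{2K}$, as an $h$-independent constant $Z_2(s)$ plus $O(h^{2K})$. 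Your dyadic idea is morally this same decomposition, but it must live on the physical side, prior to Poisson summation, not inside the estimate of a single mode $\hat g(\kk)$; the mode-by-mode estimate is then the content of Lemma \ref{lemma:long_fourier_analysis}, which compares each $\hat f_\phi(\kk,h)$ with an $h$-independent quantity $W(2\pi\kk)/(2\pi k)^{\beta}$ to accuracy $O(h^{2K+1})k^{-\beta}$.

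A secondary inaccuracy: the $\kk=\mathbf{0}$ mode does not ``reproduce exactly $Z(s)$'' --- it is precisely the integral and cancels against the integral already present in $\cL_h^{(s-1)/2}[1]$. The value $Z(s)$ enters only at the very end: after showing that $\cL_h^{(s-1)/2}[1]$ equals an $h$-independent quantity (in the paper, $Z_1(s)-Z_2(s)$) up to $O(h^{2K})$, one concludes that this quantity must coincide with $\lim_{h\to0}\cL_h^{(s-1)/2}[1]=Z(s)$ from Theorem \ref{thm:converged_wigner-type}. You do invoke this limit identity, so that part of your plan is sound once rephrased, but the splitting-before-Poisson repair above is needed for the argument to be rigorous.
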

\begin{proof}
To analyze the convergence property of
$$\cL_h^{(s-1)/2}[1] = \sideset{}'\sum_{|\ii |<\infty}\frac{\eta(\ii h)}{Q(\ii )^{\frac{s}{2}}} - \int_{|\uu|<\infty}\frac{\eta(\uu h)}{Q(\uu)^{\frac{s}{2}}}\,\d\uu,$$
we define a smooth cutoff function $\phi(\uu)$, such that $\phi(\uu) \equiv \phi(-\uu)$ and
$$
\phi(\uu) = \begin{cases}
0 & |\uu|\leq \frac{1}{2},\\
1 & |\uu|\geq 1.
\end{cases}
$$
and denote $f(\uu,h) := \eta(\uu h)\,Q(\uu)^{-\frac{s}{2}}$, then a partition of unity using $\phi(\uu)$ gives
$$
\begin{aligned}
\cL_h^{(s-1)/2}[1] =&\sideset{}'\sum_{|\ii |<\infty}f(\ii ,h)\phi(\ii ) - \int_{|\uu|<\infty}f(\uu,h)\phi(\uu)\,\d\uu\\
&+\sideset{}'\sum_{|\ii |<\infty}f(\ii ,h)(1-\phi(\ii )) - \int_{|\uu|<\infty}f(\uu,h)(1-\phi(\uu))\,\d\uu\\
=& \left(\sum_{|\ii |<\infty}f(\ii ,h)\phi(\ii ) - \int_{|\uu|<\infty}f(\uu,h)\phi(\uu)\,\d\uu\right) - \int_{|\uu|\leq1}f(\uu,h)(1-\phi(\uu))\,\d\uu\\
:=& I_1-I_2
\end{aligned}
$$
where the fact that $f(\mathbf{0},h)\phi(\mathbf{0}) = 0$ and $1-\phi(\ii)=0$ if $\ii\neq\mathbf{0}$ are used. We then estimate the quantities $I_1$ and $I_2$.
\begin{itemize}
\item We first bound $I_2$. Use $|\eta(\uu h)-1|\leq C' |\uu h|^{2K}$ for some constant $C'$, then
$$
\begin{aligned}
&\left| \int_{|\uu|\leq1}(\eta(\uu h)-1)\,Q(\uu)^{-\frac{s}{2}}(1-\phi(\uu))\,\d\uu\right|\\ &\leq  C' h^{2K} \int_{|\uu|\leq1}|\uu|^{2K}|Q(\uu)|^{-\frac{s}{2}} |1-\phi(\uu)|\d\uu\leq Ch^{2K}
\end{aligned}
$$
where we used the fact $|\uu|^{2K}|Q(\uu)|^{-s/2} = O(|\uu|^{2K-s})$ is integrable since $2K-s>0$. So, if we define
$$Z_2(s) = \int_{|\uu|\leq1}Q(\uu)^{-\frac{s}{2}}(1-\phi(\uu))\,\d\uu$$
which is independent of $h$ and integrable for $s\leq1$, then
$$
\begin{aligned}
|I_2 &- Z_2(s)| = \left| \int_{|\uu|\leq1}(\eta(\uu h)-1)\,Q(\uu)^{-\frac{s}{2}}(1-\phi(\uu))\,\d\uu\right| \leq Ch^{2K}
\end{aligned}
$$
\item For the summation and integral in $I_1$, since the function $f(\uu,h)$ is compactly supported, the Poisson summation formula (cf. \cite[Chapter 4]{pinsky2008introduction}) gives
\begin{equation}
    \sum_{|\ii |<\infty}f(\ii ,h)\phi(\ii ) - \int_{|\uu|<\infty}f(\uu,h)\phi(\uu)\,\d\uu = \sum_{|\kk|\neq0} \hat{f}_\phi(\kk,h)
\label{eq:poisson_sum}
\end{equation}
where $\kk = (k_1,k_2)\in\ZZ^2$,
$$\hat{f}_\phi(\kk,h) := \int_{|\uu|<\infty}f(\uu,h)\phi(\uu)e^{-2\pi i \kk\cdot \uu}\,\d\uu.$$
Using Lemma \ref{lemma:long_fourier_analysis} which we will prove next, there is a smooth function $Z_1(s)$ independent of $h$, such that
$$\left|\sum_{|\kk|\neq0} \hat{f}_\phi(\kk,h)-Z_1(s)\right| = O(h^{2K+1})$$
\end{itemize}
Combining all of above, we have
$$\left|\cL_h^{(s-1)/2}[1] - Z_1(s)+Z_2(s)\right| = O(h^{2K})$$
On the other hand, by Theorem \ref{thm:converged_wigner-type} $$\lim_{h\to0}\cL_h^{(s-1)/2}[1] = Z(s)$$ is analytic. Since both $Z_1$ and $Z_2$ are smooth and neither of them depend on $h$, one must have
$$Z_1(s)-Z_2(s)\equiv Z(s)$$
\qed\end{proof}

\begin{lemma}
\label{lemma:long_fourier_analysis}
Assume that $s\leq 1$ and $\eta$ satisfies \eqref{eq:eta_condition} and is at least $\beta$ times continuously differentiable such that $\beta + s \geq 2K+3$, $K\geq1$. Then for $\hat{f}_\phi(\kk,h)$ as defined in the Poisson summation formula \eqref{eq:poisson_sum}, there is a smooth function $Z_1(s)$ that does not depend on $h$, such that
$$\left|\sum_{|\kk|\neq0} \hat{f}_\phi(\kk,h) - Z_1(s)\right| = O(h^{2K+1})$$
\end{lemma}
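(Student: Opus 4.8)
The plan is to analyze the Fourier coefficients $\hat f_\phi(\kk,h)$ one $\kk$ at a time by repeated integration by parts, pulling out the power $h^{2K+1}$ from the vanishing of the low-order derivatives of $\eta$ at the origin, and getting enough decay in $\kk$ from the $C^\beta$-smoothness of $\eta$. First I would set $G(\uu):=\phi(\uu)Q(\uu)^{-s/2}$. Since $\phi$ vanishes on $\{|\uu|\le\tfrac12\}$, $G$ is a genuine $C^\infty$ function on $\RR^2$ that vanishes near $\mathbf0$ and equals the homogeneous (degree $-s$) function $Q^{-s/2}$ outside the unit ball; thus $f(\uu,h)\phi(\uu)=\eta(\uu h)G(\uu)$ is $C^\beta$ and supported in $\{\tfrac12\le|\uu|\le a/h\}$. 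Fixing $\kk\neq\mathbf0$ and choosing an index $\ell=\ell(\kk)$ with $|k_\ell|=\max(|k_1|,|k_2|)\ge|\kk|/\sqrt2$, I would integrate by parts $M$ times in $u_\ell$ (no boundary terms, by compact support) and apply the Leibniz rule to get
\[
\hat f_\phi(\kk,h)=\frac1{(2\pi i k_\ell)^{M}}\sum_{j=0}^{M}\binom Mj h^{\,j}\int (\partial^{\,j}_{u_\ell}\eta)(\uu h)\,(\partial^{\,M-j}_{u_\ell}G)(\uu)\,e^{-2\pi i\kk\cdot\uu}\,\d\uu .
\]
In the $j=0$ term I would split $\eta(\uu h)=1+(\eta(\uu h)-1)$; the ``$1$'' piece produces the $h$-independent quantity $b_\kk(s):=(2\pi i k_\ell)^{-M}\int_{\RR^2}(\partial^{M}_{u_\ell}G)(\uu)\,e^{-2\pi i\kk\cdot\uu}\,\d\uu$ (an absolutely convergent integral once $M$ is chosen large enough below), which is $\widehat G(\kk)$ and hence independent of the choice of $\ell$; indeed $b_\kk(s)=\lim_{h\to0}\hat f_\phi(\kk,h)$ once the error bounds below are in hand.

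Next I would choose $M$ large enough that $\partial^{M}_{u_\ell}G=O(|\uu|^{-s-M})$ lies in $L^1(\RR^2)$ and $M>2$; then $|b_\kk(s)|\le C_M|\kk|^{-M}$, so I may define $Z_1(s):=\sum_{\kk\neq\mathbf0}b_\kk(s)$, an absolutely convergent series. Smoothness of $Z_1$ in $s$ follows by differentiating under the sum, since $\partial_s G$ only introduces a harmless $\log Q$ factor and does not change the homogeneity order; this is consistent with the identity $Z_1=Z+Z_2$ that emerges at the end of the proof of Theorem~\ref{thm:wigner-type_converge_rate_1}. It then remains to show $\big|\hat f_\phi(\kk,h)-b_\kk(s)\big|=O(h^{2K+1}|\kk|^{-M})$ for each $\kk\neq\mathbf0$, because then $\big|\sum_{\kk\neq\mathbf0}\hat f_\phi(\kk,h)-Z_1(s)\big|\le\sum_{\kk\neq\mathbf0}O(h^{2K+1}|\kk|^{-M})=O(h^{2K+1})$ by $M>2$.

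For the error I would argue term by term. For $1\le j\le 2K$, condition \eqref{eq:eta_condition} forces $\partial^{\,j}_{u_\ell}\eta$ to vanish to order $2K+1-j$ at $\mathbf0$, so $h^{\,j}(\partial^{\,j}_{u_\ell}\eta)(\uu h)=O\big(h^{2K+1}|\uu|^{2K+1-j}\big)$; for $j\ge 2K+1$ one just has $h^{\,j}=O(h^{2K+1})$. Pairing these with $\partial^{M-j}_{u_\ell}G=O(|\uu|^{-s-(M-j)})$, with the factor $|k_\ell|^{-M}\le C|\kk|^{-M}$, and checking that the resulting $\uu$-integrals (over $\{|\uu|\le a/h\}$) converge, each such term is $O(h^{2K+1}|\kk|^{-M})$ once $M$ is taken large enough. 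The remaining $j=0$ error, $(2\pi i k_\ell)^{-M}\big(\int_{|\uu|\le a/h}(\eta(\uu h)-1)\,\partial^{M}_{u_\ell}G\,e^{-2\pi i\kk\cdot\uu}\,\d\uu-\int_{|\uu|>a/h}\partial^{M}_{u_\ell}G\,e^{-2\pi i\kk\cdot\uu}\,\d\uu\big)$, I would bound using $|\eta(\uu h)-1|\le C\min\{1,|\uu h|^{2K+1}\}$ together with the homogeneity of $\partial^{M}G$, splitting the $\uu$-integral at $|\uu|=1/h$; this gives $O(h^{2K+1}|\kk|^{-M})$ provided $M>2K+3-s$, which is exactly what the hypothesis $\beta+s\ge 2K+3$ affords, since one is allowed to integrate by parts up to $\beta$ times.

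The hard part will be this last $j=0$ estimate: the factor $\eta(\uu h)-1$ is genuinely small, $O(|\uu h|^{2K+1})$, only in the region near $\mathbf0$, and is merely $O(1)$ on $|\uu|\gtrsim1/h$ (which carries most of the mass), while $\partial^{M}_{u_\ell}G$ decays only polynomially; matching the decay of $\partial^{M}G$, the smallness of $\eta(\uu h)-1$, and the loss from the $1/h$ cutoff is a delicate count, and it is here that the full smoothness budget $\beta+s\ge2K+3$ is consumed — the borderline case in particular needs extra care (e.g.\ replacing $\eta(\uu h)-1$ by its degree-$(2K+1)$ Taylor polynomial about $\mathbf0$, whose monomial terms already carry explicit powers $h^{|\gamma|}$ with $|\gamma|\ge2K+1$, and then integrating by parts a few additional times on the smaller remainder to recover both the $h$- and the $\kk$-decay). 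Once all $\kk$-terms are controlled in this way, the final assembly of $\big|\sum_{\kk\neq\mathbf0}\hat f_\phi(\kk,h)-Z_1(s)\big|=O(h^{2K+1})$ is immediate.
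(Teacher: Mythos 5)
Your proposal follows essentially the same route as the paper's proof: integrate by parts $M\approx\beta$ times in the direction of the dominant component of $\kk$, extract the $h$-independent term obtained by replacing $\eta(\uu h)$ with $1$ (the paper's $W(2\pi\kk)/(2\pi k)^{\beta}$, your $b_\kk(s)$), define $Z_1(s)$ as its sum over $\kk\neq\mathbf{0}$, and bound the remainder using the vanishing of the derivatives of $\eta$ at $\mathbf{0}$ up to order $2K$ together with the $O(|\uu|^{-s-\beta+j})$ decay of the derivatives of $Q^{-s/2}$. The only cosmetic caveats are that $b_\kk(s)$ should be taken as defined by your absolutely convergent derivative integral rather than identified with $\widehat{G}(\kk)$ (since $G\notin L^1$ when $s\leq 1$), and the borderline exponent count $M=2K+3-s$ you flag is handled no more carefully in the paper itself, so there is no substantive difference between the two arguments.
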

Before proving the lemma, we define the following simplified notations to be used throughout the proof. Denote the function
$$S(\uu,s) = Q(\uu)^{-\frac{s}{2}}$$
and for $\kk=(k_1,k_2)$ and $\uu=(u,v)$ define the following
$$
\begin{aligned}
k &:= |\kk|_\infty = \max\{k_1,k_2\}\\
D^j &= D^j(\kk) :=
\begin{cases}
(\partial/\partial u)^j & \text{if }k_1\geq k_2\\
(\partial/\partial v)^j & \text{if }k_1 < k_2
\end{cases}\quad \text{for any }j\in\NN
\end{aligned}
$$
We also use the shorthand notation
$$g^{(j)}(\uu) \equiv D^jg(\uu)$$
to denote the derivatives of any smooth function $g$ when the associated $\kk$ is clear from context.
\begin{proof}
Since $\eta(\uu h)\phi(\uu)S(\uu,s)$ is compactly supported in $\frac{1}{2}\leq\uu\leq\frac{a}{h}$ and at least $\beta$-times continuously differentiable, then after integration-by-parts $\beta$ times we have
$$
I(h,\kk):=\int_{|\uu|<\infty}\eta(\uu h)\phi(\uu)S(\uu,s)e^{i\kk\cdot\uu}\d\uu =  \frac{i^{\beta}}{k^{\beta}}\int_{|\uu|<\infty}\big(D^{\beta }\eta(\uu h)\phi(\uu)S(\uu,s)\big)e^{i\kk\cdot\uu}\d\uu.
$$
Define
$$
W(\kk) := i^{\beta}\int_{|\uu|<\infty}\big(D^{\beta}\,\phi(\uu)S(\uu,s)\big)e^{i\kk\cdot\uu}\d\uu
$$
then, since $\phi^{(j)}(\uu)$ is compactly supported for $j\geq1$ and $|S^{(\beta)}(\uu,s)| = O(|\uu|^{-s-\beta})$ integrable for $\beta+s\geq2K+3\geq5$, we have
$$
|W(\kk)|\leq \int_{|\uu|<\infty}|\phi(\uu)S^{(\beta)}(\uu,s)|\,\d\uu + \sum_{j = 1}^{\beta} d_{\beta,j}\int_{|\uu|<\infty}|\phi^{(j)}(\uu)S^{(\beta-j)}(\uu,s)|\d\uu \leq C
$$
for some constant $C$, where $d_{\beta,j}$ are binomial coefficients. So $W(\kk)$ is independent of $h$ and uniformly bounded with the appropriate $\kk$ and $D^j$ correspondence as defined before the proof.

Next, define $c(\uu):=\phi(\uu)\eta(\uu h)$ and consider
\begin{equation}
\begin{aligned}
I(h,\kk) - \frac{W(\kk)}{k^{\beta}} =&  \frac{i^{\beta}}{k^{\beta}}\int_{|\uu|<\infty}\big(D^{\beta }(c(\uu)-\phi(\uu))S(\uu,s)\big)e^{i\kk\cdot\uu}\d\uu\\
=& \frac{i^{\beta}}{k^{\beta}}\sum_{j = 1}^{\beta} d_{\beta,j}\int_{|\uu|<\infty}\big(c^{(j)}(\uu)-\phi^{(j)}(\uu)\big)S^{(\beta-j)}(\uu,s)e^{i\kk\cdot\uu}\d\uu\\
\end{aligned}
\label{eq:I-W;lemma:long}
\end{equation}
Using the fact that $\phi^{(j)}(\uu) \equiv 0$ for $j > 0$ and $|\uu| > 1$, we have
$$
c^{(j)}(\uu) =
\begin{cases}
\sum_{\ell=0}^j d_{j,\ell}\,\phi^{(j-\ell)}(\uu)\,\eta^{(\ell)}(\uu h)\,h^\ell, & \frac{1}{2}\leq|\uu|\leq1\\
\eta^{(j)}(\uu h)\,h^j, &1<|\uu|\leq \frac{a}{h}
\end{cases}
$$
therefore, using the fact that $\phi(\uu)\equiv0$ for $|\uu|\leq \frac{1}{2}$, $\phi(\uu)\equiv1$ for $|\uu|\geq 1$ and $\eta(\uu h)\equiv0$ for $|\uu|\geq\frac{a}{L}$, the integrals in \eqref{eq:I-W;lemma:long} can be rewritten as follows
$$
\begin{aligned}
&\int_{|\uu|<\infty}\big(c^{(j)}(\uu)-\phi^{(j)}(\uu)\big)S^{(\beta-j)}(\uu,s)e^{i\kk\cdot\uu}\d\uu\\
=& \int_{\frac{1}{2}\leq|\uu|\leq1}\bigg(\sum_{\ell=0}^j d_{j,\ell}\,\phi^{(j-\ell)}(\uu)\,\eta^{(\ell)}(\uu h)\,h^\ell-\phi^{(j)}(\uu)\bigg)S^{(\beta-j)}(\uu,s)e^{i\kk\cdot\uu}\d\uu\\
  &+ \int_{1<|\uu|\leq \infty}\big(\eta^{(j)}(\uu h)\,h^j-\delta_{0,j}\big)S^{(\beta-j)}(\uu,s)e^{i\kk\cdot\uu}\d\uu\\
=& \sum_{\ell=0}^jd_{j,\ell}\,h^\ell\, \int_{\frac{1}{2}\leq|\uu|\leq1}\phi^{(j-\ell)}(\uu)\,\big(\eta^{(\ell)}(\uu h)-\delta_{0,\ell}\big)\,S^{(\beta-j)}(\uu,s)e^{i\kk\cdot\uu}\d\uu\\
  &+ h^j\int_{1<|\uu|\leq \frac{a}{h}}\big(\eta^{(j)}(\uu h)-\delta_{0,j}\big)S^{(\beta-j)}(\uu,s)e^{i\kk\cdot\uu}\d\uu\\
  &-\delta_{0,j}\int_{\frac{a}{h}<|\uu|<\infty}S^{(\beta)}(\uu,s)e^{i\kk\cdot\uu}\d\uu\\
:=& \sum_{\ell=0}^jd_{j,\ell}\, I_1^{(\ell)} + I_2 + I_3
\end{aligned}
$$
where $\delta_{i,j}$ is the Kronecker delta. Note that $\eta^{(\ell)}(\mathbf{0}) = 0$ for $\ell\leq 2K$, so by Taylor's theorem, and using the fact $\eta(\mathbf{0})=1$, we have
$$
|\eta^{(\ell)}(\uu h)-\delta_{0,\ell}| =  |\eta^{(\ell)}(\uu h)-\eta^{(\ell)}(\mathbf{0})| \leq C'\,|\uu h|^{2K+1-\ell}
$$
for some constant $C'$. Then we can estimate each of $I_1^{(\ell)},I_2,I_3$ as follows.
\begin{itemize}
\item For the first sum of integrals
$$
|I_1^{(\ell)}| \leq h^\ell\, (C' \,h^{2K+1-\ell})\int_{\frac{1}{2}\leq|\uu|\leq1}|\uu|^{2K+1-\ell}\,|S^{(\beta-j)}(\uu,s)|\d\uu \leq  C\,h^{2K+1}
$$
so
$$
\left| \sum_{\ell=0}^jd_{j,\ell}I_1^{(\ell)}\right|\leq \sum_{\ell=0}^jd_{j,\ell}|I_1^{(\ell)}|\leq C\,h^{2K+1}
$$
\item For the second integral
$$
\begin{aligned}
|I_2| \leq& h^j\,(C'\,h^{2K+1-j})\int_{1<|\uu|\leq \frac{a}{h}}|\uu|^{(2K+1-j)+(-s-\beta+j)}\d\uu\\
 =& C'\,h^{2K+1}\int_{1<|\uu|\leq \frac{a}{h}}|\uu|^{2K+1-s-\beta}\d\uu\\
 \leq& C (h^{2K+1} + h^{s+\beta-2})
\end{aligned}
$$
\item For the third integral
$$
|I_3| \leq \int_{\frac{a}{h}<|\uu|<\infty}|\uu|^{-s-\beta}\d\uu\leq C\,h^{s+\beta-2}
$$
\end{itemize}
Therefore, substituting all above estimates back into \eqref{eq:I-W;lemma:long} yields
$$
\begin{aligned}
\left|I(h,\kk) - \frac{W(\kk)}{k^{\beta}}\right| \leq& \frac{1}{k^{\beta}}\sum_{j = 1}^{\beta} d_{\beta,j}\left(\sum_{\ell=0}^jd_{j,\ell}|I_1^{(\ell)}|+|I_2|+|I_3|\right)\\
\leq&\, \frac{C}{k^\beta}(h^{2K+1}+h^{\beta+s-2})
\end{aligned}
$$
By assumption, $\beta+s \geq 2K+3$ and $s\leq1$, so substitute $\kk\mapsto 2\pi\kk$ in the above formula yields
$$
\left|\hat{f}_\phi(\kk,h) -  \frac{W(2\pi\kk)}{(2\pi k)^{\beta}}\right| \leq \frac{C}{k^{\beta}}\,h^{2K+1}\leq \frac{C}{k^{2(K+1)}}\,h^{2K+1}.
$$
Therefore, if we define
$$
Z_1(s):=\sum_{|\kk|\neq0}\frac{W(2\pi\kk)}{(2\pi k)^{\beta}}
$$
which converges since $\beta\geq 2K+3-s \geq4$ given $s\leq1$ and $K\geq1$, then
$$\left|\sum_{|\kk|\neq0} \hat{f}_\phi(\kk,h) - Z_1(s)\right| \leq C'\,h^{2K+1}\sum_{|\kk|\neq0}\frac{1}{k^{2(K+1)}}\leq C h^{2K+1}$$
\qed\end{proof}

\bibliographystyle{spmpsci}
\bibliography{bobi_quadr}

\end{document}